\documentclass{mcom-l}
\usepackage{amssymb}
\usepackage{mathtools}
\usepackage{bbm}
\usepackage{enumerate}
\usepackage{tikz}
\usepackage{xcolor}
\usepackage{enumitem}
\usepackage{stmaryrd}
\usepackage{subcaption}
\usepackage{caption}
\usepackage{booktabs}
\usepackage{siunitx,microtype}
\usepackage{comment}

\definecolor{mutedCyan}{RGB}{20,110,150}
\usepackage{hyperref}
\hypersetup{
    colorlinks=true,
    citecolor=mutedCyan,
    linkcolor=black,
}
\providecommand{\doi}[1]{DOI \href{https://doi.org/#1}{\begingroup\urlstyle{same}\nolinkurl{#1}\endgroup}}

\let\originalleft\left
\let\originalright\right
\renewcommand{\left}{\mathopen{}\mathclose\bgroup\originalleft}
\renewcommand{\right}{\aftergroup\egroup\originalright}

\newcommand{\ve}{\varepsilon}

\newcommand\norm[1]{\left\lVert#1\right\rVert}
\newcommand\absv[1]{\left\lvert#1\right\rvert}
\renewcommand\brack[1]{\left (#1\right)}
\newcommand\abrack[1]{\left \langle #1 \right \rangle}
\newcommand\sbrack[1]{\left [ #1 \right ]}
\newcommand\cbrack[1]{\left \{ #1 \right \}}

\newcommand{\dd}{\mathop{}\!\mathrm{d}}

\newcommand{\bbE}{\mathbb{E}}
\newcommand\E[1]{\bbE\sbrack{#1}}

\newcommand{\btd}{\mathbb{T}^d}

\newcommand{\bx}{\boldsymbol{x}}
\newcommand{\bv}{\boldsymbol{v}}

\newcommand{\bB}{\boldsymbol{B}}
\newcommand{\bj}{\boldsymbol{j}}
\newcommand{\by}{\boldsymbol{y}}
\newcommand{\bxi}{\boldsymbol{\xi}}
\newcommand{\bn}{\boldsymbol{n}}

\newcommand{\boldf}{\boldsymbol{f}}
\newcommand{\bvphi}{\boldsymbol{\varphi}}
\newcommand{\bX}{\boldsymbol{X}}
\newcommand{\bu}{\boldsymbol{u}}

\newcommand{\Th}{\mathcal{T}_h}
\newcommand{\Fk}{\mathcal{F}_K}

\newcommand{\Fh}{\mathcal{F}_h}

\newcommand{\rhomin}{\rho_{\mathrm{min}}}
\newcommand{\rhomax}{\rho_{\mathrm{max}}}

\newcommand{\scalarpolys}{P_p(K)}
\newcommand{\vectorpolys}{\boldsymbol{P}_p^d(K)}
\newcommand{\scalardG}{V_p}
\newcommand{\vectordG}{\boldsymbol{V}_p^d}

\newcommand{\avg}[1]{\left\{\!\left\{#1\right\}\!\right\}}
\newcommand{\jump}[1]{\left\llbracket #1 \right\rrbracket}

\newcommand{\locallift}{\boldsymbol{\mathcal{L}}_F}
\newcommand{\globallift}{\boldsymbol{\mathcal{L}}_h}

\newcommand{\dgrad}{G_h}

\newcommand{\mfl}{\overline{\rho}}
\newcommand{\sdmfl}{\overline{\rho}_h}

\newcommand{\ritz}{R_h}

\theoremstyle{plain}
\newtheorem{theorem}{Theorem}[section]
\newtheorem{proposition}[theorem]{Proposition}
\newtheorem{lemma}[theorem]{Lemma}
\newtheorem{corollary}[theorem]{Corollary}

\theoremstyle{definition}
\newtheorem{definition}[theorem]{Definition}

\newtheorem{assumptionalt}{Assumption}
\newenvironment{assumptionp}[1]{
  \renewcommand\theassumptionalt{#1}
  \assumptionalt
}{\endassumptionalt}

\newtheorem*{conditionalassumption}{Conditional Assumption}

\theoremstyle{remark}
\newtheorem{remark}[theorem]{Remark}

\numberwithin{equation}{section}

\begin{document}

\title[Discontinuous Galerkin for Dean--Kawasaki]{A discontinuous Galerkin approximation of the Dean--Kawasaki equation}
\author{Kamran Arora}
\address{Kamran Arora, \newline Department of Mathematics, University of Bath BA2 7AY, UK}
\email{ka679@bath.ac.uk}
\author{Federico Cornalba}
\address{Federico Cornalba, \newline Department of Mathematics, University of Bath BA2 7AY, UK}
\email{fc402@bath.ac.uk}
\author{Tony Shardlow}
\address{Tony Shardlow, \newline Department of Mathematics, University of Bath BA2 7AY, UK}
\email{t.shardlow@bath.ac.uk}

\subjclass[2020]{Primary 65C30, 65M60; Secondary 60H15, 60H35}

\date{}

\keywords{Dean--Kawasaki, discontinuous Galerkin, structure-preserving, stochastic PDE, fluctuating hydrodynamics}

\begin{abstract}
    We introduce and analyse an arbitrary order spatial discontinuous Galerkin (dG) method for the Dean--Kawasaki equation, a highly singular SPDE modelling density fluctuations of $N$ diffusing particles in the regime of large particle number $N \gg 1$. Our starting point is a general procedure for discretising multiplicative, divergence-form noise on finite element spaces whilst preserving its cross-variation structure at the discrete level; the construction is explicit, elementwise, applies to continuous and discontinuous spaces alike, and extends to general mobilities. Using it, we prove weak error estimates of order $O(h^p)$ between fluctuations of the semi-discrete scheme and those of the underlying particle system, together with a correction that is exponentially small in the scaling regime $Nh^d \gg 1$ and arises because the scheme does not preserve positivity. The resulting method is locally and globally conservative and applies on unstructured simplicial meshes. 
    Quantitative numerical experiments support the analysis, and further experiments illustrate the method beyond the scope of the theory: indicator function observables, external and interaction potentials, convection-dominated regimes and reflecting boundary conditions.
\end{abstract}

\maketitle

\section{Introduction}

The Dean--Kawasaki equation is a pivotal SPDE from the theory of fluctuating hydrodynamics proposed by Dean~\cite{dean1996langevin} and Kawasaki~\cite{kawasaki1998microscopic} as a model for density fluctuations in large but finite-sized systems of (interacting) particles undergoing diffusion. Take the simplest such system, that of $N$ standard, independent Brownian motions $\{\bB_i(t)\}_{i=1}^N$ on the torus, and let
\begin{equation}\label{eq: empirical-measure}
    \mu_t^N \coloneqq N^{-1}\sum_{i=1}^N \delta_{\bB_i(t)}
\end{equation}
denote the associated empirical measure. Applying It\^o's formula to $\abrack{\mu_t^N, \varphi}$ for smooth $\varphi$, and representing the resulting martingale as an integral against space-time white noise, leads formally to a closed equation for the density $\rho$ of $\mu^N$,
\begin{equation}\label{eq: Dean--Kawasaki}
    \partial_t \rho = \frac{1}{2} \Delta \rho + \frac{1}{\sqrt{N}}\nabla \cdot \brack{\sqrt{\rho} \bxi},
\end{equation}
where $\bxi$ is a vector-valued space-time white noise and the prefactor $N^{-1/2}$ sets the scale of the fluctuations about the mean-field limit. Mathematically, \eqref{eq: Dean--Kawasaki} is a challenging equation to study due to its highly singular nature: it is neither renormalisable by the theory of regularity structures nor can it be treated using paracontrolled distributions. The derivation just sketched is, moreover, purely formal, since $\mu_t^N$ carries no density. This turns out not to be an artefact of the derivation: in~\cite{konarovskyi2019dean} the authors show that \eqref{eq: Dean--Kawasaki} admits only trivial martingale solutions, in the sense that its solutions are precisely the empirical measures \eqref{eq: empirical-measure} it was derived from.

The practical motivation for resorting to \eqref{eq: Dean--Kawasaki} (specifically, to suitable discretisations with, say, mesh resolution $h$) is purely computational: in the physically relevant high-density regime $Nh^d \gg 1$ simulating the Dean--Kawasaki
equation costs $O(h^{-d})$, which is significantly less than the cost of direct particle simulation (which is no less than $O(N)$).
Our analysis treats the purely diffusive case, which is the setting in which the
structural difficulties of \eqref{eq: Dean--Kawasaki} are already present;
interacting systems are treated numerically in Section~\ref{sec: numerics} and
their future outlook is discussed in Section~\ref{sec: future}.

The key ingredient of the present work is a general procedure for discretising
multiplicative, divergence-form noise of the form $\nabla \cdot (\sqrt{m(\rho)}\bxi)$
on finite element spaces whilst preserving its cross-variation structure at the
discrete level. Here $m$ denotes the mobility, with $m(\rho) = \rho$ in
the Dean--Kawasaki case; other choices of interest include
$m(\rho) = \rho(1-\rho)$, which arises for the simple symmetric exclusion
process~\cite{dirr2026conservative}, and the general class treated in~\cite{fehrman2024well}. The construction is explicit, elementwise, of arbitrary
order, and applies equally to continuous and discontinuous finite element
spaces. It allows us to
\begin{enumerate}[label=(\alph*)]
    \item extend the continuous finite element analysis of~\cite{cornalba2023dean}
    beyond the piecewise linear case treated there to arbitrary polynomial order, and
    \item introduce and analyse an arbitrary order spatial discontinuous Galerkin
    (dG) method for the Dean--Kawasaki equation (see \eqref{eq: dg-dk}).
\end{enumerate}

The motivation for a dG method stems from the stochastic conservation law
structure of the Dean--Kawasaki equation, for which a dG discretisation with
appropriately chosen fluxes is locally conservative. To the best of our
knowledge, this is the first analysis of a dG method for
\eqref{eq: Dean--Kawasaki}. Two further features of the finite element
framework are exploited in Section~\ref{sec: numerics}: unstructured meshes let
us treat non-trivial geometries and impose zero-flux boundary conditions for
reflecting particle systems, whereas existing analyses of
\eqref{eq: Dean--Kawasaki} are predominantly confined to uniform grids on the torus; and
upwind fluxes provide stability in the convection-dominated regimes arising when
particles are subject to external 
or interaction potentials. 

\subsection{Our contributions}

The main contributions of this work are as follows:
\begin{itemize}
    \item We provide an explicit method for constructing an arbitrary order, continuous or discontinuous Galerkin approximation to the highly irregular noise that preserves the correct cross-variation structure at the discrete level (see Propositions~\ref{prop: construct-diffusion-coefficients} and~\ref{prop: existence-of-noise}). Additionally, our construction is shown to be computationally efficient and applicable to general divergence-form noise (see Remarks~\ref{rem: noise-cost} and~\ref{rem: noise-general-mobility}). 
    \item Building upon the techniques in~\cite{cornalba2023dean, cornalba2026density}, we prove weak error estimates for fluctuations between a purely diffusive particle system and our dG approximation of the associated Dean--Kawasaki equation (see Theorem~\ref{thm: main-theorem}). Our rate is of order $O(h^p)$ for mesh resolution $h$ and polynomial order $p \geq 1$ plus an exponentially small correction associated with the numerical scheme going negative. 
    \item We perform quantitative numerical experiments to verify our weak error estimates and qualitative experiments that apply dG methods to settings not covered by the present theory. This includes indicator-function observables, particle systems with external and interaction potentials, and domains with reflecting boundary conditions (see Section~\ref{sec: numerics}). 
    We use the Python package Firedrake~\cite{FiredrakeUserManual} and the code is available in the open-source repository \url{https://github.com/kamran-arora/Dean-Kawasaki-DG}.
\end{itemize}

Two limitations should be stated at the outset. Our analysis is semi-discrete:
we discretise in space only, since this is where the structural difficulties
reside. Second, the absence of a
discrete maximum principle for the interior penalty method forces an upper
restriction on the mesh size (Assumption~\ref{ass: dG2}); both are discussed in
Section~\ref{sec: main-result} and revisited in Section~\ref{sec: future}.

\subsection{Related literature}

As mentioned previously, the only solutions to the Dean--Kawasaki equation are the empirical measures of the underlying particle system~\cite{konarovskyi2019dean}. Analogous results have since been established for a much wider class of Dean--Kawasaki-type equations in~\cite{konarovskyi2020dean, konarovskyi2024dean, schiavo2024massive, muller2025well}.
In light of this result, research has shifted to studying regularised variants of the Dean--Kawasaki equation that admit non-trivial solutions yet remain a faithful approximation of the underlying particle system. Of particular interest (and most closely related to the present work) are the results obtained in~\cite{cornalba2023dean, cornalba2026density}, where the authors show that arbitrary order finite difference discretisations and first order (i.e., linear) finite element discretisations of the Dean--Kawasaki equation provide accurate descriptions of density fluctuations of particle systems when measured in suitably weak (but highly relevant) metrics. Their estimates include a numerical error and error due to the negative part of the solution, the latter being of exponentially small size in the scaling regime $Nh^d \gg 1$ where $h$ is the mesh resolution. A similar result is shown to hold for piecewise linear, continuous finite elements. This framework is extended in~\cite{cornalba2026density} to cross-diffusing, weakly-interacting particle systems with sufficiently regular interaction potentials while~\cite{cornalba2025multilevel} provides rigorous justification for the use of multilevel Monte Carlo methods.

A complementary approach is taken in~\cite{djurdjevac2024weak,djurdjevac2026weak}, where weak-error estimates are proved between particle systems and analytically regularised Dean--Kawasaki equations. Rather than relying on a spatial discretisation, the square-root coefficient is smoothed, and the noise is truncated through a frequency cutoff. The resulting model preserves positivity and mass, although the convergence rate is limited by an $N$-dependent bound that deteriorates with dimension. The analysis is infinite-dimensional and relies on entirely different techniques so can be seen as dual to the finite-dimensional framework considered here. More recently, related techniques have been used to study the role of positivity preservation in approximations of the Dean--Kawasaki equation~\cite{damnjanovic2026role}.

Several other regularised models have been studied. In~\cite{cornalba2019regularized, cornalba2020weakly, cornalba2021well}, a regularised Dean--Kawasaki model is derived for inertial particle systems by replacing point particles with particles of a small finite width. The resulting SPDE admits high probability existence and positivity results. Later, the authors propose and analyse a discontinuous Galerkin method for their regularised SPDE~\cite{cornalba2023regularised}. Another line of works regularise the Dean--Kawasaki equation by introducing a spatial correlation to the noise. Well-posedness is established in~\cite{fehrman2024well} using stochastic kinetic solutions. Notably, they can treat the full square-root nonlinearity. Later, the framework was extended to bounded domains in~\cite{popat2025well} and to kinetic-type equations in~\cite{hao2025kinetic}.

The Dean--Kawasaki equation is also finding widespread applications in the physical sciences (see, for example, the review~\cite{illien2025dean}). As such, there are a growing number of works investigating the numerical approximation of the Dean--Kawasaki equation and related SPDEs. We have the aforementioned finite-difference results,~\cite{cornalba2023dean, cornalba2025multilevel, cornalba2026density}, but also finite element methods in~\cite{bavnas2024numerical, martinez2024finite}, and the finite-volume method in~\cite{kim2017stochastic, bell2026surface} whilst~\cite{djurdjevac2025hybrid} introduces a hybrid particle-SPDE framework. A discontinuous Galerkin method for stochastic conservation laws with a simpler non-divergence-form noise is studied in~\cite{li2020discontinuous}. Related numerical work uses the Dean--Kawasaki equation as a coarse-grained model to study features of interacting particle systems, for example,~\cite{delfau2016pattern, wehlitz2025approximating, jin2026field}.

\subsection{Structure of the paper}

The remainder of the paper is structured as follows. In Section~\ref{sec: notation}, we fix our notation then, in Section~\ref{sec: numerical-scheme}, we derive our numerical scheme (see \eqref{eq: dg-dk}). In Section~\ref{sec: noise-construc}, we rigorously construct the noise term (Propositions~\ref{prop: construct-diffusion-coefficients} and~\ref{prop: existence-of-noise}). We also state a weak existence result for the semi-discrete model (Theorem~\ref{thm: weak-existence}), whose proof is provided in Appendix~\ref{app: well-posed}, and detail some conservation properties (Propositions~\ref{prop: global-conservation} and~\ref{prop: local-conservation}). Section~\ref{sec: main-result} contains our weak error estimate (Theorem~\ref{thm: main-theorem}) and analysis with technical proofs given in Appendix~\ref{app: proofs}. 
We then present a selection of numerical experiments in Section~\ref{sec: numerics}, and conclude in Section~\ref{sec: future} with a discussion of extensions and directions for future work. Appendix~\ref{sec: recursive} contains auxiliary results used for the analysis and Appendix~\ref{sec: heat-estimates} provides semi-discrete error estimates for an adjoint heat equation. In Appendix~\ref{app: construction-of-initial-data}, we outline a method of constructing initial data satisfying Assumption~\ref{ass: dG1}.
 
\section{Notation}\label{sec: notation}

We work on the torus $\btd$ for $d \in \{1, 2, 3\}$. Our existence result is probabilistically weak (Theorem~\ref{thm: weak-existence}) so any reference to a probability space is to the one upon which the solution being considered is defined. We use the notation $(\Omega, \mathcal{G}, \mathbb{P})$ with filtration $(\mathcal{G}_t)_{t \geq 0}$. Expectation with respect to $\mathbb{P}$ is denoted by $\mathbb{E}$. 

\subsubsection*{Mesh}

We partition the domain using a sequence of shape-regular, quasi-uniform, affine simplicial meshes $\Th$ such that
$$
\btd = \bigcup_{K \in \Th} K.
$$
The index $h$ denotes the refinement level of the mesh. We define $h \coloneqq \max_{K \in \Th} h_K$ where $h_K \coloneqq \mathrm{diam}(K)$. The quasi-uniformity assumption implies that there exists a constant $C > 0$, independent of $h$, such that
\begin{equation}\label{eq: quasi-uniform}
h_K \geq C h, \quad \forall K \in \Th.
\end{equation}
Write $\Fk$ for the collection of facets of element $K$ and $\Fh$ for the collection of all facets. Since we work on the torus, boundary faces are identified periodically and treated as interior faces. For each facet $F \in \Fh$, we define $h_F > 0$ by $h_F \coloneqq \mathrm{diam}(F)$. Moreover, we assume that the facets do not degenerate with respect to $h_K$, that is, there exists a constant $C > 0$, independent of $h$, such that
$$
h_K \leq C h_F, \quad \forall K \in \Th, ~ \forall F \in \Fk.
$$
In particular, it follows that
$$
c h_K \leq h_F \leq C h_K, \quad \forall K \in \Th, ~ \forall F \in \Fk,
$$
for constants $c, C>0$ both independent of $h$.

\subsubsection*{Function spaces}

Let $\scalarpolys$ (respectively $\vectorpolys$) denote the space of polynomials (respectively, $\mathbb{R}^d$-valued polynomials) of degree $p \in \mathbb{N}_0$ over an element $K$. Our piecewise polynomial dG spaces are then defined as 
\begin{align*}
    \scalardG &\coloneqq \{u \in L^2(\btd) : u|_K \in \scalarpolys ~\text{for all}~ K \in \Th\}, \\
    \vectordG &\coloneqq \{\bv \in L^2(\btd) : \bv|_K \in \vectorpolys ~\text{for all}~ K \in \Th\}.
\end{align*}
We define the space $C^\beta \coloneqq C^\beta(\btd)$ and the Sobolev spaces $W^{k, p}(\btd)$ with $H^k(\btd)\coloneqq W^{k, 2}(\btd)$. We drop $\btd$ when clear. We also require the broken Sobolev spaces $\mathcal{W}^{k,p}(\Th)$ defined via
$$
\mathcal{W}^{k,p}(\Th) \coloneqq \{u \in L^2(\btd) : u|_K \in W^{k,p} ~\text{for all}~ K \in \Th\},
$$
with $\mathcal{H}^k \coloneqq \mathcal{W}^{k, 2}$.

\subsubsection*{Nodal interpolation operator}

For each element $K \in \Th$, let $\{\bx_i^K\}_{i=1}^{n_p} \subset \overline{K}$ be a set of nodal points. We define the (global) nodal interpolation operator $\mathcal{I}_h: \mathcal{H}^{p+1}(\Th) \rightarrow V_p$ via
\begin{equation}
    (\mathcal{I}_hu)|_K(\bx_i^K) = u(\bx_i^K), \quad \forall 1 \leq i \leq n_p, \quad \forall K \in \Th,
\end{equation}
for any $u \in \mathcal{H}^{p+1}(\Th)$. That is, $\mathcal{I}_h u$ is the function that when restricted to each element agrees with $u$ at each nodal point.

\subsubsection*{Average and jump operators}

Given an element $K \in \Th$, we define $\bn_K$ as its outward unit normal. Consider a facet $\Fh \ni F \coloneqq \partial K_\ell \cap \partial K_r$ for any two neighbouring elements $K_\ell, K_r$. We then have $\bn_{K_\ell} = - \bn_{K_r}$. We can then define the average of a function $\bv \in \mathcal{W}^{1,1}(\Th; \mathbb{R}^m)$ on $F$ by
$$
\avg{\bv} \coloneqq \frac{1}{2}\brack{\bv|_{K_\ell}+ \bv|_{K_r}},
$$
where $\bv|_{K_\ell}$ (respectively $\bv|_{K_r}$) denotes the trace of $\bv$ taken from the interior of $K_\ell$ (respectively $K_r$). We also define the jump of a scalar-valued function $v \in \mathcal{W}^{1, 1}(\Th; \mathbb{R})$ on $F$ by
$$
\jump{v} \coloneqq v|_{K_\ell}\bn_{K_\ell} + v|_{K_r}\bn_{K_r}.
$$
Similarly, define the jump of a vector-valued function $\bv \in \mathcal{W}^{1, 1}(\Th; \mathbb{R}^m)$ on $F$ by
$$
\jump{\bv} \coloneqq \bv|_{K_\ell} \cdot \bn_{K_\ell} + \bv|_{K_r} \cdot \bn_{K_r}.
$$
Note the jump is vector-valued for scalar inputs and scalar-valued for vector inputs. Moreover, it is independent of the order in which the two elements are labelled.

\subsubsection*{Lifting operators}

For $F \in \Fh$ and $\ell \in \mathbb{N}_0$, we define the local lifting operator $\boldsymbol{\mathcal{L}}_F^\ell: L^2(F; \mathbb{R}^d) \rightarrow \boldsymbol{V}_\ell^d$ as follows: for all $\bu \in L^2(F; \mathbb{R}^d)$, let $\boldsymbol{\mathcal{L}}_F^\ell(\bu)$ be the function defined by
\begin{equation}\label{eq: local-lift}
\int_{\btd} \boldsymbol{\mathcal{L}}_F^\ell(\bu) \cdot \bv_h \dd \bx \coloneqq \int_F \avg{\bv_h} \cdot \bu \dd S, \quad \forall \bv_h \in \boldsymbol{V}_\ell^d.
\end{equation}
For a function $u \in \mathcal{H}^1(\Th)$, we define the global lift of its jumps via
\begin{equation}\label{eq: global-lift}
\globallift^\ell(\jump{u}) \coloneqq \sum_{F \in \Fh} \locallift^\ell(\jump{u}).
\end{equation}

\subsubsection*{Broken and discrete gradients}
Let $u \in \mathcal{H}^1(\Th)$. We define the broken gradient $\nabla_h: \mathcal{H}^1(\Th) \rightarrow L^2(\btd; \mathbb{R}^d)$ by
$$
(\nabla_h u)|_K \coloneqq \nabla (u|_K), \quad \forall K \in \Th.
$$
We also define the discrete gradient $\dgrad^\ell: \mathcal{H}^1(\Th) \rightarrow L^2(\btd; \mathbb{R}^d)$ via
\begin{equation}\label{eq: discrete-gradient}
\dgrad^\ell u \coloneqq \nabla_h u - \globallift^\ell(\jump{u}).
\end{equation}
Throughout this work, unless stated otherwise, we make the choice $\ell = p-1$ for the discrete gradient and will write $\dgrad \coloneqq \dgrad^{p-1}$, $\locallift \coloneqq \locallift^{p-1}$ and $\globallift \coloneqq \globallift^{p-1}$ for the remainder of the paper.

\begin{remark}[Intuition for $\dgrad$]
    When taking the gradient of a broken function in the distributional sense, one must account for facet contributions which are proportional to the jump. Indeed, for a broken function $v_h \in \mathcal{H}^1(\Th)$, we have
    $$
    \nabla v_h = \nabla_h v_h - \sum_{F \in \Fh} \jump{v_h} \delta_F,
    $$
    in the distributional sense where $\delta_F$ is the surface Dirac measure. It is convenient to represent this in a single $L^2$ vector field; hence, we lift the facet contributions into the interior. The resulting object is precisely the discrete gradient $\dgrad$.
\end{remark}

\subsubsection*{Positive and negative part}

For a function $v_h \in V_p$, define the positive part $v_h^+ \coloneqq \max\{v_h, 0\}$ and the negative part $v_h^- \coloneqq -\min\{v_h, 0\}$ where the minima/maxima are taken elementwise. 

\section{The numerical scheme}\label{sec: numerical-scheme}

\subsection{Variational formulation of the noise}\label{subsec: discretisation-of-noise}

Formally testing the Dean--Kawasaki noise with a smooth test function $v$ and integrating by parts gives
$$
\int_{\btd} \nabla \cdot (\sqrt{\rho} \bxi) v \dd \bx = - \int_{\btd} \sqrt{\rho} \bxi \cdot \nabla v \dd \bx = - \sum_{k \geq 1} \dot{\beta}_k \int_{\btd} \sqrt{\rho} \boldsymbol{e}_k \cdot \nabla v \dd \bx.
$$
The final equality follows from the formal representation $\bxi = \sum_{k \geq 1} \boldsymbol{e}_k \dot{\beta}_k$ where $(\boldsymbol{e}_k)_{k \geq 1}$ is an orthonormal basis of $L^2(\btd; \mathbb{R}^d)$ and $(\beta_k)_{k \geq 1}$ are i.i.d.\ standard Brownian motions. For two test functions $v_1, v_2$, one can then use Parseval's identity to compute the cross-variation of the noise:
\begin{multline*}
    \dd \abrack{\sum_{k \geq 1} \dot{\beta}_k \int_{\btd} \sqrt{\rho} \boldsymbol{e}_k \cdot \nabla v_1 \dd \bx, \sum_{\ell \geq 1} \dot{\beta}_\ell \int_{\btd} \sqrt{\rho} \boldsymbol{e}_\ell \cdot \nabla v_2 \dd \bx}, \\
    = \sum_{k \geq 1}(\sqrt{\rho}\boldsymbol{e}_k, \nabla v_1)(\sqrt{\rho}\boldsymbol{e}_k, \nabla v_2) \dd t = (\rho, \nabla v_1 \cdot \nabla v_2) \dd t.
\end{multline*}

Motivated by this observation, we wish to design a numerical scheme that preserves this cross-variation structure at the semi-discrete level. In the dG setting, test functions can be discontinuous; therefore, the appropriate notion of gradient is the discrete (or distributional) gradient defined in \eqref{eq: discrete-gradient}. This combines the classical gradient defined on the interior of each element with a jump contribution on the facets to account for discontinuities. Given this, a natural discretisation of the noise is to seek a finite-dimensional martingale $\mathcal{W}(\rho_h^+(t), v_h)$ with the following cross-variation structure:
\begin{equation}\label{eq: crossvar-target}
\dd \abrack{\mathcal{W}(\rho_h^+(\cdot), v_{1, h}), \mathcal{W}(\rho_h^+(\cdot), v_{2, h})}_t = (\rho_h^+(t), \dgrad v_{1, h} \cdot \dgrad v_{2, h}) \dd t,
\end{equation}
for all $v_{1, h}, v_{2, h} \in V_p$. We impose \eqref{eq: crossvar-target} on the discrete space $V_p$ rather than on all of $\mathcal{H}^1(\Th)$ since the construction in Proposition~\ref{prop: construct-diffusion-coefficients} relies on $\dgrad v_h$ lying in the finite-dimensional space $\boldsymbol{V}_{p-1}^d$; this holds for $v_h \in V_p$, but fails for a general element of $\mathcal{H}^1(\Th)$. Every test function to which we apply \eqref{eq: crossvar-target} belongs to $V_p$, so this costs us nothing.
\begin{remark}
    We have used the positive part of the solution $\rho_h^+$ to account for the fact that our scheme does not necessarily preserve positivity. This regularisation is needed in Proposition~\ref{prop: construct-diffusion-coefficients} when we construct an explicit representation of the noise in our finite element space.
\end{remark}
In fact, we can further justify that this is the correct discretisation by the following formal argument: consider the (formal) pure noise Dean--Kawasaki equation and write it as a conservation law. That is,
\begin{align}\label{purenoise_DK}
\partial_t \rho - \nabla \cdot \boldsymbol{J} = 0, \quad \boldsymbol{J} \coloneqq N^{-1/2}\sqrt{\rho}\,\bxi.
\end{align}
We focus on the term $\nabla \cdot \boldsymbol{J}$. Multiplying by $v \in H^1$, integrating over an element $K \in \Th$, integrating by parts and then summing over all elements gives
$$
-\sum_{K \in \Th}\int_K (\nabla \cdot \boldsymbol{J}) v \dd \bx = \sum_{K \in \Th} \int_K \boldsymbol{J} \cdot \nabla v \dd \bx - \sum_{K \in \Th} \int_{\partial K} (\boldsymbol{J} \cdot \bn_K) v \dd S.
$$
We now project into our finite-dimensional dG spaces. In particular, we seek $\boldsymbol{J}_h \in \vectordG$ such that 
$$
-\sum_{K \in \Th}\int_K (\nabla \cdot \boldsymbol{J}_h) v_h \dd \bx = \sum_{K \in \Th} \int_K \boldsymbol{J}_h \cdot \nabla v_h \dd \bx - \sum_{K \in \Th} \int_{\partial K} (\widehat{\boldsymbol{J}_h} \cdot \bn_K) v_h \dd S,
$$
holds for all $v_h \in \scalardG$. Since $\boldsymbol{J}_h$ is undefined on element boundaries, we have introduced the numerical flux $\widehat{\boldsymbol{J}_h}$. We rewrite the boundary integral over facets and then choose $\widehat{\boldsymbol{J}_h} = \avg{\boldsymbol{J}_h}$ to get
$$
-\sum_{K \in \Th}\int_K (\nabla \cdot \boldsymbol{J}_h) v_h \dd \bx = \int_{\btd} \boldsymbol{J}_h \cdot \nabla_h v_h \dd \bx - \sum_{F \in \Fh} \int_{F} \avg{\boldsymbol{J}_h} \cdot \jump{v_h} \dd S.
$$
We now recognise that the final term can be rewritten using the lift operator \eqref{eq: global-lift}, and therefore the Dean--Kawasaki noise (formally) acts on discontinuous test functions via
$$
- \sum_{K \in \Th}\int_K (\nabla \cdot \boldsymbol{J}_h) v_h \dd \bx = \int_{\btd} \boldsymbol{J}_h \cdot \dgrad v_h \dd \bx.
$$
We have chosen the central flux since it is symmetric and conservative. The former property is natural since there is no preferred direction for particles to jump over element interfaces. In Proposition~\ref{prop: global-conservation}, we show that the latter property ensures global conservation of mass.
\begin{remark}
    We have chosen to perform calculations on \eqref{purenoise_DK} to keep our arguments as succinct as possible: we stress that our arguments are purely formal, and are unrelated to -- and unaffected by -- the recently proved ill-posedness of \eqref{purenoise_DK} \cite{dello2025ill}.
\end{remark}

\subsection{Discretisation of the Laplacian}

So far, we have focused on the noise term since this is the primary challenge when deriving a suitable discretisation of the Dean--Kawasaki equation. To approximate the Laplacian, we use a standard scheme from the dG literature known as the symmetric interior penalty method~\cite{arnold1982interior}; see~\cite{riviere2008discontinuous, ern2021finite2} for a textbook treatment.

\begin{definition}\label{def: SIP-bilinear-form}
    Define the symmetric interior penalty bilinear form $a_h(\cdot, \cdot)$ via
    \begin{multline*}
        a_h(\rho_h, v_h) \coloneqq \frac{1}{2}(\nabla_h \rho_h, \nabla_h v_h) - \frac{1}{2}\sum_{F \in \Fh}\int_F \Big ( \jump{\rho_h} \cdot \avg{\nabla_h v_h} + \avg{\nabla_h \rho_h} \cdot \jump{v_h} \Big ) \dd S \\
        + \frac{\eta}{2} \sum_{F \in \Fh} h_F^{-1} \int_F \jump{\rho_h} \cdot \jump{v_h} \dd S, \quad \text{for any}~ \rho_h, v_h \in V_p,
    \end{multline*}
    where $\eta > 0$ is a penalty parameter.
\end{definition}

We have chosen this discretisation since it is symmetric by definition and coercive provided $\eta > 0$ is chosen sufficiently large. The size of this penalty parameter depends on the dimension, polynomial degree and shape regularity of the mesh. We also note that symmetry is a natural property since the diffusion of Brownian motions has no preferred direction.

\subsection{Definition of the numerical scheme}

Combining our discretisations of the noise and the Laplacian, we can now define our numerical scheme for \eqref{eq: Dean--Kawasaki}.

\begin{definition}[dG discretisation of the Dean--Kawasaki equation]\label{def: dk-DG}
    We consider the following discontinuous Galerkin Dean--Kawasaki model: find $\rho_h(t, \cdot) \in V_p$ for $t \in (0, T]$ such that
    \begin{multline*}\label{eq: dg-dk}
        \dd (\rho_h(t), v_h) = -\frac{1}{2}(\nabla_h \rho_h(t), \nabla_h v_h) \dd t \\
        + \frac{1}{2}\sum_{F \in \Fh}\int_F \Big ( \jump{\rho_h(t)} \cdot \avg{\nabla_h v_h} + \avg{\nabla_h \rho_h(t)} \cdot \jump{v_h}\Big ) \dd S \dd t \\
        - \frac{\eta}{2} \sum_{F \in \Fh} h_F^{-1} \int_F \jump{\rho_h(t)} \cdot \jump{v_h} \dd S \dd t + N^{-1/2}\dd \mathcal{W}(\rho_h^+(t), v_h), \tag{dG-DK}
    \end{multline*}
    holds for all $v_h \in V_p$ with initial condition $\rho_{0, h} \in V_p$, and where $\mathcal{W}(\rho_h^+(t), v_h)$ is a real-valued martingale with the cross-variation \eqref{eq: crossvar-target}.
    To make $\mathcal{W}$ explicit, we can write
    $$
    \dd \mathcal{W}(\rho_h^+(t), v_h) \coloneqq \sum_{j=1}^{n_G} (\boldsymbol{g}_j(\rho_h(t)), \dgrad v_h) \dd B_j(t),
    $$
    where the $B_j(t)$ are standard independent Brownian motions, $n_G \coloneqq \mathrm{dim}(\boldsymbol{V}_{p-1}^d)$, and the functions $\boldsymbol{g}_j$ are those constructed in Proposition~\ref{prop: construct-diffusion-coefficients} below. Intuitively, the functions $\{\boldsymbol{g}_j(\rho_h)\}_j$ mimic $\sqrt{\rho_h}$ appropriately such that we recover the desired cross-variation structure. Section~\ref{sec: noise-construc} is devoted to rigorously justifying this representation of the noise and proving a well-posedness result for \eqref{eq: dg-dk}.
\end{definition}

\begin{remark}
    Although the coefficients $\boldsymbol{g}_j(\rho_h)$ are defined for $\rho_h \in V_p$, they depend on $\rho_h$ only through its positive part $\rho_h^+$. We write $\mathcal{W}(\rho_h^+, \cdot)$ to emphasise this dependence.
\end{remark}

\section{Construction and properties of the noise and weak existence of a solution to the numerical scheme}\label{sec: noise-construc}

We prove one of the main contributions of the paper, namely, that the noise $\mathcal{W}(\rho_h^+(t), v_h)$ introduced in Definition~\ref{def: dk-DG} is a martingale with the desired cross-variation structure. We also establish the existence of probabilistically weak solutions to \eqref{eq: dg-dk} along with moment bounds.

\begin{proposition}\label{prop: construct-diffusion-coefficients}
    For any $u_h \in V_p$, there exists a collection of functions $\{\boldsymbol{g}_j(u_h)\}_j \subset \boldsymbol{V}_{p-1}^d$ such that
    $$
    \sum_{j=1}^{\mathrm{dim}(\boldsymbol{V}_{p-1}^d)} (\boldsymbol{g}_j(u_h), \dgrad v_{1, h})(\boldsymbol{g}_j(u_h), \dgrad v_{2, h}) = (u_h^+, \dgrad v_{1, h} \cdot \dgrad v_{2, h}),
    $$
    for any $v_{1, h}, v_{2, h} \in V_p$. The functions $\boldsymbol{g}_j$ are given by
    \begin{equation}\label{eq: gj-defn}
    \boldsymbol{g}_j(u_h) \coloneqq \sum_i (M^{-1}_{\mathrm{G}}R(u_h))_{ij} \boldf_i,
    \end{equation}
    where $\{\boldf_i\}_i$ is a basis for $\boldsymbol{V}_{p-1}^d$, $M_{\mathrm{G}}$ is the mass matrix on $\boldsymbol{V}_{p-1}^d$ with entries $(M_{\mathrm{G}})_{ij} = (\boldf_i, \boldf_j)$ and $R(u_h)$ is the unique, symmetric, positive semi-definite square root of the symmetric, positive semi-definite weighted mass matrix $P(u_h)$ with entries given by
    \begin{equation}\label{eq: Pt-definition}
    P(u_h)_{ij} \coloneqq \int_{\btd} u_h^+ \boldf_i \cdot \boldf_j \dd \bx.
    \end{equation}
    Moreover, the map $u_h \mapsto \boldsymbol{g}_j(u_h)$ is continuous for each $j$.
\end{proposition}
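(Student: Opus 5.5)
The identity reduces to finite-dimensional linear algebra once we observe that $\dgrad v_h \in \boldsymbol{V}_{p-1}^d$ for every $v_h \in V_p$. The broken gradient of a degree-$p$ polynomial has degree $p-1$ on each element, and the lift $\globallift = \globallift^{p-1}$ takes values in $\boldsymbol{V}_{p-1}^d$ by \eqref{eq: local-lift}. Hence we may expand
\[
\dgrad v_{k,h} = \sum_i a^{(k)}_i \boldf_i, \qquad k=1,2,
\]
with coefficient vectors $a^{(1)}, a^{(2)} \in \mathbb{R}^{n_G}$.

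First I express both sides in these coordinates. The right-hand side is
\[
(u_h^+, \dgrad v_{1,h}\cdot \dgrad v_{2,h}) = \sum_{i,l} a^{(1)}_i a^{(2)}_l \int_{\btd} u_h^+ \boldf_i\cdot\boldf_l \dd \bx = (a^{(1)})^\top P(u_h)\, a^{(2)} .
\]
Before using $R(u_h)$, I check that $P(u_h)$ is symmetric and positive semi-definite. Symmetry is clear. For semi-definiteness, note that $c^\top P c = \int_{\btd} u_h^+ \absv{\sum_i c_i\boldf_i}^2 \dd \bx \ge 0$, because $u_h^+ \ge 0$. The spectral theorem then provides the unique symmetric PSD square root $R = R(u_h)$.

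For the left-hand side, write $G = M_{\mathrm G}^{-1}R$, so that $\boldsymbol{g}_j = \sum_i G_{ij}\boldf_i$. Then
\[
(\boldsymbol{g}_j, \dgrad v_{k,h}) = \sum_{i,l} G_{ij}(M_{\mathrm G})_{il} a^{(k)}_l = (G^\top M_{\mathrm G} a^{(k)})_j = (R\, a^{(k)})_j,
\]
where the last step uses the symmetry of both $M_{\mathrm G}$ and $R$. Summing over $j$ gives $(Ra^{(1)})\cdot(Ra^{(2)}) = (a^{(1)})^\top R^2 a^{(2)} = (a^{(1)})^\top P a^{(2)}$, which matches the right-hand side. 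The invertibility of $M_{\mathrm G}$ is immediate, since it is the Gram matrix of a basis.

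For continuity, I argue by composition. The map $u_h \mapsto u_h^+$ is continuous from $V_p$, with any norm since the space is finite-dimensional, into $L^\infty$, because $\absv{a^+ - b^+}\le\absv{a-b}$ pointwise. Consequently $u_h\mapsto P(u_h)$ is continuous, as each entry is bounded by $\norm{u_h^+ - w_h^+}_{L^\infty}\norm{\boldf_i}_{L^2}\norm{\boldf_j}_{L^2}$. Next, the matrix square root is continuous on the cone of symmetric PSD matrices. One quantitative form is the bound $\norm{A^{1/2}-B^{1/2}}\le \norm{A-B}^{1/2}$ in operator norm; alternatively one can use uniform polynomial approximation of $\sqrt{\cdot}$ on $[0,\Lambda]$, where $\Lambda$ bounds the spectra nearby. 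Finally, multiplying by the fixed matrix $M_{\mathrm G}^{-1}$ and taking a linear combination of the $\boldf_i$ preserves continuity.

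The main obstacle is this continuity of the PSD square root. Near rank-deficient $P$ it is only Hölder-$\tfrac12$, not Lipschitz, and rank deficiency occurs exactly where $u_h$ is negative on whole elements. I would handle it by citing the operator-monotonicity inequality above, or by the polynomial-approximation argument, which only requires uniform spectral bounds on a neighbourhood of $P(u_h)$.
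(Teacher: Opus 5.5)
Your proposal is correct and follows the paper's proof essentially step for step. You expand $\dgrad v_{k,h}$ in the basis $\{\boldf_i\}$, identify the right-hand side as $(a^{(1)})^{\mathrm{T}}P(u_h)a^{(2)}$, and show $(\boldsymbol{g}_j(u_h),\dgrad v_{k,h}) = (R(u_h)a^{(k)})_j$ using the symmetry of $M_{\mathrm{G}}$ and $R(u_h)$. Continuity then comes from composing the Lipschitz map $u_h\mapsto u_h^+$ with the $\tfrac12$-H\"older matrix square root; your bound $\norm{A^{1/2}-B^{1/2}}\le\norm{A-B}^{1/2}$ is exactly the operator-monotonicity result the paper cites from Bhatia. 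One small imprecision in your closing remark, which does not affect the argument: a block of $P(u_h)$ is singular exactly when $u_h\le 0$ on a whole element (including $u_h\equiv 0$ there), not only when $u_h$ is negative there.
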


\begin{proof}
    The proof proceeds by factorising the object we wish to reconstruct. Define $n_G \coloneqq \mathrm{dim}(\boldsymbol{V}_{p-1}^d)$. Since $\dgrad v_{1, h}, \dgrad v_{2, h} \in \boldsymbol{V}_{p-1}^d$, there exist vectors of coefficients $\alpha, \beta$ such that
    $$
    \dgrad v_{1, h} = \sum_{j=1}^{n_G} \alpha_j \boldf_j, \quad \dgrad v_{2, h} = \sum_{j=1}^{n_G} \beta_j \boldf_j.
    $$
    Thus, we can write
    $$
    \int_{\btd} u_h^+ \dgrad v_{1, h} \cdot \dgrad v_{2, h} \dd \bx = \sum_{i, j=1}^{n_G}\alpha_i \beta_j \int_{\btd} u_h^+ \boldf_i \cdot \boldf_j \dd \bx.
    $$
    Using \eqref{eq: Pt-definition}, this can further be rewritten as
    $$
    \int_{\btd} u_h^+ \dgrad v_{1, h} \cdot \dgrad v_{2, h} \dd \bx = \alpha^{\mathrm{T}}P(u_h) \beta.
    $$
    Notice that $P(u_h)$ is symmetric positive semi-definite since $u_h^+ \geq 0$ so by~\cite[Theorem~7.2.6]{horn2012matrix}, there exists a unique, symmetric positive semi-definite square root $R(u_h) = P(u_h)^{1/2}$ such that $R(u_h)^2 = R(u_h)R^{\mathrm{T}}(u_h) = P(u_h)$. In particular, we have
    \begin{equation}\label{eq: alpha-RT-R-beta}
    \int_{\btd} u_h^+ \dgrad v_{1, h} \cdot \dgrad v_{2, h} \dd \bx = \alpha^{\mathrm{T}}R(u_h) R^{\mathrm{T}}(u_h) \beta.
    \end{equation}
    We now reconstruct this via the functions $\boldsymbol{g}_j$. Using \eqref{eq: gj-defn}, it follows that the coefficient vector of $\boldsymbol{g}_j(u_h)$ is the $j$-th column of $M_G^{-1}R(u_h)$. That is, we define $c^{(j)} \coloneqq M_G^{-1}R(u_h)e_j$, where $e_j$ is the $j$-th standard basis vector. Therefore, 
    $$
    (\boldsymbol{g}_j(u_h), \dgrad v_{1, h}) = \sum_{i, k=1}^{n_G}c_i^{(j)}\alpha_k (\boldf_i, \boldf_k) = (c^{(j)})^{\mathrm{T}}M_G \alpha.
    $$
    Inserting the definition of $c^{(j)}$, rearranging the transpose, using $(M_G^{-1})^{\mathrm{T}} = M_G^{-1}$ and noting that $R(u_h)$ is symmetric gives
    $$
    (\boldsymbol{g}_j(u_h), \dgrad v_{1, h}) = (M_G^{-1}R(u_h)e_j)^{\mathrm{T}}M_G \alpha = e_j^{\mathrm{T}}R(u_h)^{\mathrm{T}}\alpha = (R(u_h)\alpha)_j. 
    $$
    Similarly,
    $$
    (\boldsymbol{g}_j(u_h), \dgrad v_{2, h}) = (R(u_h)\beta)_j.
    $$
    Therefore, 
    \begin{align*}
    \sum_{j=1}^{n_G} (\boldsymbol{g}_j(u_h), \dgrad v_{1, h})(\boldsymbol{g}_j(u_h), \dgrad v_{2, h}) &= \sum_{j=1}^{n_G} (R(u_h)\alpha)_j (R(u_h)\beta)_j, \\
    &= (R(u_h)\alpha)^{\mathrm{T}} (R(u_h)\beta), \\
    &= \alpha^{\mathrm{T}}R(u_h)^{\mathrm{T}}R(u_h)\beta,
    \end{align*}
    which concludes the construction upon comparison with \eqref{eq: alpha-RT-R-beta}. To prove continuity, we note that the positive part map $u_h \mapsto u_h^+$ is Lipschitz and hence the map $u_h \mapsto P(u_h)$ is continuous. Moreover, by~\cite[Theorem X.1.1]{bhatia2013matrix}, the square root map is $1/2$-H\"older in the operator norm. By composition, it follows that $u_h \mapsto \boldsymbol{g}_j(u_h)$ is $1/2$-H\"older and hence continuous for each $j$.
\end{proof}

\begin{remark}[Extension to continuous Galerkin]
    Note that this construction can easily be applied in the continuous Galerkin (cG) setting by replacing $\dgrad$ with $\nabla$. The remainder of the construction is analogous since for a continuous degree-$p$ finite element space, the associated gradient space is also $\boldsymbol{V}_{p-1}^d$. In particular, this allows the cG results in~\cite{cornalba2023dean} to be extended beyond the $p=1$ case that the authors were previously restricted to.
\end{remark}

\begin{remark}[Implementation and computational cost]\label{rem: noise-cost}
    In practice, we use a matrix factorisation to compute the square root $R(\rho_h)$. Since $\boldsymbol{V}_{p-1}^d$ is discontinuous, the weighted mass matrix $P(\rho_h)$ is block-diagonal with one dense block associated to each element. In particular, its factorisation boils down to a sequence of independent factorisations of these blocks. If $m_{p, d} \coloneqq \mathrm{dim} \boldsymbol{P}_{p-1}^d(K)$ and $N_h$ denotes the number of elements in the mesh, then (typically) the total cost is $O(N_h m^3_{p, d})$. For fixed $p$ and $d$, the block size is independent of the mesh resolution hence the factorisation runs in linear time with respect to $N_h$ (and therefore the number of degrees of freedom). Moreover, the elementwise factorisations are independent so can be computed in parallel.
\end{remark}

We can now rewrite our numerical scheme \eqref{eq: dg-dk} as the following finite-dimensional SDE:
$$
\dd (\rho_h(t), v_h) + a_h(\rho_h(t), v_h) \dd t = N^{-1/2}\sum_{j=1}^{\mathrm{dim}(\boldsymbol{V}_{p-1}^d)} (\boldsymbol{g}_j(\rho_h), \dgrad v_h) \dd B_j(t),
$$
where $\{B_j(t)\}_j$ is a collection of standard, independent Brownian motions. We then have the following result that asserts the existence of a probabilistically weak solution to the above system.

\begin{theorem}[Weak existence and moment bounds]\label{thm: weak-existence}
    Let $\rho_{0, h} \in V_p$ be deterministic and fix $T>0$. There exists a filtered probability space $(\Omega, \mathcal{G}, (\mathcal{G}_t)_{t \geq 0}, \mathbb{P})$ satisfying the usual conditions, an $n_G$-dimensional standard Brownian motion
    $$
    \boldsymbol{B} = (B_1, \ldots, B_{n_G})^{\mathrm{T}},\quad n_G \coloneqq \mathrm{dim}(\boldsymbol{V}_{p-1}^d),
    $$
    and a continuous, $\mathcal{G}_t$-adapted, $V_p$-valued process $\rho_h(t): [0, T] \times \Omega \rightarrow V_p$ such that $\mathbb{P}$-a.s., for every $v_h \in V_p$ and $t \in [0, T]$,
    $$
    (\rho_h(t), v_h) = (\rho_{0, h}, v_h) - \int_0^t a_h(\rho_h(s), v_h) \dd s + \frac{1}{\sqrt{N}}\sum_{j=1}^{n_G}\int_0^t (\boldsymbol{g}_j(\rho_h(s)), \dgrad v_h) \dd B_j(s).
    $$
    Moreover, for any $q \geq 2$, we have the moment bound
    $$
    \E{\sup_{t \in [0, T]}\norm{\rho_h(t)}^q} < \infty.
    $$
\end{theorem}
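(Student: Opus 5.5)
We plan to rewrite the scheme as a finite-dimensional SDE in coordinates and then apply the classical Skorokhod weak existence theorem for SDEs with continuous coefficients of linear growth. Fix a basis $\{\phi_k\}_{k=1}^{n_V}$ of $V_p$ and write $\rho_h(t)=\sum_k U_k(t)\phi_k$. Let $M$ be the mass matrix of $V_p$, $A$ the stiffness matrix of $a_h$, and $D$ the matrix representing $\dgrad: V_p\to\boldsymbol{V}_{p-1}^d$, so that $\dgrad \phi_k=\sum_i D_{ik}\boldf_i$. Testing with $v_h=\phi_k$ and using the computation in the proof of Proposition~\ref{prop: construct-diffusion-coefficients}, namely $(\boldsymbol{g}_j(u_h),\dgrad v_h)=(R(u_h)\alpha)_j$, we obtain
\[
\dd U = -M^{-1}A U\,\dd t + N^{-1/2} M^{-1} D^{\mathrm{T}} R(U)\,\dd \boldsymbol{B}(t).
\]
This is an $\mathbb{R}^{n_V}$-valued SDE driven by an $n_G$-dimensional Brownian motion. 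Its drift is linear, and by Proposition~\ref{prop: construct-diffusion-coefficients} its diffusion coefficient $\sigma(U)\coloneqq N^{-1/2}M^{-1}D^{\mathrm{T}}R(U)$ is continuous; in fact it is $1/2$-H\"older. It is not Lipschitz, however, because the square root degenerates where $P$ is singular. This is why we aim only for weak existence.

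The key estimate is linear growth of the coefficients. For the drift this is immediate. For the diffusion we use $\norm{R(U)}^2=\norm{P(U)}$, where $\norm{\cdot}$ is the operator norm, together with
\[
|\xi^{\mathrm{T}}P(U)\xi| = \int u_h^+ \Big|\sum_i\xi_i\boldf_i\Big|^2 \le \norm{u_h}_{L^\infty}\,\xi^{\mathrm{T}}M_G\xi .
\]
By equivalence of norms in finite dimensions, $\norm{u_h}_{L^\infty}\le C_h|U|$, so $\norm{\sigma(U)}^2\le C_h|U|$. This is sublinear and therefore certainly of linear growth; the constants may depend on $h$, which is harmless since the dimension is fixed. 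Skorokhod's theorem (for instance Ikeda--Watanabe, Thm.~IV.2.3, or Karatzas--Shreve, Thm.~5.4.22) then yields a filtered probability space satisfying the usual conditions, a Brownian motion $\boldsymbol{B}$, and a continuous adapted solution $U$ on $[0,T]$. Mapping back with $\rho_h=\sum_k U_k\phi_k$ and using linearity in $v_h$ gives the integral identity for every $v_h\in V_p$ simultaneously, almost surely.

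The main obstacle is the moment bound, together with ruling out explosion; the latter is already covered by linear growth. For the moments we apply It\^o's formula to $\norm{\rho_h}^2=U^{\mathrm{T}}MU$, or directly to $|U|^q$, stopped at $\tau_R=\inf\{t:|U(t)|\ge R\}$. The drift term is bounded by $C|U|^q$. Using the growth bound above, the It\^o correction is bounded by $C|U|^{q-2}\norm{\sigma}^2\le C(1+|U|^q)$. The martingale term we control with the Burkholder--Davis--Gundy inequality: its supremum is bounded by $C\,\bbE\big(\int_0^{t\wedge\tau_R}|U|^{2q-2}\norm{\sigma}^2\big)^{1/2}$. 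Young's inequality splits this into $\tfrac12\bbE\sup_{s\le t\wedge\tau_R}|U|^q$ plus $C\int_0^t\bbE\big(1+\sup_{r\le s\wedge\tau_R}|U(r)|^q\big)\,\dd s$. Absorbing the first piece, applying Gronwall, and letting $R\to\infty$ via Fatou gives $\E{\sup_{t\in[0,T]}\norm{\rho_h(t)}^q}<\infty$, since $\rho_{0,h}$ is deterministic. Since $\rho_h$ and $U$ have equivalent norms, this is the stated bound.
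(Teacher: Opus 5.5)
Your proposal is correct and follows essentially the same route as the paper: coordinate SDE with linear drift and a continuous diffusion coefficient whose squared norm grows like $C_h|U|$, weak existence and non-explosion via Ikeda--Watanabe, then moments by stopping at $\tau_R$, BDG, Gr\"onwall and Fatou. The differences are cosmetic. You bound the diffusion through the operator norm $\norm{R}^2=\norm{P}$, whereas the paper uses the Frobenius-norm identity supplied by Proposition~\ref{prop: construct-diffusion-coefficients}. You also apply It\^o's formula to $|U|^q$, whereas the paper estimates the stopped integral form directly.
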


\begin{proof}
    See Appendix~\ref{app: well-posed}.
\end{proof}

\begin{remark}
    We have a probabilistically weak solution because the constructed noise degenerates as $\rho_h^+(t) \rightarrow 0$. We also do not prove pathwise uniqueness. Neither of these is an issue for our resulting analysis.
\end{remark}

\begin{proposition}[Existence of the noise]\label{prop: existence-of-noise}
    In the same setting as Theorem~\ref{thm: weak-existence}, define $\mathcal{W}(\rho_h^+(t), v_h)$ for $v_h \in V_p$ via
    $$
    \dd \mathcal{W}(\rho_h^+(t), v_h) \coloneqq \sum_{j=1}^{n_G} (\boldsymbol{g}_j(\rho_h(t)), \dgrad v_h) \dd B_j(t).
    $$
    Then $\mathcal{W}(\rho_h^+(t), v_h)$ is a continuous, square-integrable martingale with quadratic cross-variation
    $$
    \dd \abrack{\mathcal{W}(\rho_h^+(t), v_{1, h}), \mathcal{W}(\rho_h^+(t), v_{2, h})}_t = (\rho_h^+(t), \dgrad v_{1, h} \cdot \dgrad v_{2, h}) \dd t,
    $$
    for any $v_{1, h}, v_{2, h} \in V_p$.
\end{proposition}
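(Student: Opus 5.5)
The plan is to view $\mathcal{W}(\rho_h^+(t), v_h)$ as a finite sum of It\^o integrals against the independent Brownian motions $B_j$ supplied by Theorem~\ref{thm: weak-existence}. Fix $v_h \in V_p$. The integrands are $\phi_j(t) \coloneqq (\boldsymbol{g}_j(\rho_h(t)), \dgrad v_h)$.

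First I check measurability. By Proposition~\ref{prop: construct-diffusion-coefficients} the map $u_h \mapsto \boldsymbol{g}_j(u_h)$ is continuous, and $\rho_h$ is continuous and $\mathcal{G}_t$-adapted. Hence each $\phi_j$ is continuous and adapted, so progressively measurable, and the stochastic integrals $\int_0^t \phi_j \dd B_j$ are well defined continuous local martingales.

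To upgrade to square-integrable martingales, it suffices to show $\E{\int_0^T \phi_j(s)^2 \dd s} < \infty$ for each $j$. I obtain this from the identity in Proposition~\ref{prop: construct-diffusion-coefficients} with $v_{1,h} = v_{2,h} = v_h$:
$$
\sum_j \phi_j(t)^2 = (\rho_h^+(t), |\dgrad v_h|^2) \le \norm{\rho_h(t)}\, \norm{|\dgrad v_h|^2}.
$$
The last factor is finite because $\dgrad v_h$ is a fixed piecewise polynomial. Integrating in time and taking expectations, the moment bound of Theorem~\ref{thm: weak-existence} with $q=2$ (which also controls $q=1$ by Jensen) gives $\E{\int_0^T \sum_j \phi_j^2 \dd s} \le T\,C(v_h)\,\E{\sup_t \norm{\rho_h(t)}} < \infty$. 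The It\^o isometry then makes each integral an $L^2$-bounded continuous martingale, and so is the finite sum $\mathcal{W}$.

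For the cross-variation, fix $v_{1,h}, v_{2,h}$ with integrands $\phi_j^{(1)}, \phi_j^{(2)}$. Bilinearity of the bracket together with $\dd\abrack{B_j, B_k}_t = \delta_{jk}\dd t$ gives
$$
\dd\abrack{\mathcal{W}(\cdot,v_{1,h}),\mathcal{W}(\cdot,v_{2,h})}_t = \sum_{j=1}^{n_G} \phi_j^{(1)}(t)\phi_j^{(2)}(t)\dd t.
$$
Applying Proposition~\ref{prop: construct-diffusion-coefficients} pointwise in $(t,\omega)$ with $u_h = \rho_h(t)$, this equals $(\rho_h^+(t), \dgrad v_{1,h}\cdot\dgrad v_{2,h})\dd t$.

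The only step that needs any care is the integrability step, i.e.\ getting a true martingale rather than a local one. The per-$j$ bound is immediate because each $\phi_j^2$ is dominated by the full sum, and the sum is controlled by Proposition~\ref{prop: construct-diffusion-coefficients} together with the moment bound of Theorem~\ref{thm: weak-existence}.
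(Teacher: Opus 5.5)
Your proposal is correct and follows the same route as the paper. The cross-variation comes from Proposition~\ref{prop: construct-diffusion-coefficients} applied pathwise with $u_h = \rho_h(t)$, and the upgrade from a local to a true square-integrable martingale comes from the moment bound of Theorem~\ref{thm: weak-existence}. You additionally spell out what the paper leaves implicit: progressive measurability via continuity of $\boldsymbol{g}_j$, and the bound $\sum_j \phi_j^2 \le \norm{\rho_h(t)}\,\norm{\absv{\dgrad v_h}^2}$ that feeds the It\^o isometry.
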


\begin{proof}
    This follows immediately from Proposition~\ref{prop: construct-diffusion-coefficients} and Theorem~\ref{thm: weak-existence}. In particular, the former implies that $\mathcal{W}$ has the stated cross-variation structure and the moment bounds in the latter imply that the noise is a true continuous, square integrable martingale rather than a continuous local martingale.
\end{proof}

\begin{remark}[General mobilities]\label{rem: noise-general-mobility}
    A significant advantage of our noise construction is that it readily extends to general divergence-form noise of the following type
    \begin{equation}\label{eq: general-div-form-noise}
    \nabla \cdot \brack{\sqrt{m(\rho)} \bxi},
    \end{equation}
    where $m(\rho)$ is a prescribed mobility (e.g., in the Dean--Kawasaki setting, $m(\rho) = \rho$). Formally, the cross-variation of this noise satisfies
    $$
    \dd \abrack{\int_{\btd} \sqrt{m(\rho)} \bxi \cdot \nabla v_1, \int_{\btd} \sqrt{m(\rho)} \bxi \cdot \nabla v_2} = (m(\rho), \nabla v_1 \cdot \nabla v_2) \dd t,
    $$
    for sufficiently regular test functions $v_1, v_2$. As a result, the noise construction extends directly to this setting by defining the weighted mass matrix via
    $$
    P(\rho_h)_{ij} \coloneqq \int_{\btd} [m(\rho)]_h \boldsymbol{f}_i \cdot \boldsymbol{f}_j \dd \bx,
    $$
    where $[m(\rho)]_h$ is a suitable non-negative numerical approximation to $m(\rho)$. The remainder of the construction is unchanged. In particular, our proposed method should be viewed as a general procedure for discretising multiplicative, divergence-form noise whilst preserving its cross-variation structure at the discrete level, rather than as a construction tied specifically to the Dean--Kawasaki equation \eqref{eq: Dean--Kawasaki}.
    
    This observation considerably widens the range of potential applications. For instance, studying the Simple Symmetric Exclusion Process leads to a mobility of the form $m(\rho) = \rho(1-\rho)$ (see, for example,~\cite{dirr2026conservative}). More broadly, generalised Dean--Kawasaki equations with noise of the form \eqref{eq: general-div-form-noise} have been extensively studied in~\cite{fehrman2024well} (and references therein).
\end{remark}

\begin{remark}[Simple representation when $p=1$]
    When $p=1$, the martingale $\mathcal{W}$ admits the simpler representation
    $$
    \dd \mathcal{W}(\rho_h^+(t), v_h) = \sum_{K \in \Th}\sum_{i=1}^d \sqrt{\frac{\Pi_h^0 \rho_h^+(t)}{\mathrm{meas}(K)}}(\boldsymbol{f}_{K, i}, \dgrad v_h) \dd \boldsymbol{B}_{K, i}(t),
    $$
    where $\Pi_h^0$ denotes the $L^2$ projection onto $V_0$, $\boldsymbol{f}_{K, i} \coloneqq \mathbbm{1}_K \boldsymbol{e}_i$ for $K \in \Th$, $i=1, \ldots, d$ and where $\{\boldsymbol{e}_i\}_{i=1}^d$ is the standard basis for $\mathbb{R}^d$, and $\{\boldsymbol{B}_{K, i}\}$ is a collection of standard, i.i.d.\ Brownian motions.

    This follows from considering the proof of Proposition~\ref{prop: construct-diffusion-coefficients} and noticing that the mass matrices $M_G$ and $P(\rho_h)$ are diagonal. As a result, $R(\rho_h)$ can be obtained by simply square rooting each entry of $P(\rho_h)$. Therefore, exploiting that the basis functions are constant and plugging the definitions of $M_G$ and $R(\rho_h)$ into the general expression for $\boldsymbol{g}_j(\rho_h)$ gives
    $$
    \boldsymbol{g}_{K, i}(\rho_h(t)) = \frac{\sqrt{\int_K \rho_h^+(t, \bx) \dd \bx}}{\mathrm{meas}(K)} \boldsymbol{f}_{K, i} = \sqrt{\frac{\Pi_h^0 \rho_h^+(t)}{\mathrm{meas}(K)}} \boldsymbol{f}_{K, i},
    $$
    and the claim follows.

    This is advantageous from a computational perspective since we no longer need to perform a matrix factorisation. Instead, we only need to project onto $V_0$ which is equivalent to inverting a diagonal matrix.
\end{remark}

Armed with our explicit construction of the noise, we can show that \eqref{eq: dg-dk} conserves global mass and is locally conservative. The latter property means that mass transfer on an element is only driven by fluxes across its boundary.

\begin{proposition}\label{prop: global-conservation}
    The numerical scheme \eqref{eq: dg-dk} conserves global mass. That is, for all $t > 0$ we have
    $$
    \dd \int_{\btd} \rho_h(t, \bx) \dd \bx = 0.
    $$
\end{proposition}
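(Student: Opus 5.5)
The plan is to test \eqref{eq: dg-dk} with the constant function $v_h \equiv 1$. This is admissible because constants lie in $V_p$ for every $p \geq 0$. Since $(\rho_h(t), 1) = \int_{\btd} \rho_h(t,\bx)\dd\bx$, the claim reduces to showing that every term on the right-hand side of \eqref{eq: dg-dk} vanishes for this choice of test function.

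For the deterministic part, $v_h \equiv 1$ has $\nabla_h v_h = 0$ elementwise. This kills the volume term $(\nabla_h\rho_h,\nabla_h v_h)$ and the facet term $\jump{\rho_h}\cdot\avg{\nabla_h v_h}$. Because $v_h$ is globally continuous, including across the periodically identified facets of the torus, we have $\jump{v_h} = v_h|_{K_\ell}\bn_{K_\ell} + v_h|_{K_r}\bn_{K_r} = \bn_{K_\ell}+\bn_{K_r} = 0$ on every $F\in\Fh$. This kills the remaining consistency term and the penalty term. Hence $a_h(\rho_h(t),1)=0$.

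For the stochastic part, note that $\dgrad 1 = \nabla_h 1 - \globallift(\jump{1})$. Each local lift $\locallift(\boldsymbol{0})$ is zero by linearity of the defining relation \eqref{eq: local-lift}, so $\dgrad 1 = 0$. It follows that $(\boldsymbol{g}_j(\rho_h(t)),\dgrad 1)=0$ for each $j$, and therefore $\dd\mathcal{W}(\rho_h^+(t),1)=0$. Equivalently, by Proposition~\ref{prop: existence-of-noise}, the quadratic variation is $(\rho_h^+,|\dgrad 1|^2)\dd t = 0$, and a continuous martingale with zero quadratic variation started at zero is identically zero.

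Combining these, $\dd(\rho_h(t),1)=0$ in the sense of the integral identity of Theorem~\ref{thm: weak-existence}, holding $\mathbb{P}$-a.s. for all $t$. I expect no real obstacle. The only point needing care is that the jump vanishes on the periodic boundary facets, which holds because those facets are treated as interior facets with opposite normals.
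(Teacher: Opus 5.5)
Your proposal is correct and follows the paper's proof essentially line by line. You test with $v_h = 1$, observe that $a_h(\rho_h,1)=0$ because $\nabla_h 1 = 0$ and $\jump{1}=0$, and kill the noise via $\dgrad 1 = 0$; your direct observation that each integrand $(\boldsymbol{g}_j(\rho_h),\dgrad 1)$ vanishes is just a slightly more immediate route to the conclusion the paper draws from the zero quadratic variation.
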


\begin{proof}
    Choose $v_h = 1$ globally in \eqref{eq: dg-dk}. Then $a_h(\rho_h, 1) = 0$ since $\nabla_h 1=0$ and $\jump{1} = 0$. Moreover, $\dgrad 1 = 0$ so the quadratic variation of the noise is $\abrack{\mathcal{W}(\rho_h^+(t), 1)} = 0$. Since $\mathcal{W}(\rho_{0, h}, 1) = 0$, it follows that $\mathcal{W}(\rho_h^+(t), 1) = 0$ for any $t > 0$ since a martingale with zero quadratic variation is equal to its initial value.
\end{proof}

\begin{proposition}\label{prop: local-conservation}
    The numerical scheme \eqref{eq: dg-dk} is locally conservative. That is, the rate of change of mass on any element is driven only by deterministic and stochastic fluxes across its boundary. In mathematical terms, for any $K \in \Th$ and any $t > 0$ we have
    $$
    \dd \int_K \rho_h(t, \bx) \dd \bx = -\int_{\partial K} \widehat{\boldsymbol{J}}_{\mathrm{det}} \cdot \bn_K \dd S \dd t -\int_{\partial K} \dd \widehat{\boldsymbol{J}}_{\mathrm{stoch}} \cdot \bn_K \dd S.
    $$
    The deterministic flux is given by $\widehat{\boldsymbol{J}}_{\mathrm{det}} = -\frac{1}{2}\avg{\nabla_h\rho_h} + \frac{\eta}{2h_F}\jump{\rho_h}$ and the stochastic flux is given by $\dd \widehat{\boldsymbol{J}}_{\mathrm{stoch}} = N^{-1/2}\sum_j \avg{\boldsymbol{g}_j(\rho_h)}\dd B_j$ where $\boldsymbol{g}_j$ are the functions introduced in Proposition~\ref{prop: construct-diffusion-coefficients} and $B_j$ are independent, standard Brownian motions.
\end{proposition}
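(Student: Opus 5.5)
The plan is to test \eqref{eq: dg-dk} with the indicator $v_h = \mathbbm{1}_K$, which lies in $V_p$ for every $p \geq 0$. Then $(\rho_h(t), v_h) = \int_K \rho_h(t,\bx)\dd\bx$, and $\nabla_h v_h = 0$ everywhere. The volume term $-\frac12(\nabla_h\rho_h,\nabla_h v_h)$ vanishes, and so does the facet term containing $\avg{\nabla_h v_h}$.

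The jump of $\mathbbm{1}_K$ is supported on $\partial K$. On each facet $F \in \Fk$ it equals $\jump{\mathbbm{1}_K} = \bn_K$, because the trace from the neighbouring element is zero. On the torus every facet is interior, so this holds for all $F \in \Fk$. The remaining deterministic terms therefore become
$$
\frac12\sum_{F\in\Fk}\int_F \avg{\nabla_h\rho_h}\cdot\bn_K \dd S - \frac{\eta}{2}\sum_{F\in\Fk}h_F^{-1}\int_F\jump{\rho_h}\cdot\bn_K\dd S = -\int_{\partial K}\widehat{\boldsymbol{J}}_{\mathrm{det}}\cdot\bn_K\dd S,
$$
with $\widehat{\boldsymbol{J}}_{\mathrm{det}}$ as in the statement and $h_F$ read facetwise. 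This is just a rearrangement of signs.

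For the stochastic term I would write $\dd\mathcal{W}(\rho_h^+,\mathbbm{1}_K) = \sum_j(\boldsymbol{g}_j(\rho_h),\dgrad\mathbbm{1}_K)\dd B_j$ and compute $\dgrad\mathbbm{1}_K = -\globallift(\jump{\mathbbm{1}_K}) = -\sum_{F\in\Fk}\locallift(\bn_K)$. Since $\boldsymbol{g}_j(\rho_h)\in\boldsymbol{V}_{p-1}^d$ by Proposition~\ref{prop: construct-diffusion-coefficients}, the lifting identity \eqref{eq: local-lift} with $\ell=p-1$ can be applied with $\boldsymbol{g}_j$ as the test function. This gives
$$
(\boldsymbol{g}_j(\rho_h),\dgrad\mathbbm{1}_K) = -\sum_{F\in\Fk}\int_F\avg{\boldsymbol{g}_j(\rho_h)}\cdot\bn_K\dd S.
$$
Multiplying by $N^{-1/2}$ and summing against $\dd B_j$ yields $-\int_{\partial K}\dd\widehat{\boldsymbol{J}}_{\mathrm{stoch}}\cdot\bn_K\dd S$. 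Combining the two parts gives the claimed identity. It holds as an identity of It\^o differentials for the weak solution of Theorem~\ref{thm: weak-existence}, and all integrals are well defined since the process takes values in $V_p$.

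The only delicate step is the lifting computation. One must check that $\boldsymbol{g}_j$ lies in the lifting space $\boldsymbol{V}_{p-1}^d$; this is exactly why the lift order $\ell=p-1$ matches the space of the $\boldsymbol{g}_j$. One must also handle the case $p=0$, where $\boldsymbol{V}_{-1}^d$ is trivial and the statement degenerates. Beyond that, the work is sign and normal bookkeeping: using $\bn_{K'} = -\bn_K$ on shared facets confirms that the fluxes are single-valued on each facet. That is what makes the scheme conservative and also recovers Proposition~\ref{prop: global-conservation} after summing over $K$.
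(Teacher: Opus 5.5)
Your proposal is correct and follows essentially the same route as the paper: test with $\mathbbm{1}_K$, use $\nabla_h \mathbbm{1}_K = 0$ and $\jump{\mathbbm{1}_K} = \bn_K$ on $F \in \Fk$ to reduce $a_h(\rho_h,\mathbbm{1}_K)$ to boundary fluxes, and evaluate the noise via $\dgrad \mathbbm{1}_K = -\globallift(\jump{\mathbbm{1}_K})$ together with the lifting identity \eqref{eq: local-lift}. Your explicit check that $\boldsymbol{g}_j(\rho_h) \in \boldsymbol{V}_{p-1}^d$ may serve as the test function in the lifting identity is a point the paper's proof uses without stating it.
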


\begin{proof}
    Fix an element $K \in \Th$ and choose $v_h = \mathbbm{1}_K$ in \eqref{eq: dg-dk} where $\mathbbm{1}_K$ denotes the indicator function on $K$:
    $$
    \dd (\rho_h, \mathbbm{1}_K) + a_h(\rho_h, \mathbbm{1}_K) \dd t = N^{-1/2}\dd \mathcal{W}(\rho_h^+, \mathbbm{1}_K).
    $$
    Since $\nabla_h \mathbbm{1}_K = 0$, the SIP bilinear form reduces to only boundary integrals
    $$
    a_h(\rho_h, \mathbbm{1}_K) = - \int_{\partial K} \brack{\frac{1}{2} \avg{\nabla_h \rho_h} - \frac{\eta}{2 h_F}\jump{\rho_h}}\cdot \bn_K \dd S.
    $$
    The discrete gradient satisfies
    $$
    \dgrad \mathbbm{1}_K = - \boldsymbol{\mathcal{L}}_h(\jump{\mathbbm{1}_K}) = -\sum_{F \in \Fh}\boldsymbol{\mathcal{L}}_F(\jump{\mathbbm{1}_K}),
    $$
    where we have again used that $\nabla_h \mathbbm{1}_K = 0$. Using Proposition~\ref{prop: existence-of-noise}, it follows that
    \begin{align*}
    \dd \mathcal{W}(\rho_h^+, \mathbbm{1}_K) &= -\sum_j \dd B_j \sum_{F \in \Fh}(\boldsymbol{g}_j(\rho_h), \locallift(\jump{\mathbbm{1}_K})) \\
    &= -\sum_j \dd B_j \sum_{F \in \Fh}\int_F \avg{\boldsymbol{g}_j(\rho_h)} \cdot \jump{\mathbbm{1}_K} \dd S.
    \end{align*}
    For $F \in \mathcal{F}_K$, we have $\jump{\mathbbm{1}_K} = \bn_K$, otherwise it evaluates to zero. It follows that
    \begin{align*}
    \dd \mathcal{W}(\rho_h^+, \mathbbm{1}_K) &= -\sum_j \dd B_j \sum_{F \in \Fk}\int_F \avg{\boldsymbol{g}_j(\rho_h)} \cdot \bn_K \dd S \\
    &= -\sum_j \dd B_j \int_{\partial K} \avg{\boldsymbol{g}_j(\rho_h)} \cdot \bn_K \dd S,
    \end{align*}
    and the claim follows.
\end{proof}

\section{Weak error analysis}\label{sec: main-result}

Before stating and proving the weak error result, we introduce the auxiliary objects and assumptions needed to formulate it.

\subsection{Mean-field limit}\label{subsec: mfl}

We introduce the mean-field limits for the particle system and numerical scheme. These provide the centering for the fluctuations we wish to compare. The Brownian particle system \eqref{eq: empirical-measure} is associated to the following mean-field PDE
\begin{equation}
    \partial_t \overline{\rho} = \frac{1}{2}\Delta \overline{\rho}, \quad \text{ on } \btd \times (0, T],
\end{equation}
with initial condition $\overline{\rho}(0) \coloneqq \overline{\rho}_0$. We also introduce its SIP semi-discretisation: find $\overline{\rho}_h(t, \cdot) \in V_p$ for $t \in (0, T]$ such that
\begin{equation}\label{eq: mfl}
    (\partial_t\overline{\rho}_h(t), v_h) + a_h(\overline{\rho}_h(t), v_h) = 0,
\end{equation}
holds for all $v_h \in V_p$ with initial condition $\rho_{0, h} \in V_p$. Since the drift in \eqref{eq: dg-dk} is linear and the noise is a true martingale (Proposition~\ref{prop: existence-of-noise}), it follows that $\E{\rho_h(t)} = \overline{\rho}_h(t)$ for all $t > 0$. Equality at $t=0$ follows since we start both from the same initial condition $\rho_{0, h}$.

\subsection{Assumptions}

Our first assumption specifies the set of admissible initial data whilst the second provides the scaling regime between $N$ and $h$ required by our analysis.

\begin{assumptionp}{dG1}[Initial conditions]\label{ass: dG1}
    We make the following assumptions on initial conditions:
    \begin{enumerate}
        \item the initial datum for the continuous mean-field limit $\overline{\rho}_0$ has regularity $\overline{\rho}_0 \in W^{p+1, \infty} \cap H^{p+2}$ and its minimum and maximum satisfy
        $$
        \rhomin \coloneqq \min_{\bx \in \btd}\overline{\rho}_0(\bx) > 0, \quad \rhomax \coloneqq \max_{\bx \in \btd}\overline{\rho}_0(\bx) < \infty.
        $$
        \item the initial positions $\{\bB_r(0)\}_{r=1}^N$ are deterministic.
        \item the function $\rho_{0, h} \in V_p$ satisfies the following:
        \begin{enumerate}
            \item it is positive (i.e., $\rho_{0, h} > 0$).
            \item it approximates the empirical measure of the initial configuration $\mu_0^N \coloneqq N^{-1}\sum_{r=1}^N \delta_{\bB_r(0)}$ with accuracy $p$ in the sense that
            \begin{equation}\label{eq: dG1-3b}
                \absv{N^{-1}\sum_{r=1}^N v(\bB_r(0)) - (\rho_{0, h}, \mathcal{I}_h v)} \leq Ch^p \norm{v}_{W^{p+1, \infty}},
            \end{equation}
            holds for all $v \in W^{p+1, \infty}$.
            \item it approximates $\overline{\rho}_0$ with accuracy $p+1$ in the sense that
            \begin{equation}\label{eq: dG1-3c}
            \norm{\rho_{0, h} - \overline{\rho}_0} \leq Ch^{p+1} \norm{\overline{\rho}_0}_{H^{p+2}}.
            \end{equation}
        \end{enumerate}
    \end{enumerate}
\end{assumptionp}

\begin{remark}
    The $O(h^p)$ error in \eqref{eq: dG1-3b} is the minimal order required so that the error from the initial condition does not dominate the weak error estimate in Theorem~\ref{thm: main-theorem}. The $O(h^{p+1})$ error in \eqref{eq: dG1-3c} is required so that the exponent $p+1-d/2$ appearing in Corollary~\ref{cor: ass-corollary} is strictly positive for $p \geq 1$ and $d \in \{1, 2, 3\}$. This is not true for $p-d/2$. In Appendix~\ref{app: construction-of-initial-data}, we give an example of how one can construct initial data satisfying Assumption~\ref{ass: dG1}.
\end{remark}

\begin{remark}
    We expect our results to extend to random initial conditions by appropriate adaptation of the considerations in~\cite{cornalba2026density}.
\end{remark}

\begin{assumptionp}{dG2}[Scaling of $N$ and $h$]\label{ass: dG2}
    Fix $\delta \in (0, \rhomin)$ independent of $N$ and $h$. We assume the scaling
    \begin{equation}
        C(d, \delta, \rhomin, \rhomax)N^{-1/d}\absv{\log N}^{2/d}(1+T) \leq h \leq h_0(\delta),
    \end{equation}
    where
    $$
    h_0(\delta) \coloneqq \min \left \{C(p, d, \mfl_0) \delta^{1/(p+1-d/2)}, 1 \right \},
    $$
    for constants independent of the discretisation parameter $h$.
\end{assumptionp}

\begin{remark}[The role of positivity of $\overline{\rho}_h$]
    Our analysis relies upon strict positivity of the semi-discrete mean-field limit $\overline{\rho}_h(t)$. Specifically, it is required for the proof of Proposition~\ref{prop: exp-decay-negative-part}, in which we show the size of the negative part of the solution to \eqref{eq: dg-dk} is exponentially small. Unfortunately, a SIP-dG discretisation of the heat equation does not obey a discrete maximum principle, which would bound the solution by the extrema of its initial datum. In fact, we do not even have the weaker property that a non-negative initial datum yields a non-negative solution. We must therefore establish strict positivity by other means. We choose to use an upper bound on the mesh size defined via $h_0(\delta)$ in Assumption~\ref{ass: dG2}. The proof that this implies strict positivity is given in Corollary~\ref{cor: ass-corollary}.
\end{remark}

\begin{remark}[The role of $\delta$]
    The parameter $\delta \in (0, \rhomin)$ in Assumption~\ref{ass: dG2} controls how much the semi-discrete mean-field limit can deviate from the minimum of the positive initial datum. Choosing $\delta$ close to the upper limit loosens the mesh restriction but worsens the constants in \eqref{eq: E(N, h)} (and vice versa). If one wishes, $\delta$ could be fixed once and for all (e.g., $\delta = \rhomin/2$) to remove this degree of freedom.
\end{remark}

\begin{corollary}\label{cor: ass-corollary}
    Assume the validity of Assumption~\ref{ass: dG1}. Fix $\delta \in (0, \rhomin)$. Then, for all $p \in \mathbb{N}$ and $d \in \{1,2,3\}$, there exists $h_0(\delta) > 0$ such that, for $h \leq h_0(\delta)$, we have the following bound:
    \begin{equation}
        0 < \rho_{m, \delta} \leq \sdmfl(t, \bx) \leq \rho_{M, \delta} < \infty \quad \forall (t, \bx) \in [0, T] \times \btd,
    \end{equation}
    where we have set $\rho_{m, \delta} \coloneqq \rhomin - \delta$ and $\rho_{M, \delta} \coloneqq \rhomax + \delta$.
    In particular, since $\sdmfl(0) = \rho_{0, h}$, the discrete initial datum satisfies
    \begin{equation}\label{eq: rho0h-two-sided}
        \rho_{m, \delta} \leq \rho_{0, h}(\bx) \leq \rho_{M, \delta} \quad \forall \bx \in \btd.
    \end{equation}
\end{corollary}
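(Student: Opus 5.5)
The plan is to compare $\sdmfl(t)$ with the exact mean-field solution $\mfl(t)$ in $L^\infty$, via an $L^2$ error estimate combined with an inverse inequality. Two facts about $\mfl$ come first.

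By the maximum principle for the heat equation on $\btd$, we have $\rhomin \le \mfl(t,\bx) \le \rhomax$ for all $(t,\bx)$. Since the heat semigroup is a contraction on every $H^k$, we also have $\norm{\mfl(t)}_{H^{p+2}} \le \norm{\mfl_0}_{H^{p+2}}$ and $\norm{\mfl(t)}_{W^{p+1,\infty}} \le \norm{\mfl_0}_{W^{p+1,\infty}}$. It therefore suffices to show
$$
\sup_{t\in[0,T]}\norm{\sdmfl(t)-\mfl(t)}_{L^\infty} \le C(p,d,\mfl_0)\, h^{p+1-d/2},
$$
because the choice $h_0(\delta) = \min\{C\delta^{1/(p+1-d/2)},1\}$ from Assumption~\ref{ass: dG2} then makes the right-hand side at most $\delta$. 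The exponent $p+1-d/2$ is positive for $p\ge1$ and $d\le3$. This gives the two-sided bound with $\rho_{m,\delta}=\rhomin-\delta>0$ and $\rho_{M,\delta}=\rhomax+\delta$. Evaluating at $t=0$, where $\sdmfl(0)=\rho_{0,h}$, gives \eqref{eq: rho0h-two-sided}.

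For the $L^\infty$ estimate, let $\Pi_h$ be the $L^2$ projection onto $V_p$ (or the SIP elliptic projection $\ritz$) and split
$$
\sdmfl-\mfl = (\sdmfl - \ritz\mfl) + (\ritz\mfl - \mfl) =: \theta_h + \eta.
$$

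\textbf{The term $\eta$.} Standard approximation gives $\norm{\eta}_{L^\infty}\le Ch^{p+1}\norm{\mfl}_{W^{p+1,\infty}}$ for the $L^2$ projection or interpolant. For the elliptic projection one can instead pass through $\mathcal{I}_h\mfl$ and an inverse estimate, losing at most $h^{-d/2}$ on an $O(h^{p+1})$ $L^2$ quantity.

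\textbf{The term $\theta_h \in V_p$.} By the inverse inequality on a quasi-uniform mesh, \eqref{eq: quasi-uniform}, we have $\norm{\theta_h}_{L^\infty}\le Ch^{-d/2}\norm{\theta_h}_{L^2}$. It remains to bound $\norm{\theta_h(t)}_{L^2}\le Ch^{p+1}$ uniformly on $[0,T]$. This is the standard semi-discrete parabolic SIP estimate, and I would take it from Appendix~\ref{sec: heat-estimates}, applied to the forward equation.

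Using consistency of $a_h$ for smooth $\mfl$ (so $a_h(\mfl,v_h)=(-\tfrac12\Delta\mfl,v_h)$), the error equation reads
$$
(\partial_t\theta_h,v_h)+a_h(\theta_h,v_h)=-(\partial_t\eta,v_h).
$$
Testing with $\theta_h$ and using coercivity of $a_h$ gives
$$
\norm{\theta_h(t)}\le\norm{\theta_h(0)}+\int_0^T\norm{\partial_t\eta}.
$$
Since $\partial_t\mfl=\tfrac12\Delta\mfl$ has $H^{p}$ norm controlled by $\norm{\mfl_0}_{H^{p+2}}$, we get $\norm{\partial_t\eta}\le Ch^{p+1}$ from duality ($L^2$-optimal) estimates for $\ritz$. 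For the initial term, $\norm{\theta_h(0)}\le\norm{\rho_{0,h}-\mfl_0}+\norm{\eta(0)}\le Ch^{p+1}$ by \eqref{eq: dG1-3c}.

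The main obstacle is the $L^2$-optimal $O(h^{p+1})$ estimate for the elliptic projection of $\partial_t\mfl$, which needs an Aubin--Nitsche argument, i.e. elliptic regularity of the adjoint problem on the torus. This is exactly why $H^{p+2}$ regularity is assumed. A related subtlety is that the constant must stay uniform in $T$, or else be allowed to depend on $T$ through $C(p,d,\mfl_0)$; the integral $\int_0^T\norm{\partial_t\eta}$ produces a factor $T$ that would need to be absorbed into the constant.
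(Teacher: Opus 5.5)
Your route is essentially the paper's. You take an $O(h^{p+1})$ $L^2$ error bound for the semi-discrete heat flow, pay $h^{-d/2}$ through the inverse inequality, and add an $O(h^{p+1})$ approximation term. The paper quotes Theorem~\ref{thm: SIP-parabolic-L2} and splits through $\mathcal{I}_h\mfl$, whereas you re-derive the $L^2$ bound inline. Your bound $\norm{\theta_h(t)}\le\norm{\theta_h(0)}+\int_0^t\norm{\partial_t\eta}$ even avoids Gr\"onwall.

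There is, however, a regularity miscount in your bound on $\partial_t\eta$ that breaks the result as written. The duality estimate for the Ritz projection is $\norm{w-\ritz w}\le Ch^{p+1}\norm{w}_{H^{p+1}}$. With only $w=\partial_t\mfl(t)\in H^{p}$, which is all that $\norm{\mfl(t)}_{H^{p+2}}\le\norm{\mfl_0}_{H^{p+2}}$ gives pointwise in time, you get only $\norm{\partial_t\eta(t)}\le Ch^{p}$. That yields $\norm{\theta_h}=O(h^p)$ and an $L^\infty$ error $O(h^{p-d/2})$. This does not tend to zero for $p=1$, $d\in\{2,3\}$, which is exactly the failure the remark after Assumption~\ref{ass: dG1} warns about. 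A uniform-in-time bound on $\norm{\partial_t\mfl(t)}_{H^{p+1}}$ would need $\mfl_0\in H^{p+3}$, which is not assumed.

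The fix is what the paper does inside Theorem~\ref{thm: SIP-parabolic-L2}. You never need $\partial_t\eta$ pointwise in time, only in $L^1(0,T;L^2)$. The parabolic regularity estimate \eqref{heat-estimate} gives $\norm{\partial_t\mfl}_{L^2(0,T;H^{p+1})}\le C\norm{\mfl_0}_{H^{p+2}}$; on the torus this is just the energy identity for the $H^{p+2}$ seminorm. By Cauchy--Schwarz in time,
\[
\int_0^T\norm{\partial_t\eta}\,\dd t\le T^{1/2}\Big(\int_0^T\norm{\partial_t\mfl-\ritz\,\partial_t\mfl}^2\,\dd t\Big)^{1/2}\le CT^{1/2}h^{p+1}\norm{\mfl_0}_{H^{p+2}}.
\]
With this replacement your argument closes and gives the claimed $O(h^{p+1-d/2})$ bound, with a $T$-dependent constant as you anticipated. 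This also corrects your diagnosis of the ``main obstacle''. $H^{p+2}$ is needed to get this time-integrated $H^{p+1}$ regularity of $\partial_t\mfl$, not for the Aubin--Nitsche step. On the torus that step only uses standard $H^2$ regularity of the mean-zero Poisson problem and is available as Theorem~\ref{thm: SIP-elliptic}.
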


\begin{proof}
    For any $\bx \in \btd$ and $t \in [0, T]$ we estimate
    \begin{align*}
        \sdmfl(t, \bx) &= \mfl(t, \bx) - (\mfl(t, \bx) - \sdmfl(t, \bx)), \\
        &\leq \rhomax + \sup_{t \in [0, T]}\norm{\sdmfl(t, \cdot) - \mfl(t, \cdot)}_{L^\infty}, \\
        &\leq \rhomax + \sup_{t \in [0, T]}\norm{\mfl(t, \cdot) - \mathcal{I}_h \mfl(t, \cdot)}_{L^\infty} + \sup_{t \in [0, T]}\norm{\mathcal{I}_h \mfl(t, \cdot) - \sdmfl(t, \cdot)}_{L^\infty},
    \end{align*}
    using the maximum principle for the continuous heat equation. The first term can be settled using a standard interpolation estimate in $L^\infty$~\cite[Theorem 11.13]{ern2021finite} and the maximum principle for the continuous heat equation:
    $$
    \sup_{t \in [0, T]}\norm{\mfl(t, \cdot) - \mathcal{I}_h \mfl(t, \cdot)}_{L^\infty} \leq Ch^{p+1}\norm{\mfl_0}_{W^{p+1, \infty}}.
    $$
    For the second term, we apply an $L^\infty$-$L^2$ inverse inequality~\cite[Lemma 12.1]{ern2021finite}, use the triangle inequality and use another standard interpolation estimate~\cite[Theorem 11.13]{ern2021finite} in $L^2$, coupled with the maximum principle. We find that
    \begin{align*}
        \sup_{t \in [0, T]}\norm{\mathcal{I}_h \mfl(t, \cdot) - \sdmfl(t, \cdot)}_{L^\infty} &\lesssim h^{-\frac d 2} \sup_{t \in [0, T]} \brack{\norm{\mathcal{I}_h \mfl(t, \cdot) - \mfl(t, \cdot)} + \norm{\mfl(t, \cdot) - \sdmfl(t, \cdot)}}, \\
        &\lesssim h^{-\frac d 2} \Big(Ch^{p+1}\norm{\mfl_0}_{H^{p+1}} + \sup_{t \in [0, T]}\norm{\mfl(t, \cdot) - \sdmfl(t, \cdot)}\Big).
    \end{align*}
    By Theorem~\ref{thm: SIP-parabolic-L2}, we have 
    $$
    \sup_{t \in [0, T]}\norm{\mfl(t, \cdot) - \sdmfl(t, \cdot)} \leq Ch^{p+1}\norm{\mfl_0}_{H^{p+2}} + \norm{\mfl_0 - \sdmfl(0)},
    $$
    so it remains to estimate the initial discrepancy. This is done using Assumption~\ref{ass: dG1} giving
    $$
    \sdmfl(t, \bx) \leq \rhomax + C\brack{h^{p+1} + h^{p+1-d/2}}.
    $$
    Since $h \leq 1$ and $d/2 > 0$, we have $h^{p+1} \leq h^{p+1-d/2}$, so the correction is bounded by $2Ch^{p+1-d/2}$. One can then choose $h$ such that
    $
    h \lesssim \delta^{1/(p+1-d/2)},
    $
    making this correction at most $\delta$, which gives the upper bound of the claim. An analogous argument gives the lower bound.
\end{proof}

\begin{remark}[Maximum-norm stability]\label{rem: maxnorm-stability}
    Corollary~\ref{cor: ass-corollary} uses an $L^\infty_t (L^\infty_x)$ estimate for a SIP-dG discretisation of the heat equation which, to the best of our knowledge, does not currently exist in the literature. Instead, we obtain this using an $L^2$-$L^\infty$ inverse estimate coupled with an $L^\infty_t(L^2_x)$ error estimate. This incurs a factor of $h^{-d/2}$, which leads to a more restrictive upper bound on the mesh size, particularly for $d \in \{2, 3\}$. Recently, in~\cite{chen2025maximum}, the authors prove maximum-norm stability of the semi-discrete semigroup associated to a general parabolic problem for an interior-penalty hybridised dG method on polyhedral domains from which $L^\infty_t (L^\infty_x)$ estimates follow. These estimates have order $O(\absv{\log h}^{\alpha_d+1} h^{p+1})$ for some $\alpha_d > 0$ depending on the dimension. They remark that their techniques should also apply to the SIP-dG setting which would improve our result by removing the $h^{-d/2}$ penalty thus weakening the mesh-size restriction in Assumption~\ref{ass: dG2}. The same estimates would serve the adjoint heat flows of Section~\ref{subsec: adjoint-heat-flow} equally well. There we take a different route, avoiding $L^\infty_t(L^\infty_x)$ bounds altogether by means of an $L^\infty$-$L^1$ H\"older inequality (Section~\ref{subsec: proof-outline}); that route has the advantage of incurring no logarithmic factor. We state the consequences of a SIP-dG extension in Conditional Assumption~\ref{ass: special}.

    \end{remark}

    \begin{conditionalassumption}\label{ass: special}
        Assume that the result of~\cite{chen2025maximum} extends to SIP-dG, with appropriate adaptation to periodic domains. That is, we have an $L^\infty_t(L^\infty_x)$ estimate of the form (cf.~\cite[Theorem 1.2]{chen2025maximum})
        $$
        \sup_{t \in [0, T]}\norm{\overline{\rho} - \overline{\rho}_h}_{L^\infty} \lesssim \absv{\log h}^{\alpha_d + 1}h^{p+1}.
        $$
        Then Assumption~\ref{ass: dG2} can be improved so that the mesh restriction $h_0(\delta)$ scales like
        $$
        \absv{\log h_0}^{\alpha_d + 1}h_0^{p+1} \lesssim \delta.
        $$
        That is, \emph{if the extension of~\cite{chen2025maximum} to SIP-dG holds}, the scaling is improved from $p+1-d/2$ to $p+1$ up to a logarithmic factor.
    \end{conditionalassumption}

\subsection{Statement of the weak error estimate}

Our weak error estimate quantifies the error between fluctuations of the underlying particle system \eqref{eq: empirical-measure} and fluctuations of the semi-discrete discontinuous Galerkin scheme proposed for the Dean--Kawasaki equation \eqref{eq: dg-dk}, measured in a suitably weak metric. Specifically, since the former is a measure and the latter is a density they cannot be compared pathwise. Instead, we test both objects against a suitable class of test functions to produce `observables' and then look at the error between arbitrary moments of these quantities. This is stated rigorously in the following result:

\begin{theorem}\label{thm: main-theorem}
Assume the validity of Assumption~\ref{ass: dG1} and Assumption~\ref{ass: dG2}. In particular, by Corollary~\ref{cor: ass-corollary} the mean-field limit $\overline{\rho}_h$ \eqref{eq: mfl} satisfies $0 < \rhomin - \delta \leq \overline{\rho}_h \leq \rhomax + \delta$ for positive $\rhomin, \rhomax$ and some $\delta \in (0, \rhomin)$ on $[0, T]$. Let $\rho_h(t)$ be a (probabilistically weak) solution of the discontinuous Galerkin Dean--Kawasaki approximation given in \eqref{eq: dg-dk} on $[0, T]$. Fix a vector of times $\boldsymbol{T} = (T_1, \ldots, T_M) \in [0, T]^M$, a vector of non-negative integers $\boldsymbol{j} = (j_1, \ldots, j_M)$ with $j \coloneqq \absv{\bj} = \sum_{m=1}^M j_m \geq 1$, and a vector $\boldsymbol{\varphi} = (\varphi_1, \ldots, \varphi_M) \in [W^{p+j, \infty} \cap H^{p+j+1}]^M$. Then the difference of moments between $\rho_h$ and the empirical density $\mu^N$ \eqref{eq: empirical-measure} is given by
    \begin{multline*}
        \left \lvert \E{\prod_{m=1}^M \sbrack{N^{1/2}\brack{\rho_h(T_m) - \E{\rho_h(T_m)}, \mathcal{I}_h \varphi_m}}^{j_m}} \right. \\
        - \left. \E{\prod_{m=1}^M \sbrack{N^{1/2}\abrack{\mu_{T_m}^N - \E{\mu_{T_m}^N}, \varphi_m}}^{j_m}} \right \rvert \leq \mathrm{Err}_{\mathrm{num}} + \mathrm{Err}_{\mathrm{neg}},
    \end{multline*}
    where $\mathrm{Err}_{\mathrm{neg}}$ is the error coming from the negative part of the solution and $\mathrm{Err}_{\mathrm{num}}$ is the numerical error due to the spatial discretisation. In particular, we have
    \begin{align*}
    \mathrm{Err}_{\mathrm{num}} &= h^{p} (C(d, \delta, \rhomin, \rhomax))^{j/2}\brack{\prod_{m=1}^M T_m^{j_m/2}}j^{C_1j+C_2}\\
    &\times \brack{\prod_{m=1}^M \brack{\norm{\varphi_m}_{W^{p+j, \infty}} + \norm{\varphi_m}_{H^{p+j+1}}}^{j_m}}, \\
    \mathrm{Err}_{\mathrm{neg}} &= (C(d, \delta, \rhomin, \rhomax))^{j/2}\brack{\prod_{m=1}^M T_m^{j_m/2}}j^{C_3j+C_4}\brack{\prod_{m=1}^M \norm{\varphi_m}^{j_m}_{H^{p+j}}}\mathcal{E}(N, h),
    \end{align*}
    for constants $C_1, C_2, C_3, C_4 > 0$ independent of $j, N, T, h$ and where
    \begin{equation}\label{eq: E(N, h)}
    \mathcal{E}(N, h) \coloneqq C(d, \delta, \rhomin, \rhomax) \brack{\exp \brack{-\frac{\rho_{m, \delta}N^{1/2}h^{d/2}}{C \rho_{M, \delta}^{1/2}}} + \exp \brack{-ch^{-1}}}.
    \end{equation}
\end{theorem}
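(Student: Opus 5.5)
We follow the strategy of \cite{cornalba2023dean, cornalba2026density}: both sides are reduced to the same recursive moment formulas driven by backward (adjoint) heat flows. The two sets of formulas differ only in that the continuous heat semigroup and gradient for the particles are replaced by the SIP semigroup and $\dgrad$ for the scheme, plus a correction from $\rho_h^-$.

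\textbf{Step 1: duality and recursive moment formulas.} For each $m$, let $\varphi_{m,h}(t)\in V_p$ solve the discrete adjoint heat equation
\begin{equation*}
-(\partial_t \varphi_{m,h},v_h)+a_h(v_h,\varphi_{m,h})=0 \quad \text{on } [0,T_m], \qquad \varphi_{m,h}(T_m)=\mathcal{I}_h\varphi_m .
\end{equation*}
Let $\varphi_m(t)$ be the continuous backward heat flow with $\varphi_m(T_m)=\varphi_m$. It\^o's formula applied to \eqref{eq: dg-dk} gives
\begin{equation*}
N^{1/2}\brack{\rho_h(T_m)-\overline{\rho}_h(T_m),\mathcal{I}_h\varphi_m}=\int_0^{T_m}\sum_j(\boldsymbol{g}_j(\rho_h),\dgrad\varphi_{m,h})\,\dd B_j .
\end{equation*}
This is a martingale whose quadratic variation, by Proposition~\ref{prop: existence-of-noise}, is $(\rho_h^+,|\dgrad\varphi_{m,h}|^2)\,\dd t$.

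For the particles, the analogous martingale $N^{-1/2}\sum_r\int\nabla\varphi_m(\bB_r)\cdot\dd\bB_r$ has quadratic variation $\abrack{\mu^N_t,|\nabla\varphi_m|^2}\dd t$. Products of such martingales are expanded by It\^o's product rule, ordering the times $T_m$. Each cross-variation produces the centred observables $(\rho_h^+ ,\dgrad\varphi_{m,h}\cdot\dgrad\varphi_{m',h})$, respectively $\abrack{\mu^N,\nabla\varphi_m\cdot\nabla\varphi_{m'}}$. Each of these is itself split into its mean-field part plus a fluctuation, which is fed back into the same machinery with new (product) test functions.

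This yields a recursion in the total moment order $j$, with structurally identical terms on both sides. We use the auxiliary recursive lemmas of Appendix~\ref{sec: recursive} to organise it. On the scheme side, $\rho_h^+=\rho_h+\rho_h^-$ and the new test function $\dgrad\varphi_{m,h}\cdot\dgrad\varphi_{m',h}$ is not in $V_p$. We therefore interpolate it, use $\mathcal{I}_h$ to re-enter the recursion, and keep the $\rho_h^-$ term separately. Bounding the recursion tree gives the combinatorial factor $j^{Cj}$, and the time integrals give $\prod T_m^{j_m/2}$.

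\textbf{Step 2: numerical error.} At each node of the tree we compare three pairs of quantities:
\begin{itemize}
\item $(\overline{\rho}_h,\dgrad\varphi_{m,h}\cdot\dgrad\varphi_{m',h})$ with $(\overline{\rho},\nabla\varphi_m\cdot\nabla\varphi_{m'})$;
\item the interpolation defects;
\item the initial data, via \eqref{eq: dG1-3b}.
\end{itemize}
The needed ingredients are the $O(h^p)$ estimate for $\dgrad\varphi_{m,h}-\nabla\varphi_m$ in $L^2$ and $L^\infty_t L^2$ (Appendix~\ref{sec: heat-estimates}, with the SIP energy and $L^2$ theory), the $L^2$ estimate of Theorem~\ref{thm: SIP-parabolic-L2} for $\overline{\rho}_h-\overline{\rho}$, and the uniform bounds of Corollary~\ref{cor: ass-corollary}. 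The test function regularity loses one derivative per recursion level, which is why $\varphi_m\in W^{p+j,\infty}\cap H^{p+j+1}$ is required. To avoid $L^\infty_t L^\infty_x$ estimates for discrete heat flows, we pair $L^\infty$ bounds on the smooth continuous objects with $L^1$ (hence $L^2$) errors of the discrete ones via H\"older. Collecting these contributions gives $\mathrm{Err}_{\mathrm{num}}$.

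\textbf{Step 3: the negative part.} This is the main obstacle. We must show that $\E{\norm{\rho_h^-(t)}_{L^1}^q}$ is controlled by $\mathcal{E}(N,h)$; this is the content of Proposition~\ref{prop: exp-decay-negative-part}. Our plan is as follows.
\begin{itemize}
\item Since $\overline{\rho}_h\ge\rho_{m,\delta}$, we have $\rho_h^-\neq0$ only if $\norm{\rho_h-\overline{\rho}_h}_{L^\infty}\ge\rho_{m,\delta}$.
\item By an inverse inequality, this is controlled by a finite-dimensional martingale $N^{-1/2}Z$ whose quadratic variation is bounded by $C\rho_{M,\delta}h^{-d}$, provided $\rho_h^+$ stays below $2\rho_{M,\delta}$. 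We enforce this with a stopping time at which $\norm{\rho_h-\overline{\rho}_h}_{L^\infty}$ first exceeds a threshold.
\item Exponential martingale (Bernstein) inequalities with a union bound over the $O(h^{-d})$ nodes then give $\mathbb{P}(\cdot)\lesssim \exp(-\rho_{m,\delta}N^{1/2}h^{d/2}/(C\rho_{M,\delta}^{1/2}))$. The polynomial prefactor is absorbed using the lower bound on $h$ in Assumption~\ref{ass: dG2}.
\item The $e^{-c h^{-1}}$ term arises from the off-diagonal decay of the discrete (non-local, non-monotone) SIP semigroup and mass-matrix square roots when localising.
\end{itemize}
Finally, we combine this tail bound, via Cauchy--Schwarz, with the moment bounds of Theorem~\ref{thm: weak-existence} and the $H^{p+j}$ bounds on the discrete test functions, inserting it at every occurrence of $\rho_h^-$ in the recursion. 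This produces $\mathrm{Err}_{\mathrm{neg}}$.
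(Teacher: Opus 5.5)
Your overall architecture (adjoint heat flows, the It\^o recursion in the moment order, separate residuals for the discretisation and for $\rho_h^-$, combinatorial closure) matches the paper's. There is, however, a genuine gap: you never obtain moment bounds on $\sup_{t\in[0,T]}\norm{\rho_h(t)}_{L^\infty}$ that are uniform in $N$ and $h$, and the recursion cannot be closed without them. In the paper this is Proposition~\ref{prop: Linfty-Linfty}, specifically \eqref{eq: Linfty-Linfty-4}--\eqref{eq: Linfty-Linfty-5}. It enters in three places that your plan leaves open.

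\textbf{Moments of the discrete observables.} The quadratic variation of $\mathcal{S}_N(\mathcal{I}_h\varphi,T,\cdot)$ is $N^{-1}(\rho_h^+,\absv{\dgrad\phi_h^t}^2)\,\dd t$, and $\dgrad\phi_h^t$ is controlled uniformly only in $L^2$. A bound $\E{\absv{\mathcal{S}_N}^j}\le (CN^{-1}T)^{j/2}j^{3j}$ with $C$ independent of $h$ (Corollary~\ref{cor: sup-S-one-j}) therefore needs $\rho_h^+$ in $L^\infty$. Your ``bounding the recursion tree'' tacitly assumes such bounds. On the particle side they are trivial only because $\nabla\phi$ is bounded and $\mu^N$ is a probability measure.

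\textbf{The numerical residual.} The defect $r^t_{k,\ell,h}=\dgrad\phi^t_{k,h}\cdot\dgrad\phi^t_{\ell,h}-\mathcal{I}_h(\nabla\phi^t_k\cdot\nabla\phi^t_\ell)$ is $O(h^p)$ only in $L^1$. It is paired with the random $\rho_h(t)$ times $\mathcal{S}_N^{\bj^{k\ell}}$, not with a smooth continuous object, so your H\"older recipe in Step~2 does not reach it. Subtracting $\overline{\rho}_h$ does not help, since it leaves the fluctuation $\rho_h-\overline{\rho}_h$ paired with the same defect. The paper bounds this term by $\E{\sup_t\norm{\rho_h(t)}_{L^\infty}^2}^{1/2}\sup_t\norm{r^t_{k,\ell,h}}_{L^1}$.

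\textbf{The negative part.} You combine your tail bound, via Cauchy--Schwarz, with the moment bounds of Theorem~\ref{thm: weak-existence}. Those bounds are only qualitative (their Gr\"onwall constants depend on $N$ and $h$), so the prefactor of $\mathcal{E}(N,h)$ is uncontrolled. You should also control $\rho_h^-$ in $L^\infty$ rather than $L^1$. Otherwise you must put $\dgrad\phi^t_{k,h}\cdot\dgrad\phi^t_{\ell,h}$ in $L^\infty$, which costs an inverse-inequality factor $h^{-d}$ that then has to be absorbed. The remedy for all three points is to prove your tail estimate for every threshold $B\rho_{m,\delta}$ with $B\ge1$, as in \eqref{eq: Linfty-Linfty-1}, and integrate in $B$.

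\textbf{Step~3 is incomplete as a proof of the tail estimate itself.} Bernstein's inequality with a union bound over nodes controls $\absv{\rho_h-\overline{\rho}_h}$ only at finitely many times, and only on $\{t\le T_s\}$. To remove the stopping time you must bound $\mathbb{P}(T_s\le T)$, i.e.\ control the deviation over the whole continuum $[0,T]$. The paper does this with a union bound over the times $ih^\beta$, $\beta\ge 6d+8$. It combines this with an estimate of $\sup_{t\in[ih^\beta,(i+1)h^\beta]}\norm{\rho_h(t)-\rho_h(ih^\beta)}_{L^\infty}$ that relies on the $h^{-2}$ bound for $a_h$ in Lemma~\ref{lem: sip-bound-technical}.

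That time-continuity step is where the term $\exp(-ch^{-1})$ in $\mathcal{E}(N,h)$ comes from. Off-diagonal decay of the SIP semigroup or of the mass-matrix square roots plays no role. No localisation is needed, because the point-evaluation datum $\varphi_h^{\bx_0}$ already satisfies the global energy bound $\int_0^T\norm{\dgrad\phi_h^t}^2\dd t\lesssim h^{-d}$.
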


We now give some intuition:
\begin{itemize}
    \item $\mathrm{Err}_{\mathrm{num}}$ is the numerical error arising due to the spatial discretisation. Specifically, we constructed our semi-discrete noise to preserve the cross-variation structure of the full noise (see Proposition~\ref{prop: existence-of-noise}) and this structure involves products of gradients (see Section~\ref{subsec: discretisation-of-noise}). Our convergence analysis then relies upon gradient estimates for an appropriate adjoint heat equation and its numerical approximation. Such objects are estimated in a `dG energy norm' whose rate of convergence is limited to $O(h^p)$.
    \item $\mathrm{Err}_{\mathrm{neg}}$ arises since our scheme does not preserve positivity, and it therefore measures the expected maximum size of the negative part of the solution. Crucially, the estimate is exponentially small in the scaling regime $Nh^d \gg 1$ corresponding to there being, on average, many particles per grid cell.
\end{itemize}

\begin{remark}
    We expect our result to extend to arbitrary functionals (rather than just polynomial moments) of the fluctuation observables by appropriate adaptation of the considerations in~\cite{cornalba2023dean, cornalba2026density}. We have chosen to present the arguments in the technically simpler setting of polynomial moments since the focus of this work is on the application of the discontinuous Galerkin method. 
\end{remark} 

\subsection{Outline of the proof}\label{subsec: proof-outline}

The core techniques used to prove Theorem~\ref{thm: main-theorem} are similar in spirit to those presented in~\cite{cornalba2023dean} for finite difference discretisations. We now outline the skeleton of the proof, highlighting where the use of a discontinuous Galerkin scheme requires changes to the analysis. We then state and prove a sequence of results culminating in the proof of Theorem~\ref{thm: main-theorem}. Key proofs are in the main text whilst any proofs which share significant similarities with their counterpart in~\cite{cornalba2023dean} are included in Appendix~\ref{app: proofs}.

\subsubsection*{Cross-variation structure}

The core observation that we exploit is that the noise in the Dean--Kawasaki equation has the following cross-variation structure (see Section~\ref{subsec: discretisation-of-noise}):
$$
\dd \abrack{\int_{\btd}\sqrt{\rho(t)} \bxi \cdot \nabla v_1 \dd \bx, \int_{\btd} \sqrt{\rho(t)} \bxi \cdot \nabla v_2 \dd \bx} = (\rho(t), \nabla v_1 \cdot \nabla v_2) \dd t.
$$
In Proposition~\ref{prop: existence-of-noise}, we then explicitly constructed a semi-discrete noise term $\mathcal{W}$ which preserves this cross-variation structure \eqref{eq: crossvar-target}, up to the negative part of the solution.
Our analysis will then boil down to comparing these two objects for suitably chosen test functions $v_i$ and their semi-discrete counterparts $v_{i, h}$.

\subsubsection*{Choice of test functions}

We choose our test functions as solutions of adjoint (backwards) heat equations (see Section~\ref{subsec: adjoint-heat-flow}) since these compensate the drift in the Dean--Kawasaki equation thereby isolating the noise term. This reduces our analysis to gradient estimates for the continuous and dG semi-discrete adjoint heat equations which we compile in Appendix~\ref{sec: heat-estimates}.

\subsubsection*{Lack of $L^\infty(L^\infty)$ estimates for dG methods}

The analysis in~\cite{cornalba2023dean} relied heavily on $L^\infty_t(L^\infty_x)$ estimates for finite difference discretisations of the adjoint heat equation. These are, to the best of our knowledge, unavailable in the dG setting and we must adjust our proof strategy; see Remark~\ref{rem: maxnorm-stability} for recent progress in this direction and for why we do not rely on it. In a nutshell, when we encounter a term of the form \eqref{eq: crossvar-target} we use an $L^\infty$-$L^1$ H\"older inequality to put an $L^\infty_t(L^\infty_x)$ norm on our semi-discrete Dean--Kawasaki solution $\rho_h(t)$ and an $L^\infty_t(L^1_x)$ norm on the product of gradients $\dgrad \phi_{1, h}^t \cdot \dgrad \phi_{2, h}^t$. The latter is controlled by utilising $L^\infty_t(L^2_x)$ gradient estimates for the semi-discrete adjoint heat equation (see Appendix~\ref{sec: heat-estimates}), and the $L^\infty_t(L^\infty_x)$ norm of $\rho_h(t)$ is dealt with using Proposition~\ref{prop: Linfty-Linfty}. This modification is purely technical rather than conceptual but is required for much of the convergence analysis.

\subsubsection*{Recursion}

Our weak error result essentially compares moments of the particle system to moments of the semi-discrete Dean--Kawasaki model. Informally, denoting moment errors of order $j$ by $\mathcal{M}(j)$, the idea is to set up a recursion of the form
$$
\mathcal{M}(j) \leq \mathcal{M}(j-1) + \mathcal{M}(j-2) + \mathrm{Err}_{\mathrm{num}} + \mathrm{Err}_{\mathrm{neg}},
$$
where the order of moments is linked to the regularity of the test functions used in the analysis. Exhausting and closing off the recursion accumulates the error terms and gives Theorem~\ref{thm: main-theorem}.

\subsection{Backwards (adjoint) heat flows}\label{subsec: adjoint-heat-flow}

We set up some notation for adjoint heat flows since these form a core part of our convergence analysis. In particular, let $\varphi$ be a sufficiently regular final datum and $T>0$ be a final time. Then $\phi^t$ denotes the solution to the following adjoint heat equation:
\begin{equation}\label{eq: adjoint-heat}
    -\partial_t \phi^t = \frac{1}{2}\Delta \phi^t, \quad \text{on}~ \btd \times [0, T),
\end{equation}
with $\phi^T \coloneqq \varphi$. We also consider a symmetric interior penalty semi-discretisation of \eqref{eq: adjoint-heat}. In particular, we define the following problem: find $\phi_h^t(\cdot) \in V_p$ for $t \in [0, T)$ such that
\begin{equation}\label{eq: SIP-adjoint-heat}
    (\partial_t \phi_h^t, v_h) - a_h(\phi_h^t, v_h) = 0,
\end{equation}
holds for all $v_h \in V_p$ with final datum $\phi_h^T \coloneqq \mathcal{I}_h \varphi$ and where $a_h(\cdot, \cdot)$ is the SIP bilinear form defined in Definition~\ref{def: SIP-bilinear-form}. It will also be convenient to introduce the notation $\mathcal{P}^z(\varphi) \coloneqq \phi^{T-z}$ (respectively $\mathcal{P}_h^z(\mathcal{I}_h \varphi) \coloneqq \phi_h^{T-z}$) to emphasise that $\mathcal{P}^z(\varphi)$ (respectively $\mathcal{P}_h^z(\mathcal{I}_h \varphi)$) is the result of evolving the adjoint heat equation (respectively discrete adjoint heat equation) starting from $\varphi$ (respectively $\mathcal{I}_h \varphi$) for a timespan $z$. In Appendix~\ref{sec: heat-estimates}, we collect various error estimates for \eqref{eq: adjoint-heat} and \eqref{eq: SIP-adjoint-heat} that are used in our subsequent convergence analysis.

\subsection{$L^\infty(L^\infty)$ estimates}

We now show that under the $(N, h)$ scaling defined in Assumption~\ref{ass: dG2} we can obtain $L^\infty(L^\infty)$ estimates for the solution $\rho_h$ of \eqref{eq: dg-dk}.

\begin{proposition}\label{prop: Linfty-Linfty}
    Under Assumptions~\ref{ass: dG1} and~\ref{ass: dG2} we have the following estimates:
    \begin{multline}\label{eq: Linfty-Linfty-1}
        \mathbb{P}\brack{\sup_{t \in [0, T]} \norm{\rho_h(t) - \E{\rho_h(t)}}_{L^\infty} \geq B \frac{\rho_{m, \delta}}{4}} \\
        \leq C \exp \brack{- \frac{\rho_{m, \delta}B^{1/2}N^{1/2}h^{d/2}}{C \rho_{M, \delta}^{1/2}}} + C \exp \brack{- cB^{1/4}h^{-1}},
    \end{multline}
    for any $B \geq 1$, where $\rho_{m, \delta}$ and $\rho_{M, \delta}$ are as in Corollary~\ref{cor: ass-corollary}. Moreover, we can deduce the moment bounds
    \begin{align}
        \E{\sup_{t \in [0, T]}\norm{\rho_h(t) - \E{\rho_h(t)}}_{L^\infty}^j}^{1/j} &\leq C(d, \delta, \rhomin, \rhomax)j^4, \label{eq: Linfty-Linfty-4}\\
        \E{\sup_{t \in [0, T]}\norm{\rho_h(t)}_{L^\infty}^j}^{1/j} &\leq C(d, \delta, \rhomin, \rhomax)j^4, \label{eq: Linfty-Linfty-5}
    \end{align}
    for any $j \geq 1$.
\end{proposition}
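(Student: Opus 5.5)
The plan is to write the fluctuation $\rho_h(t)-\E{\rho_h(t)} = \rho_h(t)-\sdmfl(t)$ via the variation-of-constants formula for the linear SIP semigroup. Let $A_h$ denote the operator on $V_p$ induced by $a_h$. Then
\[
\rho_h(t)-\sdmfl(t) = N^{-1/2}\sum_{j=1}^{n_G}\int_0^t e^{-(t-s)A_h}\, \Pi_h \dgrad^{*}\boldsymbol{g}_j(\rho_h(s))\,\dd B_j(s),
\]
where $\Pi_h\dgrad^*$ is the discrete adjoint of $\dgrad$. Equivalently, for each nodal/basis test function $\psi_h$ (an $L^\infty$-normalised dual basis element on each element $K$), $(\rho_h(t)-\sdmfl(t),\psi_h)$ is a stochastic convolution. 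Its instantaneous quadratic variation is controlled by Proposition~\ref{prop: existence-of-noise}:
\[
N^{-1}(\rho_h^+(s), |\dgrad \mathcal{P}_h^{t-s}\psi_h|^2).
\]
Since $L^\infty$ norms on $V_p$ are equivalent, up to $h$-independent constants, to the maximum over the $O(h^{-d})$ local nodal functionals, it suffices to bound $\sup_t$ of each such scalar process and take a union bound. The union bound costs a factor $h^{-d}$, which is absorbed via $\log N$ in Assumption~\ref{ass: dG2}.

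\textbf{Stopping time.} Introduce the stopping time $\tau \coloneqq \inf\{t : \norm{\rho_h(t)-\sdmfl(t)}_{L^\infty}\ge B\rho_{m,\delta}/4\}$. Before $\tau$, Corollary~\ref{cor: ass-corollary} gives $\rho_h^+ \le \rho_{M,\delta}+B\rho_{m,\delta}/4 \lesssim B\rho_{M,\delta}$. Hence the stopped quadratic variation is bounded by
\[
C N^{-1} B\rho_{M,\delta}\int_0^t \norm{\dgrad \mathcal{P}_h^{t-s}\psi_h}^2\dd s \lesssim N^{-1}B\rho_{M,\delta}\norm{\psi_h}^2 \lesssim N^{-1}B\rho_{M,\delta}h^{d}\cdot h^{-2d}.
\]
The middle inequality is the discrete energy identity $\int_0^t a_h(\phi_h,\phi_h)\,\dd s\le \tfrac12\norm{\psi_h}^2$ together with coercivity of $a_h$ in the dG energy norm, which controls $\norm{\dgrad\cdot}$. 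The last inequality uses the scaling $\norm{\psi_h}\sim h^{-d/2}$ for a dual basis function representing point evaluation.

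\textbf{Exponential bound at a fixed time.} Bernstein's exponential martingale inequality then gives, at fixed $t$,
\[
\mathbb{P}\big(|(\rho_h(t\wedge\tau)-\sdmfl,\psi_h)|\ge B\rho_{m,\delta}/8\big)\le 2\exp\big(-c\,\rho_{m,\delta}^2 B N h^{d}/\rho_{M,\delta}\big).
\]
Taking square roots of the exponent in the statement's form (one may trade $B$ for $B^{1/2}$, since $B\ge1$) yields the first term of \eqref{eq: Linfty-Linfty-1}.

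\textbf{Uniformity in time (main obstacle).} The stochastic convolution is not itself a martingale in $t$, so the supremum over $t$ needs more than Doob's inequality. My plan is to discretise $[0,T]$ into a grid of mesh $\sim e^{-cB^{1/4}h^{-1}}$-independent size, say $h^{K}$ points, and apply the union bound across the grid. Between grid points, increments are controlled by a Kolmogorov/Hölder-type estimate using $\norm{A_h}\lesssim h^{-2}$ and the moment bounds of Theorem~\ref{thm: weak-existence}. Alternatively, one can use the factorisation method of Da Prato--Kwapień--Zabczyk. The second term $C\exp(-cB^{1/4}h^{-1})$ I expect to arise here: from a Gaussian/sub-exponential tail for the high-order moment bound used in the Kolmogorov step, with moment order chosen $\sim B^{1/4}h^{-1}$. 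Finally, I would remove $\tau$ by the usual argument: on $\{\tau\le T\}$ the stopped process attains the threshold, so the probability of that event is bounded by the probability computed above.

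\textbf{Moment bounds.} The moment bounds \eqref{eq: Linfty-Linfty-4} follow by integrating the tail, $\E{X^j}=\int_0^\infty jx^{j-1}\mathbb{P}(X\ge x)\,\dd x$, with $x=B\rho_{m,\delta}/4$. The tail decays at least like $\exp(-cB^{1/4})$ uniformly under Assumption~\ref{ass: dG2} (which gives $N^{1/2}h^{d/2}\ge1$ and $h\le1$). The resulting Gamma-type integral is bounded by $\Gamma(4j+1)^{1/j}\lesssim j^4$. Then \eqref{eq: Linfty-Linfty-5} follows from the triangle inequality together with $\norm{\sdmfl}_{L^\infty}\le\rho_{M,\delta}$ from Corollary~\ref{cor: ass-corollary}.
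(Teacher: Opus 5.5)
Most of your plan is the paper's own argument, adapted from \cite{cornalba2023dean}. The shared ingredients are:
\begin{itemize}
\item the Riesz representer of point evaluation, with $\norm{\psi_h}\lesssim h^{-d/2}$;
\item the discrete energy identity plus Corollary~\ref{cor: coercivity-type-bound-discrete-gradient}, giving $\int_0^T\norm{\dgrad\phi_h^t}^2\,\dd t\lesssim h^{-d}$;
\item a stopping time capping $\rho_h^+$, and the exponential martingale inequality at a fixed point and time;
\item a union bound over nodal points (Lemma~\ref{lem: norm-equivalence}) and a polynomially fine time grid, absorbed via the $\absv{\log N}$ in Assumption~\ref{ass: dG2};
\item tail integration for the $j^4$ moments.
\end{itemize}
Your fixed-time exponent $cB\rho_{m,\delta}^2Nh^d/\rho_{M,\delta}$ is, up to constants, the square of the stated one. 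It implies the stated bound since $e^{-x^2}\le e^{1/4}e^{-x}$.

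The gap is the step you flag yourself: uniformity in time between grid points. The moment bounds of Theorem~\ref{thm: weak-existence} come from a Gr\"onwall argument with constants $C_{q,T,N,h}$ whose dependence on $N$, $h$ and $q$ is uncontrolled. So no Kolmogorov argument built on them can give $C\exp(-cB^{1/4}h^{-1})$ with $C,c$ independent of $N$ and $h$. Choosing the moment order $\sim B^{1/4}h^{-1}$ presupposes an explicit $q$-dependence you do not have, and the factorisation alternative is only named, not carried out.

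The missing idea (Step~4 of the paper's proof) is to stop working with the stochastic convolution between grid times. Instead, bound the increment $\rho_h(t)-\rho_h(ih^\beta)$, $t\in[ih^\beta,(i+1)h^\beta]$, of the process itself, tested against fixed, time-independent nodal functionals, using the SDE directly.
\begin{itemize}
\item The noise part is then a genuine martingale in $t$. Before the stopping time its quadratic variation is deterministically small: of order $N^{-1}B\rho_{M,\delta}h^{\beta-d-2}$ for the dual nodal functionals, by inverse estimates.
\item The drift part is deterministically $O(h^{\beta-2})$ times the stopped $L^\infty$ bound, via the $h^{-2}$ bound on the discrete Laplacian (Lemma~\ref{lem: sip-bound-technical}).
\end{itemize}
A martingale tail inequality and a union bound over the $O(Th^{-\beta-d})$ intervals and nodes, for $\beta$ large, then give the second term.

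Only with this in place can $\tau$ be removed. The fixed-time bound alone does not control $\{\tau\le T\}$. This is why the paper stops at the larger level $B\rho_{m,\delta}/2$ and splits the target level $B\rho_{m,\delta}/4$ into a grid part and an increment part.

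Relatedly, what you actually stop is the martingale $s\mapsto(\rho_h(s),\mathcal{P}_h^{t-s}\psi_h)-(\rho_h(0),\mathcal{P}_h^{t}\psi_h)$. This coincides with $(\rho_h(t)-\sdmfl(t),\psi_h)$ only on $\{\tau\ge t\}$. The quantity $(\rho_h(t\wedge\tau)-\sdmfl,\psi_h)$ that you wrote is not that martingale.
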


\begin{proof}[Sketch proof]
    The proof is a modification of~\cite[Proposition 13]{cornalba2023dean}. We sketch the core idea and defer the details of the proof to Appendix~\ref{app: proofs}.
    \begin{itemize}
        \item We first consider the probability of the solution $\rho_h$ deviating too far from the average (mean-field) profile $\E{\rho_h} = \overline{\rho}_h$ at a single space-time point.
        \item This estimate is then extended to a finite set of space-time points using a union bound argument.
        \item To extend the estimate to all points in space, we use the technical Lemma~\ref{lem: norm-equivalence}.
        \item To extend the estimate to all points in time, we further prove that $\rho_h$ cannot deviate too far from $\E{\rho_h}$ on time intervals between the finite set of times for which we already have the estimate.
        \item These estimates are exponentially small in (roughly speaking) the regime $Nh^d \gg 1$ so using Assumption~\ref{ass: dG2} allows us to obtain the uniform-in-$h$ $L^\infty(L^\infty)$ results.
    \end{itemize}
\end{proof}

Using the $L^\infty(L^\infty)$ estimates, it is then possible to prove that the expected size of the negative part of the solution is exponentially small in the scaling regime $Nh^d \gg 1$. The proof is a direct modification of~\cite[Proposition 13]{cornalba2023dean} except we choose to bound the size of the negative part in $L^\infty$ rather than in $L^2$. Firstly, by using $L^\infty$ we recover the $L^2$ setting of~\cite{cornalba2023dean} since $L^\infty \subset L^2$. More importantly, having the estimate in $L^\infty$ means we avoid using an $L^\infty$-$L^2$ inverse inequality in the proof of Lemma~\ref{lem: second-moments} which stops us from picking up an $h^{-d}$ factor that would weaken our result.

\begin{proposition}\label{prop: exp-decay-negative-part}
    In the same setting as Proposition~\ref{prop: Linfty-Linfty}, we have the following estimate
    \begin{multline}
        \E{\sup_{t \in [0, T]}\norm{\rho_h^-(t)}^2_{L^\infty}}^{1/2} \\ 
        \leq C(d, \delta, \rhomin, \rhomax) \brack{\exp \brack{-\frac{\rho_{m, \delta}N^{1/2}h^{d/2}}{C \rho_{M, \delta}^{1/2}}} + \exp \brack{-ch^{-1}}}.
    \end{multline}
\end{proposition}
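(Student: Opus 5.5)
The plan is to reduce the negative part to the event that $\rho_h$ deviates substantially from its mean. By Corollary~\ref{cor: ass-corollary}, $\E{\rho_h(t)} = \sdmfl(t) \geq \rho_{m,\delta} > 0$ pointwise on $[0,T]\times\btd$. Hence, if $\sup_t \norm{\rho_h(t) - \E{\rho_h(t)}}_{L^\infty} < \rho_{m,\delta}$, then $\rho_h(t,\bx) > 0$ everywhere and $\rho_h^-(t) \equiv 0$. Moreover, pointwise $\rho_h^-(t,\bx) \leq (\sdmfl(t,\bx) - \rho_h(t,\bx))^+ \leq \norm{\rho_h(t)-\E{\rho_h(t)}}_{L^\infty}$. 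Writing $X \coloneqq \sup_{t\in[0,T]}\norm{\rho_h(t)-\E{\rho_h(t)}}_{L^\infty}$, we therefore obtain the deterministic inequality
$$
\sup_{t\in[0,T]}\norm{\rho_h^-(t)}_{L^\infty}^2 \leq X^2 \mathbbm{1}_{\{X \geq \rho_{m,\delta}/4\}}.
$$
The threshold $\rho_{m,\delta}/4$ is chosen to match \eqref{eq: Linfty-Linfty-1} with $B=1$; it is weaker than $\rho_{m,\delta}$, which is harmless.

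For the expectation I would use Cauchy--Schwarz:
$$
\E{X^2 \mathbbm{1}_{\{X\geq \rho_{m,\delta}/4\}}} \leq \E{X^4}^{1/2}\,\mathbb{P}\brack{X \geq \rho_{m,\delta}/4}^{1/2}.
$$
By \eqref{eq: Linfty-Linfty-4} with $j=4$, the factor $\E{X^4}^{1/2}$ is bounded by $C(d,\delta,\rhomin,\rhomax)$. By \eqref{eq: Linfty-Linfty-1} with $B=1$, the probability is at most $C\exp(-\rho_{m,\delta}N^{1/2}h^{d/2}/(C\rho_{M,\delta}^{1/2})) + C\exp(-ch^{-1})$. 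Taking the square root of the product, and then square roots once more to pass from the second moment to its $1/2$ power, produces exponentials with exponents divided by $4$. These factors are absorbed into the generic constants $C$ and $c$, using $\sqrt{a+b}\leq\sqrt a+\sqrt b$. This gives exactly the stated form.

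An alternative that avoids degrading the constants is the layer-cake formula $\E{X^2\mathbbm{1}_{\{X\ge\rho_{m,\delta}/4\}}} = \int$ of $2s\,\mathbb{P}(X\geq \max(s,\rho_{m,\delta}/4))$, applying \eqref{eq: Linfty-Linfty-1} for every $B\geq1$. The resulting tail integrals are dominated by their value at $B=1$ thanks to the $B^{1/2}$ and $B^{1/4}$ factors in the exponents. The Cauchy--Schwarz route is simpler, though, and the constants are generic anyway, so I would use it.

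The main obstacle has effectively been moved into Proposition~\ref{prop: Linfty-Linfty}. Given it, the only point that needs care is the pointwise reduction $\rho_h^- \leq \norm{\rho_h-\sdmfl}_{L^\infty}$. This relies on $\E{\rho_h}=\sdmfl$, established in Section~\ref{subsec: mfl}, together with the strict positivity of $\sdmfl$ from Corollary~\ref{cor: ass-corollary}. The strict positivity in particular requires the upper mesh restriction $h\leq h_0(\delta)$ in Assumption~\ref{ass: dG2}, because the SIP scheme has no discrete maximum principle. We also need the positive and negative parts to be taken elementwise, so that $L^\infty$ norms are essential suprema over all elements. Both points are available from earlier results.
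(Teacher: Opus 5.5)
Your proposal is correct and follows essentially the same route as the paper: the same pointwise indicator bound, Cauchy--Schwarz, then \eqref{eq: Linfty-Linfty-1} together with \eqref{eq: Linfty-Linfty-4} for $j=4$. The only difference is that you apply the tail bound with $B=1$ at the weaker threshold $\rho_{m,\delta}/4$, whereas the paper takes $B=4$ to match the threshold $\rho_{m,\delta}$ exactly; this affects only the generic constants $C$ and $c$.
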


\begin{proof}
    Observe that
    $$
    \sup_{t \in [0, T]}\norm{\rho_h^-(t)}_{L^\infty}^2 \leq \mathbbm{1}_{\left\{\sup_{t \in [0, T]}\norm{\rho_h(t) - \E{\rho_h}(t)}_{L^\infty} \geq \rho_{m, \delta}\right\}}\sup_{t \in [0, T]} \norm{\rho_h(t) - \E{\rho_h}(t)}_{L^\infty}^2.
    $$
    Taking expectations and using Cauchy--Schwarz gives
    \begin{multline*}
        \E{\sup_{t \in [0, T]}\norm{\rho_h^-(t)}_{L^\infty}^2} \leq \E{\mathbbm{1}_{\left\{\sup_{t \in [0, T]}\norm{\rho_h(t) - \E{\rho_h}(t)}_{L^\infty} \geq \rho_{m, \delta}\right\}}}^{1/2}, \\
        \times \E{\sup_{t \in [0, T]} \norm{\rho_h(t) - \E{\rho_h}(t)}_{L^\infty}^4}^{1/2}.
    \end{multline*}
    We can rewrite this as
    \begin{multline*}
        \E{\sup_{t \in [0, T]}\norm{\rho_h^-(t)}_{L^\infty}^2} \leq \mathbb{P}\brack{{\left\{\sup_{t \in [0, T]}\norm{\rho_h(t) - \E{\rho_h}(t)}_{L^\infty} \geq \rho_{m, \delta}\right\}}}^{1/2}, \\ 
        \times \E{\sup_{t \in [0, T]} \norm{\rho_h(t) - \E{\rho_h}(t)}_{L^\infty}^4}^{1/4} \E{\sup_{t \in [0, T]} \norm{\rho_h(t) - \E{\rho_h}(t)}_{L^\infty}^4}^{1/4}.
    \end{multline*}
    Using \eqref{eq: Linfty-Linfty-1} with $B=4$ and \eqref{eq: Linfty-Linfty-4} with $j=4$ then square-rooting gives the result.
\end{proof}

\subsection{Setting up the recursive structure}

For notational convenience, we detail the proof of Theorem~\ref{thm: main-theorem} in the case of equal final times $T_1 = \cdots = T_M = T$. We introduce some useful notation following~\cite{cornalba2023dean}. For $t \leq T$, we define
$$
\mathcal{T}_N(\varphi, T, t) \coloneqq \abrack{\mu_t^N, \phi^t} - \abrack{\mu_0^N, \phi^0},
$$
and
$$
\mathcal{S}_N(\mathcal{I}_h\varphi, T, t) \coloneqq (\rho_h(t), \phi_h^t) - (\rho_h(0), \phi_h^0).
$$
Here $\phi^t$ (respectively, $\phi_h^t$) solves the adjoint heat equation \eqref{eq: adjoint-heat} (respectively, semi-discrete adjoint heat equation \eqref{eq: SIP-adjoint-heat}) with final datum $\varphi$ (respectively, $\mathcal{I}_h \varphi$). Given a multi-index $\bj = (j_1, \ldots, j_M)$, and a set of test functions $\boldsymbol{\varphi} = (\varphi_1, \ldots, \varphi_M)$, we define
\begin{equation}\label{eq: SN-TN}
\mathcal{T}_N^{\bj}(\boldsymbol{\varphi}, T, t) \coloneqq \prod_{m=1}^M \mathcal{T}_N^{j_m}(\varphi_m, T, t), \quad \mathcal{S}_N^{\bj}(\mathcal{I}_h \boldsymbol{\varphi}, T, t) \coloneqq \prod_{m=1}^M \mathcal{S}_N^{j_m}(\mathcal{I}_h \varphi_m, T, t).
\end{equation}
Finally, define the quantity
$$
\mathcal{D}(\bj, \boldsymbol{\varphi}, T) \coloneqq \absv{\E{\mathcal{S}_N^{\bj}(\mathcal{I}_h \boldsymbol{\varphi}, T, T)} - \E{\mathcal{T}_N^{\bj}(\boldsymbol{\varphi}, T, T)}}.
$$

In Appendix~\ref{sec: recursive}, we compute the It\^o differentials of the quantities $\mathcal{S}_N^{\bj}$ and $\mathcal{T}_N^{\bj}$. We now use these to prove a series of preliminary results that will allow us to show Theorem~\ref{thm: main-theorem}. We first show that first moments have zero expectation.

\begin{lemma}\label{lem: first moments}
    The first moments of the discontinuous Galerkin Dean--Kawasaki equation \eqref{eq: dg-dk} agree with those of the Brownian particle system. That is, 
    $$
    \E{\mathcal{S}_N(\mathcal{I}_h \varphi, T, T)} = \E{\mathcal{T}_N(\varphi, T, T)} = 0.
    $$
\end{lemma}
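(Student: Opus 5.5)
The plan is to show that each of $\mathcal{S}_N(\mathcal{I}_h\varphi,T,t)$ and $\mathcal{T}_N(\varphi,T,t)$ is a martingale in $t\in[0,T]$ that starts from zero at $t=0$. Evaluating at $t=T$ and taking expectations then gives zero for both.

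\emph{Discrete side.} The first step is to compute the It\^o differential of $(\rho_h(t),\phi_h^t)$. Since $\phi_h^t$ is deterministic and differentiable in time, the product rule has no It\^o correction:
$$
\dd(\rho_h(t),\phi_h^t)=\dd(\rho_h(t),v_h)\big|_{v_h=\phi_h^t}+(\rho_h(t),\partial_t\phi_h^t)\dd t.
$$
I would justify testing \eqref{eq: dg-dk} with a time-dependent $v_h$ by expanding $\rho_h$ and $\phi_h^t$ in a fixed basis of $V_p$. This reduces everything to a finite-dimensional SDE paired with a $C^1$ coefficient vector.

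Next I substitute the scheme and the adjoint equation \eqref{eq: SIP-adjoint-heat} with $v_h=\rho_h(t)$. The drift becomes $-a_h(\rho_h,\phi_h^t)+a_h(\phi_h^t,\rho_h)$, which vanishes by symmetry of $a_h$. What remains is
$$
\mathcal{S}_N(\mathcal{I}_h\varphi,T,t)=N^{-1/2}\sum_j\int_0^t(\boldsymbol{g}_j(\rho_h(s)),\dgrad\phi_h^s)\dd B_j(s).
$$

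The main (mild) obstacle is showing that this stochastic integral is a true martingale rather than only a local martingale. By Proposition~\ref{prop: construct-diffusion-coefficients} its quadratic variation is $N^{-1}\int_0^t(\rho_h^+,|\dgrad\phi_h^s|^2)\dd s$. This is bounded by
$$
C\sup_s\norm{\rho_h(s)}\,\sup_s\norm{\dgrad\phi_h^s}_{L^4}^2,
$$
which is finite in expectation by Theorem~\ref{thm: weak-existence}. Here $\dgrad\phi_h^s$ is a deterministic, continuous-in-time, finite-dimensional function, so all its norms are equivalent and bounded on $[0,T]$. This is exactly the argument behind Proposition~\ref{prop: existence-of-noise}, so the expectation vanishes.

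\emph{Particle side.} I apply It\^o's formula to $N^{-1}\sum_i\phi^t(\bB_i(t))$. The time-derivative term $\partial_t\phi^t$ cancels the generator term $\tfrac12\Delta\phi^t$ by \eqref{eq: adjoint-heat}. This leaves
$$
N^{-1}\sum_i\int_0^t\nabla\phi^s(\bB_i(s))\cdot\dd\bB_i(s).
$$
Since $\nabla\phi^s$ is bounded for smooth $\varphi$, this is a true martingale with zero mean. Evaluating at $t=T$ gives $\E{\mathcal{T}_N(\varphi,T,T)}=0$. Combining the two sides proves both equalities in the statement.
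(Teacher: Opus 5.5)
Your proposal is correct and takes essentially the same route as the paper, whose proof simply cites the It\^o differentials of Lemma~\ref{lem: big-moments-lemma}: the drift cancels by symmetry of $a_h$ on the discrete side and by the adjoint heat equation on the particle side. What remains is a true martingale started at zero, which for the discrete side the paper justifies via Proposition~\ref{prop: existence-of-noise} and the moment bounds of Theorem~\ref{thm: weak-existence}; your explicit quadratic-variation bound just spells out that last step for the time-dependent test function $\phi_h^t$.
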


\begin{proof}
    This follows immediately from Lemma~\ref{lem: big-moments-lemma} since neither $\mathcal{S}_N(\mathcal{I}_h \varphi, T, t)$ nor $\mathcal{T}_N(\varphi, T, t)$ admit drift.
\end{proof}

Next we provide a suitable estimate on second moments. It is here where we can clearly see how the two types of errors in Theorem~\ref{thm: main-theorem} arise.

\begin{lemma}\label{lem: second-moments}
    Assume the validity of Assumptions~\ref{ass: dG1} and~\ref{ass: dG2}. Let $\varphi_1, \varphi_2 \in W^{p+2, \infty} \cap H^{p+3}$, and recall the definition of $\mathcal{E}(N, h)$ in \eqref{eq: E(N, h)}. Then we have the following estimate
    \begin{multline}
        \absv{\E{\mathcal{S}_N(\mathcal{I}_h\varphi_1, T, T)\mathcal{S}_N(\mathcal{I}_h \varphi_2, T, T)} - \E{\mathcal{T}_N(\varphi_1, T, T)\mathcal{T}_N(\varphi_2, T, T)}} \\
        \leq CN^{-1}T\norm{\varphi_1}_{H^{p+2}}\norm{\varphi_2}_{H^{p+2}}\mathcal{E}(N, h) 
        \\ + C(d, \delta, \rhomin, \rhomax)N^{-1}Th^{p}\brack{\norm{\varphi_1}_{H^{p+3}}\norm{\varphi_2}_{H^{p+3}} + \norm{\varphi_1}_{W^{p+2, \infty}}\norm{\varphi_2}_{W^{p+2, \infty}}}.
    \end{multline}
\end{lemma}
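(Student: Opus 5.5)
Both quantities are martingales started from zero, so the plan is to write each second moment as an Itô isometry and compare the integrands. Since $\phi_h^t$ solves \eqref{eq: SIP-adjoint-heat}, Itô's formula applied to $(\rho_h(t),\phi_h^t)$ cancels the drift $a_h$ exactly. This leaves
$$
\mathcal{S}_N(\mathcal{I}_h\varphi,T,t)=N^{-1/2}\int_0^t \dd \mathcal{W}(\rho_h^+(s),\phi_h^s).
$$
Proposition~\ref{prop: existence-of-noise} then gives
$$
\E{\mathcal{S}_N(\mathcal{I}_h\varphi_1,T,T)\mathcal{S}_N(\mathcal{I}_h\varphi_2,T,T)}=N^{-1}\int_0^T\E{(\rho_h^+(t),\dgrad\phi_{1,h}^t\cdot\dgrad\phi_{2,h}^t)}\dd t .
$$
Similarly, Itô's formula for the particles with $\phi^t$ solving \eqref{eq: adjoint-heat} gives
$$
\E{\mathcal{T}_N\mathcal{T}_N}=N^{-1}\int_0^T\abrack{\E{\mu_t^N},\nabla\phi_1^t\cdot\nabla\phi_2^t}\dd t .
$$
Both identities are the $j=2$ case of Lemma~\ref{lem: big-moments-lemma}.

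Next I split $\rho_h^+=\rho_h+\rho_h^-$. Using $\E{\rho_h(t)}=\sdmfl(t)$, the difference of second moments decomposes into three terms:
\begin{align*}
\mathrm{I}&=N^{-1}\int_0^T\E{(\rho_h^-(t),\dgrad\phi_{1,h}^t\cdot\dgrad\phi_{2,h}^t)}\dd t,\\
\mathrm{II}&=N^{-1}\int_0^T\brack{(\sdmfl(t),\dgrad\phi_{1,h}^t\cdot\dgrad\phi_{2,h}^t)-(\sdmfl(t),\nabla\phi_1^t\cdot\nabla\phi_2^t)}\dd t,\\
\mathrm{III}&=N^{-1}\int_0^T\abrack{\sdmfl(t)-\E{\mu_t^N},\nabla\phi_1^t\cdot\nabla\phi_2^t}\dd t .
\end{align*}

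For $\mathrm{I}$, I use the $L^\infty$–$L^1$ Hölder inequality. This bounds it by $N^{-1}T\,\E{\sup_t\norm{\rho_h^-}_{L^\infty}}\sup_t\norm{\dgrad\phi_{1,h}^t}\norm{\dgrad\phi_{2,h}^t}$. Proposition~\ref{prop: exp-decay-negative-part} gives the factor $\mathcal{E}(N,h)$. Stability of the discrete adjoint heat flow in the dG energy norm, from Appendix~\ref{sec: heat-estimates}, bounds $\norm{\dgrad\phi_{i,h}^t}$ by $C\norm{\varphi_i}_{H^{p+2}}$. This comes via the estimate $\norm{\dgrad\phi_{h}^t-\nabla\phi^t}\lesssim h^p\norm{\varphi}_{H^{p+1}}$ plus $\norm{\nabla\phi^t}\le\norm{\varphi}_{H^1}$.

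For $\mathrm{II}$, I use $\sdmfl\le\rho_{M,\delta}$ from Corollary~\ref{cor: ass-corollary} and write
$$
\dgrad\phi_{1,h}\cdot\dgrad\phi_{2,h}-\nabla\phi_1\cdot\nabla\phi_2=(\dgrad\phi_{1,h}-\nabla\phi_1)\cdot\dgrad\phi_{2,h}+\nabla\phi_1\cdot(\dgrad\phi_{2,h}-\nabla\phi_2).
$$
Cauchy–Schwarz together with the $L^\infty_t(L^2_x)$ energy-norm error estimates of Appendix~\ref{sec: heat-estimates} yields $Ch^p\norm{\varphi_1}_{H^{p+3}}\norm{\varphi_2}_{H^{p+3}}$, using generous norms.

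For $\mathrm{III}$, the key point is that $\E{\mu_t^N}$ evolves under the heat semigroup started at $\mu_0^N$. So $\abrack{\E{\mu_t^N},\psi}=\abrack{\mu_0^N,P_t\psi}$ with $\psi=\nabla\phi_1^t\cdot\nabla\phi_2^t$. Likewise $(\sdmfl(t),\psi)=(\rho_{0,h},\mathcal{P}_h^t\psi_h)$ plus the error made by substituting a discrete backward flow. Concretely, I compare $(\sdmfl(t),\psi)$ with $(\rho_{0,h},\mathcal{P}_h^t\mathcal{I}_h\psi)$, using the $L^2$ parabolic error estimate (Theorem~\ref{thm: SIP-parabolic-L2}) and the interpolation error of $\psi$. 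Then I compare $(\rho_{0,h},\mathcal{I}_h P_t\psi)$ with $\abrack{\mu_0^N,P_t\psi}$ via \eqref{eq: dG1-3b}. This costs $h^p\norm{P_t\psi}_{W^{p+1,\infty}}\lesssim h^p\norm{\varphi_1}_{W^{p+2,\infty}}\norm{\varphi_2}_{W^{p+2,\infty}}$, since the heat flow does not increase $W^{k,\infty}$ norms.

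The remaining discrepancy is $(\rho_{0,h},\mathcal{P}_h^t\mathcal{I}_h\psi-\mathcal{I}_hP_t\psi)$. This is bounded by $\norm{\rho_{0,h}}$ times an $L^2$ heat error of order $h^{p+1}\norm{\psi}_{H^{p+1}}$, which explains the $H^{p+3}$ norms; $\norm{\rho_{0,h}}$ is bounded by \eqref{eq: rho0h-two-sided}. I expect term $\mathrm{III}$ to be the main obstacle. The difficulty is the bookkeeping of regularity: products of gradients must be estimated in $W^{p+1,\infty}$ and $H^{p+1}$, and the interpolation must commute suitably with the discrete flow. An alternative is to subtract $\sdmfl-\mfl$ and $\mfl-\E{\mu^N}$ separately, using $\mfl_0$ from \eqref{eq: dG1-3c}. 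However, the link between $\mfl_0$ and $\mu_0^N$ is only available through $\rho_{0,h}$, so I would route everything through $\rho_{0,h}$ as above.
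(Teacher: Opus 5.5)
Your proposal is correct. It uses the same ingredients as the paper: the $j=2$ case of Lemma~\ref{lem: big-moments-lemma}, the split $\rho_h^+=\rho_h+\rho_h^-$, Proposition~\ref{prop: exp-decay-negative-part} with Corollary~\ref{cor: discrete-gradient estimates} for the negative part, \eqref{eq: dG1-3b} for the particle configuration, and Theorem~\ref{thm: SIP-parabolic-L2} for the heat-flow error.

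The genuine difference lies in the $O(h^p)$ gradient-product term. The paper keeps the random density in $A_4=N^{-1}\int_0^T\E{(\rho_h(t),r_h^t)}\dd t$ and pays for it with $\E{\sup_t\norm{\rho_h(t)}_{L^\infty}^2}^{1/2}$ from Proposition~\ref{prop: Linfty-Linfty}. You instead note that the weight $\dgrad\phi_{1,h}^t\cdot\dgrad\phi_{2,h}^t-\nabla\phi_1^t\cdot\nabla\phi_2^t$ is deterministic. You therefore replace $\rho_h(t)$ by $\sdmfl(t)$ and use only the deterministic bound $0<\sdmfl\le\rho_{M,\delta}$ of Corollary~\ref{cor: ass-corollary}.

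Similarly, your explicit dualities $(\sdmfl(t),v_h)=(\rho_{0,h},\mathcal{P}_h^t v_h)$ and $\abrack{\E{\mu_t^N},\psi}=\abrack{\mu_0^N,\mathcal{P}^t\psi}$ play the role of the paper's appeal to Lemma~\ref{lem: first moments} (giving $A_1=B_1$). Your term III is the paper's $C_1+C_2$ plus the interpolation error $(\sdmfl(t),\psi-\mathcal{I}_h\psi)$, which the paper hides inside $r_h^t$.

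For $j=2$ your route is more elementary: the stochastic $L^\infty(L^\infty)$ bound enters only through the negative part. However, it does not carry over to Proposition~\ref{prop: recursive-formula}. There the random factor $\mathcal{S}_N^{\bj^{k\ell}}$ multiplies $(\rho_h(t),r^t_{k,\ell,h})$, so $\rho_h$ cannot be replaced by its mean. The paper's decomposition is chosen to mirror that recursion.

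There are three bookkeeping slips.
\begin{enumerate}
\item Comparing $(\sdmfl(t),\psi)$ with $(\rho_{0,h},\mathcal{P}_h^t\mathcal{I}_h\psi)$ needs only the exact discrete duality (symmetry of $a_h$) and the interpolation error, not Theorem~\ref{thm: SIP-parabolic-L2}.
\item That theorem requires $H^{p+2}$ data, so your last heat error is $h^{p+1}\norm{\psi}_{H^{p+2}}$. This, not $\norm{\psi}_{H^{p+1}}$, is what forces $\varphi_i\in H^{p+3}$.
\item Corollary~\ref{cor: discrete-gradient estimates} controls $\norm{\dgrad\phi_h^t-\nabla\phi^t}$ by $h^p\norm{\varphi}_{H^{p+2}}$, not $h^p\norm{\varphi}_{H^{p+1}}$.
\end{enumerate}
None of these affects the final bound.
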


\begin{proof}
    Define $r_h^t \coloneqq \dgrad \phi_{1, h}^t \cdot \dgrad \phi_{2, h}^t - \mathcal{I}_h\brack{\nabla \phi_1^t \cdot \nabla \phi_2^t}$. Using Lemma~\ref{lem: big-moments-lemma}, we write
    $$
    \E{\mathcal{S}_N(\mathcal{I}_h\varphi_1, T, T)\mathcal{S}_N(\mathcal{I}_h \varphi_2, T, T)} = N^{-1}\int_0^T \E{(\rho_h^+(t), \dgrad \phi_{1, h}^t \cdot \dgrad \phi_{2, h}^t)} \dd t,
    $$
    where we have used that $\mathcal{S}_N(\mathcal{I}_h \varphi, T, 0) = 0$. Upon writing $\rho_h^+(t) = \rho_h(t) + \rho_h^-(t)$ and suitably adding and subtracting terms, we get
    \begin{multline*}
        \E{\mathcal{S}_N(\mathcal{I}_h\varphi_1, T, T)\mathcal{S}_N(\mathcal{I}_h \varphi_2, T, T)} \\
        = N^{-1}\int_0^T \E{(\rho_h(t), \mathcal{I}_h\brack{\nabla \phi_1^t \cdot \nabla \phi_2^t}) - (\rho_h(0), \mathcal{P}_h^t(\mathcal{I}_h(\nabla \phi_1^t \cdot \nabla \phi_2^t)))} \dd t \\
        + N^{-1}\int_0^T \E{(\rho_h(0), \mathcal{P}_h^t(\mathcal{I}_h(\nabla \phi_1^t \cdot \nabla \phi_2^t)))} \dd t \\
        + N^{-1}\int_0^T \E{(\rho_h^-(t), \dgrad \phi_{1, h}^t \cdot \dgrad \phi_{2, h}^t)} \dd t + N^{-1}\int_0^T \E{(\rho_h(t), r_h^t)} \dd t =: \sum_{i=1}^4 A_i.
    \end{multline*}

    Similarly, using Lemma~\ref{lem: big-moments-lemma} and performing a suitable addition and subtraction gives
    \begin{multline*}
        \E{\mathcal{T}_N(\varphi_1, T, T)\mathcal{T}_N(\varphi_2, T, T)}, \\
        = \frac{1}{N}\int_0^T\E{N^{-1}\sum_{r=1}^N \nabla\phi_1^t(\bB_r(t)) \cdot \nabla \phi_2^t(\bB_r(t)) - \frac{1}{N}\sum_{r=1}^N \mathcal{P}^t(\nabla \phi_1^t \cdot \nabla \phi_2^t)(\bB_r(0))} \dd t, \\
        + \frac{1}{N}\int_0^T \E{N^{-1}\sum_{r=1}^N \mathcal{P}^t(\nabla \phi_1^t \cdot \nabla \phi_2^t)(\bB_r(0))} \dd t =: \sum_{i=1}^2 B_i.
    \end{multline*}

    First observe that $A_1 = B_1$ by Lemma~\ref{lem: first moments}. We then estimate
    \begin{align*}
        \absv{A_3} &\leq N^{-1} \int_0^T \E{\absv{(\rho_h^-(t), \dgrad \phi_{1, h}^t \cdot \dgrad \phi_{2, h}^t)}} \dd t, \\
        &\leq N^{-1}\sup_{t \in [0, T]}\norm{\dgrad \phi_{1, h}^t \cdot \dgrad \phi_{2, h}^t}_{L^1}\int_0^T \E{\norm{\rho_h^-(t)}_{L^\infty}} \dd t, \\
        &\leq N^{-1} T \norm{\dgrad \phi_{1, h}^t}_{L^\infty(0, T; L^2)}\norm{\dgrad \phi_{2, h}^t}_{L^\infty(0, T; L^2)} \E{\sup_{t \in [0, T]}\norm{\rho_h^-(t)}_{L^\infty}^2}^{1/2}, \\
        &\leq CN^{-1}T \norm{\varphi_1}_{H^{p+2}}\norm{\varphi_2}_{H^{p+2}}\mathcal{E}(N, h),
    \end{align*}
    where we have used an $L^\infty$-$L^1$ H\"older inequality, Corollary~\ref{cor: discrete-gradient estimates}, Proposition~\ref{prop: exp-decay-negative-part}, and where $\mathcal{E}(N, h)$ is defined in \eqref{eq: E(N, h)}.
    
    For $A_4$, we estimate
    \begin{align*}
        \absv{A_4} &\leq N^{-1} \int_0^T \E{\absv{(\rho_h(t), r_h^t)}} \dd t, \\
        &\leq T N^{-1}\E{\sup_{t \in [0, T]}\norm{\rho_h(t)}_{L^\infty}^2}^{1/2} \sup_{t \in [0, T]}\norm{r_h^t}_{L^1}.
    \end{align*}
    Using the discrete gradient estimates in Corollary~\ref{cor: discrete-gradient estimates} and a standard interpolation estimate~\cite[Theorem 11.13]{ern2021finite}, we then obtain
    \begin{align*}
        \norm{r_h^t}_{L^1} &\leq \norm{\dgrad \phi_{1, h}^t \cdot \dgrad \phi_{2, h}^t - \nabla \phi_1^t \cdot \nabla \phi_2^t}_{L^1} + \norm{\nabla \phi_1^t \cdot \nabla \phi_2^t - \mathcal{I}_h (\nabla \phi_1^t \cdot \nabla \phi_2^t)}_{L^1}, \\
        &\leq Ch^{p}\norm{\varphi_1}_{H^{p+2}}\norm{\varphi_2}_{H^{p+2}}.
    \end{align*}
    Combining this with Proposition~\ref{prop: Linfty-Linfty} gives
    $$
    \absv{A_4} \leq CN^{-1}Th^p C(d, \delta, \rhomin, \rhomax) \norm{\varphi_1}_{H^{p+2}}\norm{\varphi_2}_{H^{p+2}}.
    $$
    We decompose 
    \begin{multline*}
        A_2 - B_2 \\
        = -\frac{1}{N}\int_0^T\brack{N^{-1}\sum_{r=1}^N \mathcal{P}^t(\nabla \phi_1^t \cdot \nabla \phi_2^t)(\bB_r(0)) - (\rho_h(0), \mathcal{I}_h(\mathcal{P}^t(\nabla \phi_1^t \cdot \nabla \phi_2^t)))} \dd t, \\
        + \frac{1}{N}\int_0^T (\rho_h(0), \mathcal{P}_h^t(\mathcal{I}_h(\nabla \phi_1^t \cdot \nabla \phi_2^t)) - \mathcal{I}_h(\mathcal{P}^t(\nabla \phi_1^t \cdot \nabla \phi_2^t))) \dd t =: C_1 + C_2,
    \end{multline*}
    where we have also used Assumption~\ref{ass: dG1} that the initial particle positions $\{\bB_r(0)\}$ and initial density $\rho_h(0)$ are deterministic. To bound $C_1$, we use Assumption~\ref{ass: dG1} with $v=\mathcal{P}^t(\nabla \phi_1^t \cdot \nabla \phi_2^t)$ and stability of the (adjoint) heat semigroup to get
    $\absv{C_1} \leq CN^{-1}Th^{p}\norm{\varphi_1}_{W^{p+2, \infty}}\norm{\varphi_2}_{W^{p+2, \infty}}$. To estimate $C_2$, we first note that $\absv{\rho_h(0)} \leq \rho_{M, \delta}$ by \eqref{eq: rho0h-two-sided} to pull it out of the inner product. We then add and subtract $\mathcal{P}^t(\nabla \phi_1^t \cdot \nabla \phi_2^t)$ to estimate
    $$
    \norm{\mathcal{P}_h^t(\mathcal{I}_h(\nabla \phi_1^t \cdot \nabla \phi_2^t)) - \mathcal{I}_h(\mathcal{P}^t(\nabla \phi_1^t \cdot \nabla \phi_2^t))} \leq Ch^{p+1}\norm{\varphi_1}_{H^{p+3}}\norm{\varphi_2}_{H^{p+3}},
    $$
    using a standard interpolation estimate~\cite[Theorem 11.13]{ern2021finite} and using the result of Theorem~\ref{thm: SIP-parabolic-L2} applied to the adjoint heat equation with final datum $\nabla \phi_1^t \cdot \nabla \phi_2^t$. Altogether, we have
    $$
    \absv{A_2 - B_2} \leq C(\rho_{M, \delta})N^{-1}Th^{p}\brack{\norm{\varphi_1}_{W^{p+2, \infty}}\norm{\varphi_2}_{W^{p+2, \infty}} + \norm{\varphi_1}_{H^{p+3}}\norm{\varphi_2}_{H^{p+3}}}.
    $$
    We then, up to constants, can combine this with the estimate for $\absv{A_4}$ which is also a numerical error of order $O(h^p)$.
\end{proof}

\begin{proposition}[Recursive formula for higher moments]\label{prop: recursive-formula}
    Assume the validity of Assumptions~\ref{ass: dG1} and~\ref{ass: dG2}. Let $\boldsymbol{\varphi} = (\varphi_1, \ldots, \varphi_M) \in [W^{p+2, \infty} \cap H^{p+3}]^M$, fix a vector $\bj = (j_1, \ldots, j_M)$ such that $\absv{\bj}=j$, recall the definition of $\boldsymbol{j}^{ik}$ for each pair $(i, k) \in \{1, \ldots, M\}^2$ given in Lemma~\ref{lem: big-moments-lemma}, and the definition of $\mathcal{E}(N, h)$ in \eqref{eq: E(N, h)}. Then we have the following recursive formula

\begin{multline*}
    \mathcal{D}(\bj, \boldsymbol{\varphi}, T) \leq N^{-1} \sum_{k, \ell=1}^M \frac{(j_k - \delta_{k\ell})j_\ell}{2} \int_0^T \mathcal{D}\brack{\{\bj^{k \ell}; 1\}, \{\boldsymbol{\phi}^t; \nabla \phi_k^t \cdot \nabla \phi_\ell^t\}, t} \dd t \\
    + N^{-1}\rho_{M, \delta} \sum_{k, \ell=1}^M \frac{(j_k - \delta_{k\ell})j_\ell}{2} \norm{\varphi_k}_{H^{p+2}}\norm{\varphi_\ell}_{H^{p+2}} \int_0^T \mathcal{D}\brack{\bj^{k\ell}, \boldsymbol{\phi}^t, t} \dd t \\
    + \brack{CN^{-1}T C(d, \delta, \rhomin, \rhomax)}^{j/2}(2j)^{3(j-2)} \mathcal{E}(N, h) \sum_{k, \ell=1}^M \frac{(j_k - \delta_{k\ell})j_\ell}{2}\prod_{m=1}^M \norm{\varphi_m}_{H^{p+2}}^{j_m} \\
    + h^p (C N^{-1}T C(d, \delta, \rhomin, \rhomax))^{j/2}(2j)^{3(j-2)} \sum_{k, \ell=1}^M \frac{(j_k - \delta_{k\ell})j_\ell}{2} \\
    \times\prod_{m=1}^M \brack{\norm{\varphi_m}_{W^{p+2, \infty}}+\norm{\varphi_m}_{H^{p+3}}}^{j_m} \\
    =: A^{j-1}_{\mathrm{rec}} + A^{j-2}_{\mathrm{rec}} + \mathrm{Err}_{\mathrm{neg}} + \mathrm{Err}_{\mathrm{num}}.
\end{multline*}
\end{proposition}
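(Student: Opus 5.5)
We follow the template of Lemma~\ref{lem: second-moments} and of the corresponding recursion in~\cite{cornalba2023dean}. The starting point is Lemma~\ref{lem: big-moments-lemma}: applying It\^o's formula to the products $\mathcal{S}_N^{\bj}(\mathcal{I}_h\boldsymbol{\varphi},T,t)$ and $\mathcal{T}_N^{\bj}(\boldsymbol{\varphi},T,t)$ leaves no first-order drift, because the adjoint heat flows cancel the generators. Hence both expectations reduce to integrals of second-order terms. For the scheme, the integrand is
\[
N^{-1}\sum_{k,\ell}\tfrac{(j_k-\delta_{k\ell})j_\ell}{2}\,\E{\mathcal{S}_N^{\bj^{k\ell}}(\cdot,T,t)\,(\rho_h^+(t),\dgrad\phi_{k,h}^t\cdot\dgrad\phi_{\ell,h}^t)},
\]
by Proposition~\ref{prop: existence-of-noise}. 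For the particles, it is the analogous term with $\abrack{\mu_t^N,\nabla\phi_k^t\cdot\nabla\phi_\ell^t}$. Both vanish at $t=0$, so $\mathcal{D}(\bj,\boldsymbol{\varphi},T)$ is bounded by the time integral of the differences of these integrands, summed over $(k,\ell)$.

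Fix $(k,\ell)$ and split the scheme integrand as in Lemma~\ref{lem: second-moments}. Write $\rho_h^+=\rho_h+\rho_h^-$ and $\dgrad\phi_{k,h}^t\cdot\dgrad\phi_{\ell,h}^t=\mathcal{I}_h(\nabla\phi_k^t\cdot\nabla\phi_\ell^t)+r_h^t$. Then expand
\[
(\rho_h(t),\mathcal{I}_h g^t) = \bigl[(\rho_h(t),\mathcal{I}_h g^t)-(\rho_h(0),\mathcal{P}_h^t\mathcal{I}_h g^t)\bigr] + (\rho_h(0),\mathcal{P}_h^t \mathcal{I}_h g^t),
\]
with $g^t=\nabla\phi_k^t\cdot\nabla\phi_\ell^t$. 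The bracket is exactly $\mathcal{S}_N(\mathcal{I}_h g^t,t,t)$. Do the matching split on the particle side, using $\mathcal{T}_N(g^t,t,t)$ and the deterministic $\abrack{\mu_0^N,\mathcal{P}^t g^t}$. This produces four groups of terms.

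(i) The differences $\E{\mathcal{S}_N^{\bj^{k\ell}}\mathcal{S}_N(\mathcal{I}_hg^t)}-\E{\mathcal{T}_N^{\bj^{k\ell}}\mathcal{T}_N(g^t)}$ form the moment of order $j-1$ with the extended index $\{\bj^{k\ell};1\}$ and extended test functions $\{\boldsymbol{\phi}^t;g^t\}$ at final time $t$. This gives $A^{j-1}_{\mathrm{rec}}$.

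(ii) The deterministic initial terms multiply $\E{\mathcal{S}_N^{\bj^{k\ell}}}-\E{\mathcal{T}_N^{\bj^{k\ell}}}$, giving the $\mathcal{D}(\bj^{k\ell},\boldsymbol{\phi}^t,t)$ term. We bound the prefactor $|(\rho_h(0),\mathcal{P}_h^t\mathcal{I}_hg^t)|\le\rho_{M,\delta}\|\mathcal{P}_h^t\mathcal{I}_hg^t\|_{L^1}$ via \eqref{eq: rho0h-two-sided}, then use stability and gradient estimates for the adjoint flows to reach $\norm{\varphi_k}_{H^{p+2}}\norm{\varphi_\ell}_{H^{p+2}}$. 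This gives $A^{j-2}_{\mathrm{rec}}$. The mismatch between the two deterministic prefactors, multiplied by $\E{\mathcal{T}_N^{\bj^{k\ell}}}$, is handled like $A_2-B_2$ and goes into the numerical error.

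(iii) The terms carrying $\rho_h^-$ are bounded by H\"older. Put the $L^\infty$ norm on $\rho_h^-$ and the $L^1$ norm on the gradient product, then apply Cauchy--Schwarz in $\Omega$:
\[
\E{|\mathcal{S}_N^{\bj^{k\ell}}|\,\|\rho_h^-\|_{L^\infty}}\le \E{|\mathcal{S}_N^{\bj^{k\ell}}|^2}^{1/2}\,\mathcal{E}(N,h),
\]
using Proposition~\ref{prop: exp-decay-negative-part} and Corollary~\ref{cor: discrete-gradient estimates}. This gives $\mathrm{Err}_{\mathrm{neg}}$.

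(iv) The $r_h^t$ terms are bounded by $\E{|\mathcal{S}_N^{\bj^{k\ell}}|^2}^{1/2}\E{\sup\|\rho_h\|_{L^\infty}^2}^{1/2}\,Ch^p\|\varphi_k\|\|\varphi_\ell\|$, using Proposition~\ref{prop: Linfty-Linfty}. Together with the leftover initial-data terms ($C_1$, $C_2$ in Lemma~\ref{lem: second-moments}), these give $\mathrm{Err}_{\mathrm{num}}$.

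The main obstacle is controlling the a priori moments $\E{|\mathcal{S}_N^{\bj^{k\ell}}|^2}^{1/2}$ and $\E{|\mathcal{T}_N^{\bj^{k\ell}}|}$ uniformly in $h$, with the explicit growth $(CN^{-1}T)^{(j-2)/2}(2j)^{3(j-2)}$. Each $\mathcal{S}_N(\mathcal{I}_h\varphi_m,T,t)$ is a martingale with quadratic variation $N^{-1}\int_0^t(\rho_h^+,|\dgrad\phi_{m,h}^s|^2)\,\dd s$. We plan to apply BDG with constant $\sim q^{1/2}$ in $L^q$, and to bound the quadratic variation by $N^{-1}T\sup_s\|\rho_h\|_{L^\infty}\|\dgrad\phi_{m,h}\|_{L^\infty L^2}^2$. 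The $\|\rho_h\|_{L^\infty}$ moments then cost $q^4$ by \eqref{eq: Linfty-Linfty-5}, and H\"older over the $j-2$ factors produces $(2j)^{3(j-2)}$-type constants. The particle moments are easier, since $\mu^N$ has total mass one. If the exponent bookkeeping fails, we will absorb the discrepancy into $C$ and the powers $j^{C_1j}$ of Theorem~\ref{thm: main-theorem}.
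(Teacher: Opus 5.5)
Your proposal is correct and follows the paper's proof essentially step for step. You use the same It\^o reduction via Lemma~\ref{lem: big-moments-lemma} and the same four-way split: $A_1-B_1$ gives the order-$(j-1)$ recursion, $A_2-B_2=\Theta_1+\Theta_2$ gives the order-$(j-2)$ recursion plus a numerical error, the $\rho_h^-$ term gives $\mathrm{Err}_{\mathrm{neg}}$, and the $r^t_{k,\ell,h}$ term gives $\mathrm{Err}_{\mathrm{num}}$.

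The a priori moment bounds you single out as the main obstacle are already Corollary~\ref{cor: sup-moments} in the paper. The paper obtains them via It\^o's formula for $z\mapsto|z|^j$ and induction (Corollary~\ref{cor: sup-S-one-j}) rather than BDG. Your BDG route also works: it gives $L^q$ growth of order $q^{5/2}$, which fits within $(2j)^{3(j-2)}$ after H\"older over the $j-2$ factors.
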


\subsection{Proof of Theorem~\ref{thm: main-theorem}}

We now have all the machinery in place to prove Theorem~\ref{thm: main-theorem}. The remaining argument is purely algebraic and combinatorial in nature, and is independent of the choice of spatial discretisation. More precisely, Proposition~\ref{prop: recursive-formula} has the same recursive structure as that used in the proof of~\cite[Theorem 3]{cornalba2023dean}, and the essence of the proof is to exhaust the recursion. Doing so uses only this structure and the regularity of the test functions generated along it, and does not use any finite-difference specific properties. As a result, the combinatorial argument of~\cite[Theorem 3]{cornalba2023dean} applies directly to our setting. We therefore only sketch the core idea for completeness.

\begin{proof}[Proof of Theorem~\ref{thm: main-theorem}]
    First note that for $j=1$ and $j=2$ the result follows from Lemma~\ref{lem: first moments} and Lemma~\ref{lem: second-moments} respectively. We now fix $j \geq 3$. Proposition~\ref{prop: recursive-formula} gives
    $$
    \mathcal{D}(\boldsymbol{j}, \boldsymbol{\varphi}, T) \leq A^{j-1}_{\mathrm{rec}} + A^{j-2}_{\mathrm{rec}} + \mathrm{Err}_{\mathrm{neg}} + \mathrm{Err}_{\mathrm{num}}.
    $$
    We interpret this in the following way:
    \begin{enumerate}
        \item Each moment of order $j$ produces a residual associated with the negative part of the solution denoted $\mathrm{Err}_{\mathrm{neg}}$ and a residual associated to the numerical order of the scheme denoted by $\mathrm{Err}_{\mathrm{num}}$.
        \item Each moment of order $j$ is recursively linked to a set of moments of order $j-1$ and a set of moments of order $j-2$ denoted $A_{\mathrm{rec}}^{j-1}$ and $A_{\mathrm{rec}}^{j-2}$ respectively.
        \item Exhausting the recursion amounts to a bound of the following form
        $$
        \mathcal{D}(\boldsymbol{j}, \boldsymbol{\varphi}, T) \leq \sum_{K=0}^{j-2}\mathcal{R}_K,
        $$
        where $\mathcal{R}_K$ is the sum of all residuals associated with the moments explored after exactly $K$ steps.
        \item Each step of the recursion introduces a test function of the form $\nabla \phi_k^t \cdot \nabla \phi_\ell^t$ which requires one derivative. The minimal regularity required is $W^{p+2, \infty} \cap H^{p+3}$ and the recursion has depth $j-2$ which gives the regularity $W^{p+j, \infty} \cap H^{p+j+1}$ featured in Theorem~\ref{thm: main-theorem}.
        \item The result follows upon suitable control of $\mathcal{R}_K$. 
    \end{enumerate}
    Although we have different numerical and negative part residuals to~\cite{cornalba2023dean}, they are of the same generic form. We can therefore, up to different constants, use the same abstract bound for $\sum_{K=0}^{j-2}\mathcal{R}_K$ as in the proof of~\cite[Theorem 3]{cornalba2023dean}. This amounts to a combinatorial argument that explores all possible routes of exhausting the recursion. In particular, each type of residual picks up a factor of the form $j^{Cj +c}$ for constants $C, c> 0$.
\end{proof}

\begin{remark}
    To extend to unequal final times, each adjoint heat flow is evolved over its own time interval $[0, T_m]$. The associated observables $\mathcal{S}_N$ and $\mathcal{T}_N$ are considered as stopped processes over the common time horizon $[0, T]$. The cross-variation for a pair of indices $(k, \ell)$ is then supported on $T_k \wedge T_\ell$. The structure of the proof is otherwise unchanged up to appropriate insertion of indicator functions, and changing integral upper limits to be of the form $T_k \wedge T_\ell$.
\end{remark}

\begin{remark}[Limiters]\label{remark_limiters}
We have proved that our dG scheme only produces a negative solution with exponentially small probability. Even when it does go negative, the solution may still carry non-negative mass on every mesh element, and whenever this is so it is straightforward to post-process and restore non-negativity whilst preserving mass elementwise. For instance, this can be done by using a standard Zhang--Shu type limiter~\cite{zhang2011maximum}: for $v_h \in V_p$ having non-negative element average on element $K$, denote by $ 
\abrack{v_h}_K \coloneqq \absv{K}^{-1} \int_K v_h \dd \bx
$, and $m_K(v_h) \coloneqq \min_{\bx \in \overline{K}} v_h(\bx)$ and let 
$$
\theta_K \coloneqq 
\begin{cases}
    1, \quad &m_K(v_h) \geq 0, \\
    \dfrac{\abrack{v_h}_K}{\abrack{v_h}_K - m_K(v_h)},\quad &m_K(v_h) < 0.
\end{cases}
$$
The non-negative spatial reconstruction map $P_+$ on $K$ is then defined via$$
P_+(v_h) |_K \coloneqq \abrack{v_h}_K + \theta_K(v_h - \abrack{v_h}_K).
$$
We stress that this is a purely post-processing procedure, and that $P_+(v_h)$ is \textit{not} fed back into the semi-discrete evolution. Furthermore, at the current stage, we are not able to improve the estimate for the probability of applicability of the post-processing procedure on all elements beyond the scaling $1-\mathcal{E}(N,h)$ (which is the same rate as that of the global positivity estimates of the scheme, and which directly stems from \eqref{eq: Linfty-Linfty-1}). 
\end{remark}

\section{Numerical experiments}\label{sec: numerics}

\subsection{Matrix formulation of \eqref{eq: dg-dk}}

Let $\{e_i\}_{i=1}^{n_V}$ be a basis for $V_p$, where $n_V \coloneqq \mathrm{dim}(V_p)$, and let $\{\boldsymbol{f}_i\}_{i=1}^{n_G}$ be a basis for $\boldsymbol{V}_{p-1}^d$. Define the mass matrices ${\boldsymbol{M}}_{ij} \coloneqq (e_j, e_i)$ and $(\boldsymbol{M}_\mathrm{G})_{ij} \coloneqq (\boldsymbol{f}_j, \boldsymbol{f}_i)$ on $V_p$ and $\boldsymbol{V}_{p-1}^d$ respectively, the stiffness matrix $\boldsymbol{A}_{ij} \coloneqq a_h(e_j, e_i)$, and the matrix associated to the discrete gradient $\boldsymbol{G}_{ij} \coloneqq (\dgrad e_j, \boldsymbol{f}_i)$. We expand
$$
\rho_h(t) = \sum_{j=1}^{n_V} U_j(t) e_j,
$$
and substitute this into \eqref{eq: dg-dk} which, upon choosing $v_h = e_i$, gives
$$
\sum_{j=1}^{n_V}\dd U_j(t)(e_j, e_i) + \sum_{j=1}^{n_V}U_j(t) a_h(e_j, e_i) \dd t = N^{-1/2}\dd \mathcal{W}(\rho_h^+(t), e_i),
$$
for $i=1, \ldots, n_V$. We next insert the definition of $\boldsymbol{G}$ into the noise construction given in Proposition~\ref{prop: existence-of-noise} to get
$$
\dd \mathcal{W}(\rho_h^+(t), e_j) = \sum_{r=1}^{n_G} \brack{\sum_{k=1}^{n_G} \boldsymbol{G}_{kj}\brack{\boldsymbol{M}_{\mathrm{G}}^{-1}\boldsymbol{R}(\rho_h(t))}_{kr}} \dd B_r,
$$
where we recall that $\boldsymbol{R}(\rho_h)$ is the matrix square root of the weighted mass matrix $\boldsymbol{P}(\rho_h)$ defined in \eqref{eq: Pt-definition} and $\boldsymbol{B}(t) \coloneqq [B_1(t), \ldots, B_{n_G}(t)]^{\mathrm{T}}$ is a vector of one-dimensional Brownian motions. Let $\dd \boldsymbol{W}(t) \coloneqq [\dd \mathcal{W}(\rho_h^+(t), e_1), \ldots, \dd \mathcal{W}(\rho_h^+(t), e_{n_V})]^{\mathrm{T}}$ and the solution vector $\boldsymbol{U}(t) \coloneqq [U_1(t), \ldots, U_{n_V}(t)]^{\mathrm{T}}$. Writing $\boldsymbol{R}(\boldsymbol{U})$ for $\boldsymbol{R}(\rho_h)$ expressed in terms of the coefficient vector gives
$$
\boldsymbol{M}\dd \boldsymbol{U}(t) + \boldsymbol{A}\boldsymbol{U}(t) \dd t = N^{-1/2}\boldsymbol{G}^{\mathrm{T}}\boldsymbol{M}_{\mathrm{G}}^{-1}\boldsymbol{R}(\boldsymbol{U}(t)) \dd \bB(t).
$$
Given a sequence of times $0 = t_0 < t_1 < \cdots < t_{k-1} < t_k = T$ for some $k \in \mathbb{N}$ and writing $\boldsymbol{U}^n \approx \boldsymbol{U}(t_n)$, we can discretise with semi-implicit Euler to get
$$
(\boldsymbol{M} + \Delta t \boldsymbol{A})\boldsymbol{U}^{n+1} = \boldsymbol{M}\boldsymbol{U}^n + N^{-1/2}\sqrt{\Delta t} \boldsymbol{G}^{\mathrm{T}}\boldsymbol{M}_{\mathrm{G}}^{-1}\boldsymbol{R}(\boldsymbol{U}^n) \boldsymbol{Z},
$$
where $\boldsymbol{Z} \in \mathbb{R}^{n_G}$ is a vector of $\mathcal{N}(0, 1)$ normal random variables.

\begin{remark}[Structure of the matrices]
    The mass matrices $\boldsymbol{M}$ and $\boldsymbol{M}_{\mathrm{G}}$ are block-diagonal so the application of themselves and their inverses is an efficient element-local operation. Since $\boldsymbol{R}$ is obtained from a factorisation of the block-diagonal matrix $\boldsymbol{P}$, it too is block diagonal and so it also benefits from efficient application. Note that only the identity $\boldsymbol{X}\boldsymbol{X}^{\mathrm{T}} = \boldsymbol{P}$ enters the cross-variation, so in practice any such factorisation may be used in place of the symmetric positive semi-definite square root of Proposition~\ref{prop: construct-diffusion-coefficients}; we use a Cholesky factorisation. The stiffness matrix $\boldsymbol{A}$ and discrete gradient matrix $\boldsymbol{G}$ have a local sparsity pattern due to inter-element coupling arising from the fluxes.
\end{remark}

\subsection{Implementation details}

We use the Python package Firedrake~\cite{FiredrakeUserManual} to implement our numerical method. The code to reproduce the figures is available at \url{https://github.com/kamran-arora/Dean-Kawasaki-DG}.

\subsection{Error with smooth observables}

To numerically verify Theorem~\ref{thm: main-theorem}, we set $d=1$ and work on the periodic domain $[0, 2\pi]$. We set $\rho_0(x) = (1+0.35\sin(2x+0.4)+0.25\cos(3x-0.7))/(2\pi)$. We let $N=500,000$ and generate $\rho_{0, h}(x)$ and $\{\bB_i(0)\}_{i=1}^N$ so that they satisfy Assumption~\ref{ass: dG1}. We choose $\Delta t=2.5 \times 10^{-4}$ and $T=0.1$ for which we numerically observe that the spatial error we are interested in is dominant. For the smooth observable $\varphi(x) = \sin(2x+1.1)+0.8\cos(3x-0.4)$, we compute the weak error for second moments for $p=1,2$. More precisely, we use Monte Carlo with $500,000$ sample paths to approximate
$$
\absv{\E{\brack{N^{1/2}(\rho_h(T) - \E{\rho_h(T)}, \mathcal{I}_h \varphi)}^2} - \E{\brack{N^{1/2}\abrack{\mu_T^N - \E{\mu_T^N}, \varphi}}^2}}.
$$
In order to reduce the uncertainty in our Monte Carlo estimator, we recall that $\E{\rho_h} = \overline{\rho}_h$ so that we can instead solve a heat equation using the same numerical method. Moreover, the particle moments can be computed in closed-form and then approximated using a Fourier-based method. Specifically, for $N$ independent Brownian motions $\{\bB_i(t)\}$ with deterministic initial conditions and a smooth test $\varphi$, it holds that
$$
\E{\varphi(\bB_i(t))} = \brack{e^{t \Delta/2} \varphi}(\bB_i(0)).
$$
Therefore, the particle moment can be expressed via
$$
\E{\brack{N^{\frac 1 2}\abrack{\mu_T^N - \E{\mu_T^N}, \varphi}}^2} = \frac{1}{N}\sum_{i=1}^N \brack{\brack{e^{\frac{T \Delta}{2}}\varphi^2}(\bB_i(0)) - \brack{e^{\frac{T\Delta}{2}}\varphi}(\bB_i(0))^2}
$$
where the heat semigroup can then be approximated using Fourier coefficients. The results are shown in Figure~\ref{fig: weak-error} for different values of $h=2\pi/n_x$ where $n_x \in \{6, 8, 10, 12, 16, 20, 24, 28, 32\}$. For $p=1$, we see $O(h^2)$ suggesting a numerical super-convergence effect is in play. For $p=2$, we see an initial period of $O(h^2)$ convergence supporting our theoretical results. The later datapoints have hit the Monte Carlo floor so no conclusion about the order of convergence can be made in this regime. It is also worth noting that although we have the same convergence rate for both $p=1$ and $p=2$ due to an apparent numerical super-convergence effect, the $p=2$ solution appears to lead to a smaller overall error compared with $p=1$. We investigate this further in the following section.

\begin{remark}
    Resolving the convergence rate in the fine mesh regime for $p=2$ requires significantly more samples and/or the use of a variance reduction technique. The former is prohibitively expensive and the latter we defer to future investigations (see the discussion of multilevel Monte Carlo in Section~\ref{sec: future}).
\end{remark}

\begin{figure}
    \centering
    \includegraphics[width=0.8\linewidth]{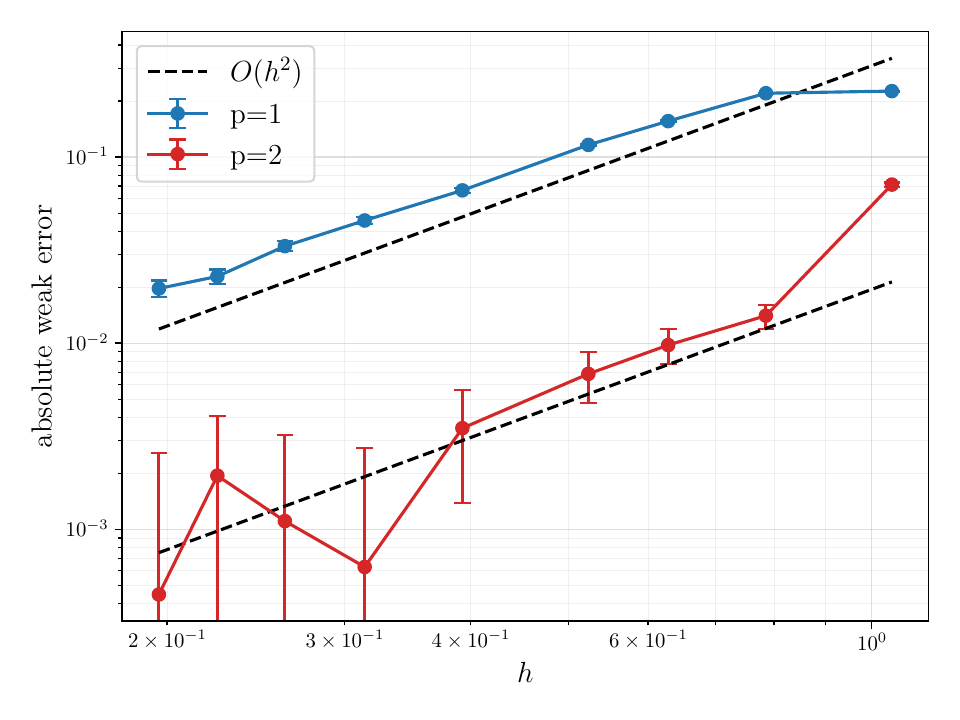}
    \caption{Absolute weak error for second moments using $p=1$ and $p=2$ in $d=1$ for a smooth observable. The fixed parameters are $N=500,000$, $\Delta t = 2.5 \times 10^{-4}$, $T=0.1$, and $500,000$ realisations while the mesh resolution $h$ is varied. The error bars display a $95\%$ confidence interval.}
    \label{fig: weak-error}
\end{figure}

\subsection{Error and cost comparison with discontinuous observables}

Our theoretical result requires sufficient regularity of the test function $\varphi$; however, in practice we would like to be able to use discontinuous observables which fall outside the scope of our theory. For example, taking $\varphi$ to be an indicator function allows one to extract the mass of the solution in a given region. For the next experiments, we choose $\varphi(x) = \mathbbm{1}_{[\pi/2, 3\pi/2]}$ and otherwise use the same experimental setup as before. In Figure~\ref{fig: indicator}\subref{fig: indicator-h}, we again plot the weak error for second moments as a function of the spatial resolution. We observe an order $O(h^{3/4})$ convergence rate for $p=1$ and an order $O(h)$ convergence rate for $p=2$. We cannot link these observations to any theoretical result, but it is not unreasonable to expect that a low-regularity test function leads to lower spatial-convergence rates. Moreover, we again observe that $p=2$ gives a smaller overall error than $p=1$. To investigate whether $p=2$ is truly advantageous, we must fix a computational budget and compare errors against degrees of freedom. Our experiments as described above took, on average, $1.25 \times$ longer to run for $p=2$ than for $p=1$. Therefore, to make the comparison fair we rescale our uncertainties for $p=2$ by $\sqrt{1.25}$. We then plot the experimentally computed errors with manually adjusted confidence intervals against degrees of freedom in Figure~\ref{fig: indicator}\subref{fig: indicator-dofs}. We see that using $p=2$ leads to a smaller overall error at comparable degrees of freedom with a negligible increase in uncertainty, even after accounting for the increased computational cost. Thus, in this particular scenario, the increased cost of a high-order solution is justified by the increased accuracy.

\begin{figure}
    \centering
    \begin{subfigure}[t]{0.48\textwidth}
        \centering
        \includegraphics[width=\textwidth]{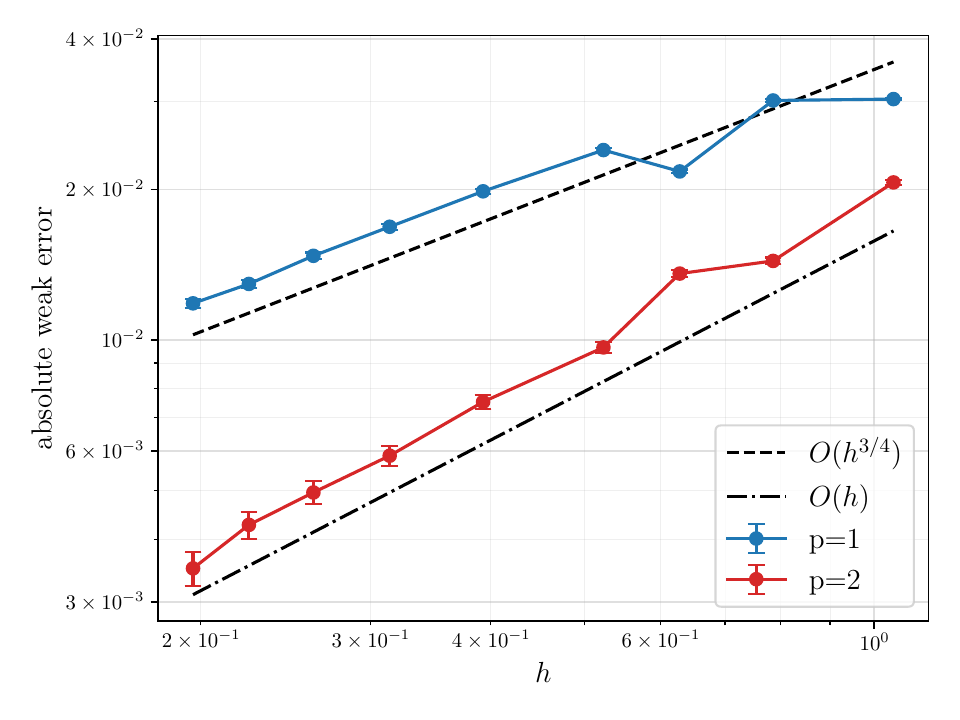}
        \caption{Weak error against \(h\).}
        \label{fig: indicator-h}
    \end{subfigure}
    \hfill
    \begin{subfigure}[t]{0.48\textwidth}
        \centering
        \includegraphics[width=\textwidth]{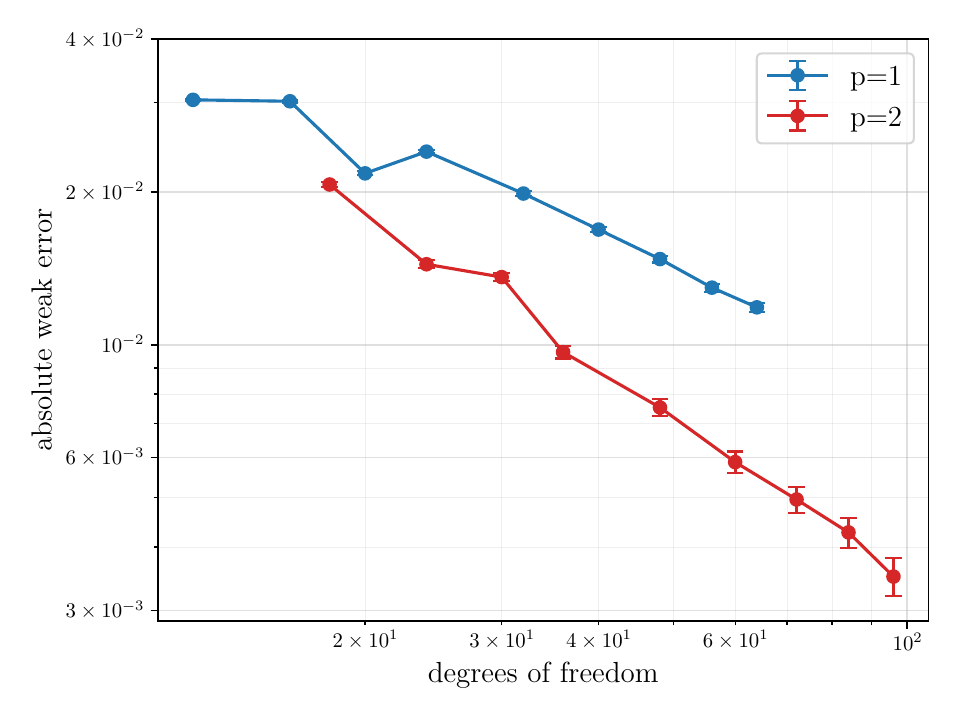}
        \caption{Weak error against DOFs.}
        \label{fig: indicator-dofs}
    \end{subfigure}

    \caption{Absolute weak error for second moments using $p=1$ and $p=2$ in $d=1$ for an indicator function observable. The fixed parameters are $N=500,000$, $\Delta t = 2.5 \times 10^{-4}$, $T=0.1$, and $500,000$ realisations. (A): the mesh resolution $h$ is varied; (B): the degrees of freedom are varied. In both, the error bars display a $95\%$ confidence interval.}
    \label{fig: indicator}
\end{figure}

\subsection{Convection-dominated settings}\label{sec: convection}

To illustrate the advantages of discontinuous Galerkin methods in convection-dominated regimes, we consider a system of $N$ independent Brownian motions moving under the influence of a smooth external potential. That is, we consider the following system of SDEs
$$
\dd \bX_i(t) = -\nabla V(\bX_i(t)) \dd t + \sqrt{2 \sigma}\dd \bB_i(t),
$$
describing the evolution of the particle positions $\bX_i(t) \in \btd$ where $\{\bB_i\}_{i=1}^N$ are independent, standard Brownian motions. The associated Dean--Kawasaki SPDE describing the evolution of the particle density is given by
$$
\partial_t \rho = \sigma \Delta \rho + \nabla \cdot \brack{\rho \nabla V} + N^{-1/2}\nabla \cdot (\sqrt{2 \sigma \rho}\bxi),
$$
where $\bxi$ is a space-time white noise. When the strength of the external potential is large relative to the diffusion, we enter a convection-dominated regime. In this setting, continuous Galerkin schemes can develop spurious oscillations whereas discontinuous Galerkin discretisations can be stabilised using an upwind numerical flux.

Consider the convective term $\nabla \cdot (\boldsymbol{b}u)$. Given two elements $K_\ell$ and $K_r$ that share a facet $F$, we define the upwind value of $u$ with respect to the velocity field $\boldsymbol{b}$ by
$$
u^{\mathrm{up}} \coloneqq u|_{K_{\mathrm{up}}},
$$
where $K_{\mathrm{up}} \in \{K_\ell, K_r\}$ is the element for which $\boldsymbol{b} \cdot \bn_{K_{\mathrm{up}}} \geq 0$. The upwind dG discretisation of the convective term $\nabla \cdot (\rho \nabla V)$ is then given by the following bilinear form
$$
b_h(\rho_h, v_h) \coloneqq -\sum_{K \in \Th} \int_K \rho_h \nabla V \cdot \nabla v_h \dd \bx + \sum_{F \in \Fh}\int_F \rho_h^{\mathrm{up}} \nabla V \cdot \jump{v_h} \dd S.
$$
To derive this, we can multiply $\nabla \cdot (\rho \nabla V)$ by a test function $v$, integrate over a single element, integrate by parts then sum over all elements, to get
$$
\sum_{K \in \Th} \int_K \nabla \cdot (\rho \nabla V)v \dd \bx = -\sum_{K \in \Th} \int_K \rho \nabla V \cdot \nabla v \dd \bx + \sum_{K \in \Th}\int_{\partial K} \rho (\nabla V\cdot \bn_K) v \dd S,
$$
where $\bn_K$ denotes the outward unit normal on $K$. We then project onto our finite-dimensional dG space and seek $\rho_h \in V_p$ such that
$$
\sum_{K \in \Th} \int_K \nabla \cdot (\rho_h \nabla V)v_h \dd \bx = -\sum_{K \in \Th} \int_K \rho_h \nabla V \cdot \nabla v_h \dd \bx + \sum_{K \in \Th}\int_{\partial K} \widehat{\rho}_h (\nabla V \cdot \bn_K) v_h \dd S,
$$
holds for all $v_h \in V_p$ and where $\widehat{\rho}_h$ denotes the numerical flux to be chosen. We choose $\widehat{\rho}_h \coloneqq \rho_h^{\mathrm{up}}$ and, upon rewriting the boundary term as a sum over facets, obtain the bilinear form $b_h(\rho_h, v_h)$.

\begin{figure}
    \centering
    \begin{subfigure}{0.48\textwidth}
        \centering
        \includegraphics[width=\linewidth]{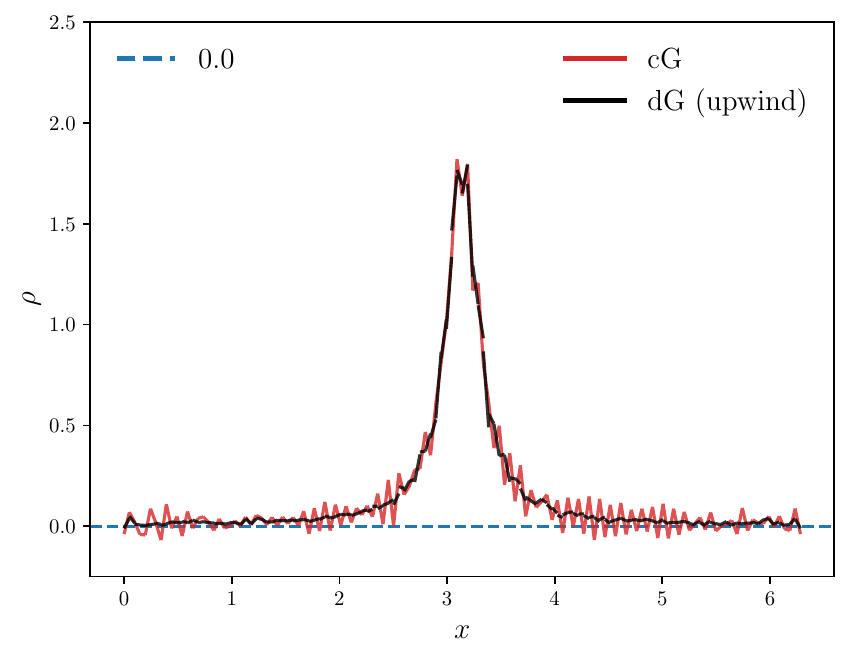}
        \caption{$p=1$.}
        \label{fig: ext_pot_p1}
    \end{subfigure}\hfill
    \begin{subfigure}{0.48\textwidth}
        \centering
        \includegraphics[width=\linewidth]{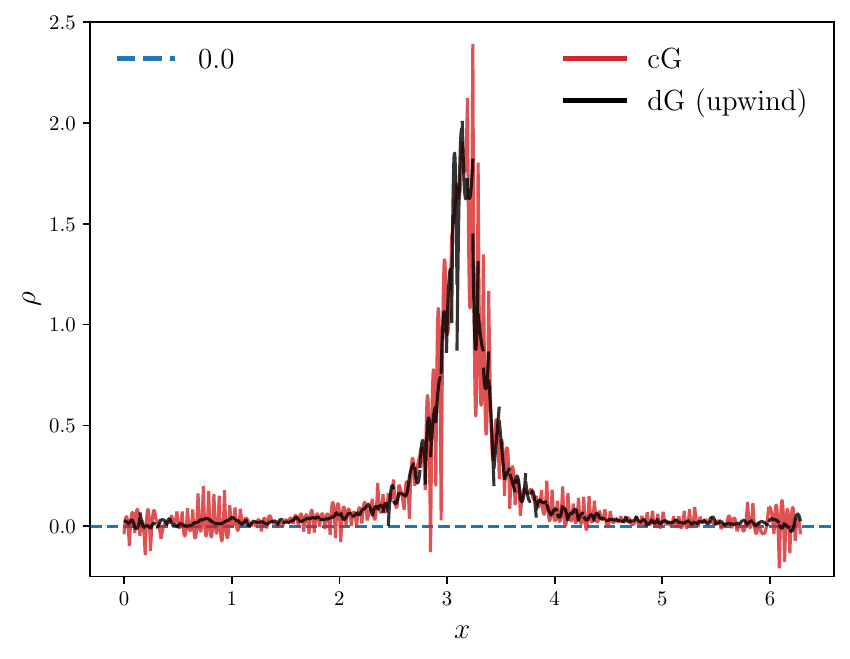}
        \caption{$p=2$.}
        \label{fig: ext_pot_p2}
    \end{subfigure}\hfill
    \caption{Solution to a convection-dominated Dean--Kawasaki SPDE using continuous Galerkin and upwind discontinuous Galerkin. (A): $p=1$; (B): $p=2$.}
    \label{fig: ext_pot}
\end{figure}

As a simple illustrative example, we fix $d=1$, work on $[0, 2 \pi]$ and consider $V(x) = 24 \cos (x)$. This has a minimum at $x=\pi$, so the density will concentrate in the centre of the domain. We set $N=250$ and use a uniform initial condition. The simulation is run for $400$ timesteps on a uniform mesh with $128$ elements. We choose the diffusion parameter $\sigma=0.001$ to ensure we are in a convection-dominated regime. The results for $p=1$ and $p=2$ are shown in Figure~\ref{fig: ext_pot}\subref{fig: ext_pot_p1} and Figure~\ref{fig: ext_pot}\subref{fig: ext_pot_p2} respectively. As expected, the cG solution exhibits large spurious oscillations whereas the dG solution remains stable. Not only are the oscillations unphysical, but they also drive the solution below zero, which is problematic for the Dean--Kawasaki equation given that $\rho$ represents a non-negative particle density.

Moreover, our theoretical analysis for the independent Brownian motion case shows that negativity of the numerical solution is exponentially rare (Proposition~\ref{prop: exp-decay-negative-part}) in an appropriate scaling regime that, roughly speaking, is of the form $Nh^d \gg 1$. This analysis does not cover the present setting, but these experiments suggest a similar result will hold. In particular, the upwind stabilisation appears to suppress the transport-induced oscillations, and thus we only see negativity near the endpoints of the domain where the density is small enough to violate our scaling assumption. In contrast, the cG scheme introduces deterministic undershoots that are unrelated to the natural stochastic fluctuations of the system and which are much larger than the small negative excursions predicted in the purely diffusive setting.

\subsection{Weakly interacting particle systems}

We now replace the external potential of Section~\ref{sec: convection} by an interaction. Let $W$ be a smooth interaction potential and consider
$$
\dd \bX_i(t) = -\frac{1}{N}\sum_{j=1}^N \nabla W(\bX_i(t) - \bX_j(t)) \dd t + \sqrt{2\sigma} \dd \bB_i(t).
$$
The associated Dean--Kawasaki SPDE is
$$
\partial_t \rho = \sigma \Delta \rho + \nabla \cdot \brack{\rho \brack{\nabla W * \rho}} + N^{-1/2}\nabla \cdot (\sqrt{2 \sigma \rho} \bxi).
$$
Writing the convective term as $\nabla \cdot (\boldsymbol{b}[\rho]\rho)$ with the state-dependent velocity field $\boldsymbol{b}[\rho] \coloneqq \nabla W * \rho$, the upwind discretisation applies verbatim with $\nabla V$ replaced by $\boldsymbol{b}[\rho_h]$,
$$
b_h(\rho_h, v_h) \coloneqq -\sum_{K \in \Th} \int_K \rho_h \boldsymbol{b}[\rho_h] \cdot \nabla v_h \dd \bx + \sum_{F \in \Fh}\int_F \rho_h^{\mathrm{up}}\boldsymbol{b}[\rho_h] \cdot\jump{v_h} \dd S.
$$
The upwind value does not appear inside $\boldsymbol{b}$, since $W$ is smooth and hence so is $\nabla W * \rho_h$. The one new difficulty is the convolution, which must be evaluated at each timestep and is non-local; its cost is nevertheless set by the mesh rather than by $N$.

We again take $d=1$ on $[0, 2\pi]$, now with $W(x) = \lambda (1-\cos(x))$ for some $\lambda > 0$. This gives $W'(x) = \lambda \sin(x)$, so the non-local velocity is
$$
\boldsymbol{b}[\rho_h] = W' * \rho_h = \lambda \int_{\btd}\sin (x-y)\rho_h(y) \dd y.
$$
In particular, using a simple trigonometric identity, we have
$$
\boldsymbol{b}[\rho_h] = \lambda \sin(x) \int_{\btd} \cos(y) \rho_h(y) \dd y - \lambda \cos(x) \int_{\btd} \sin(y) \rho_h(y) \dd y,
$$
so we can avoid the evaluation of the convolution in this simple case. The choice of a sinusoidal interaction is closely related to the study of synchronisation (see, for example, the Kuramoto model~\cite{kuramoto2005self}). For small $\lambda \ll \sigma$, we expect the system to converge to a uniform density and, for $\lambda \gg \sigma$, we expect a cluster to form. In the latter case, if we are in a convection-dominated regime, we expect our upwind dG scheme to suppress spurious oscillations that may be present in the cG solution.

\begin{figure}
    \centering
    \includegraphics[width=0.8\linewidth]{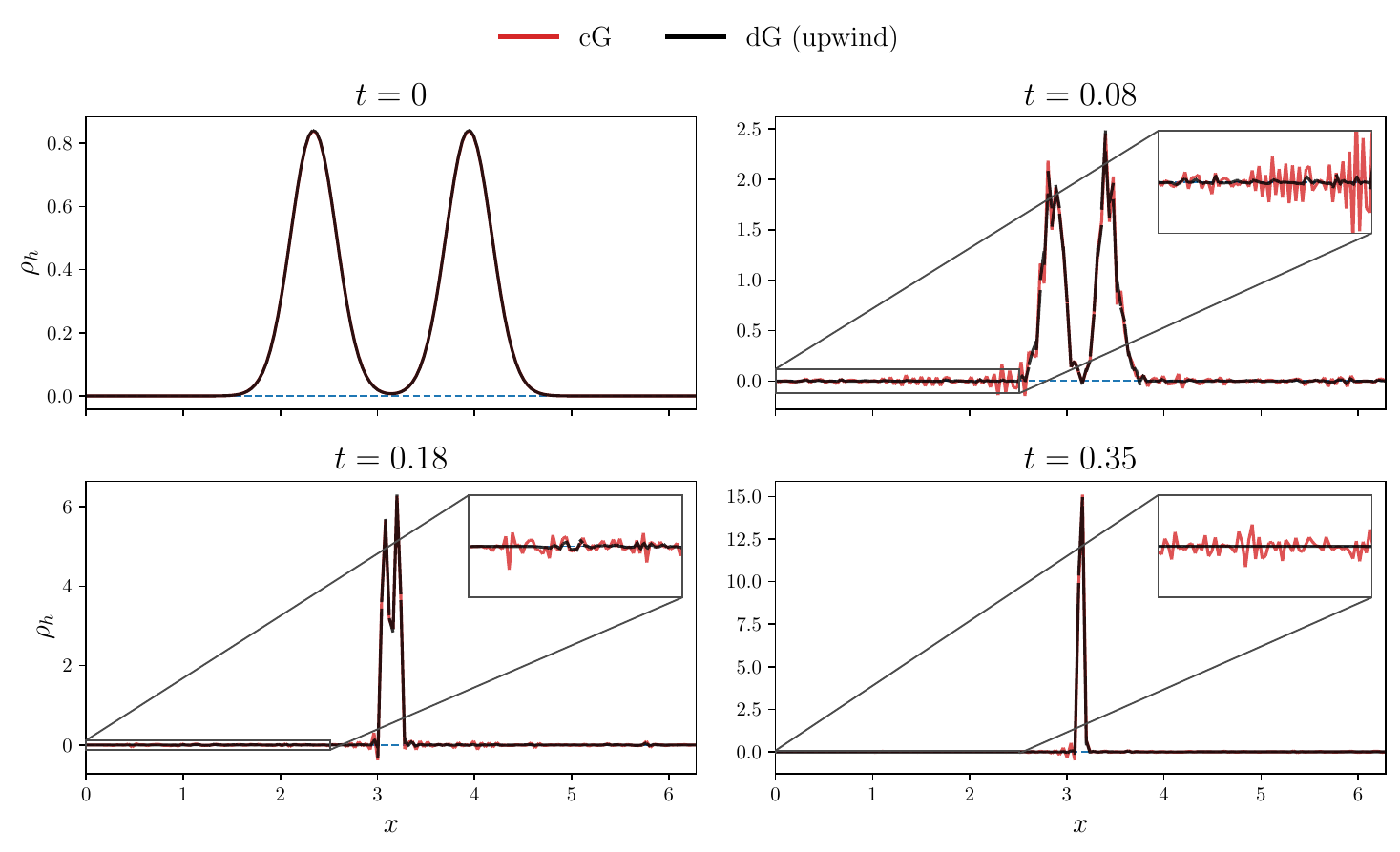}
    \caption{Solution to a Dean--Kawasaki SPDE associated to a system of weakly interacting particles using continuous Galerkin and upwind discontinuous Galerkin.}
    \label{fig: interaction-pot}
\end{figure}

For the experiment, we choose $p=1$, $\sigma = 0.01$, $\lambda = 16.0$, $N=500$ and run the simulation for 1,750 timesteps on a grid with $155$ cells. The results are shown for four different times in Figure~\ref{fig: interaction-pot} and are consistent with our hypotheses. That is, both schemes demonstrate the expected macroscopic clustering behaviour, but the cG method experiences spurious oscillations. 

\subsection{Reflecting Brownian motions}

In many applications, particles are confined to a bounded domain and reflected whenever they reach the boundary. Let $\mathcal{D} \subset \mathbb{R}^d$ be a bounded subset with $C^2$ boundary $\partial \mathcal{D}$, and let $\bn(\bx)$ denote its outward unit normal at $\bx \in \partial \mathcal{D}$. Define the set $A_\ve \coloneqq \{\bx \in \mathcal{D} : d(\partial \mathcal{D}, \bx) < \ve\}$. Then, given a system of $N$ particles we can define the boundary local time of the $j$th particle by
$$
L_j(t) \coloneqq \lim_{\ve \downarrow 0} \frac{\sigma}{\ve}\int_0^t \mathbbm{1}_{A_\ve}\brack{\bX_j(s)} \dd s.
$$
The dynamics for a system of $N$ independent reflecting Brownian motions (RBMs) moving under the influence of a smooth external potential $V$ can then be described by the following Skorokhod equation
\begin{align*}
\dd \bX_j(t)& = -\nabla V(\bX_j(t)) \dd t + \sqrt{2\sigma}\dd \bB_j(t) - \bn(\bX_j(t)) \dd L_j(t), \\
 \dd L_j(t)& = \sigma \delta_{\partial \mathcal{D}}(\bX_j(t)) \dd t,
\end{align*}
where $\delta_{\partial \mathcal{D}}$ is a Dirac measure on $\partial \mathcal{D}$. Intuitively, when a particle $\bX_j$ hits $\partial \mathcal{D}$, it is kicked back in the direction of the inward normal.

In~\cite{bressloff2024generalized}, the author derives a Dean--Kawasaki equation for RBMs on the half-line $[0, \infty)$ with the reflection occurring at the origin. We can generalise this derivation to our multi-dimensional setting to get the following (formal) Dean--Kawasaki equation
\begin{equation}\label{eq: dk-RBM}
    \begin{cases}
        \partial_t \rho = \sigma \Delta \rho + \nabla \cdot (\rho \nabla V) + N^{-1/2} \nabla \cdot (\sqrt{2\sigma \rho} \bxi), \quad &\text{in } \mathcal{D}, \\
        (\sigma \nabla \rho + \rho \nabla V + N^{-1/2}\sqrt{2 \sigma \rho} \bxi) \cdot \bn = 0, \quad &\text{on } \partial\mathcal{D}.
    \end{cases}
\end{equation}
That is, we obtain the standard Dean--Kawasaki equation inside the domain and, at the boundary, we have a zero flux boundary condition. 

\begin{figure}
    \centering
    \includegraphics[width=0.85\linewidth]{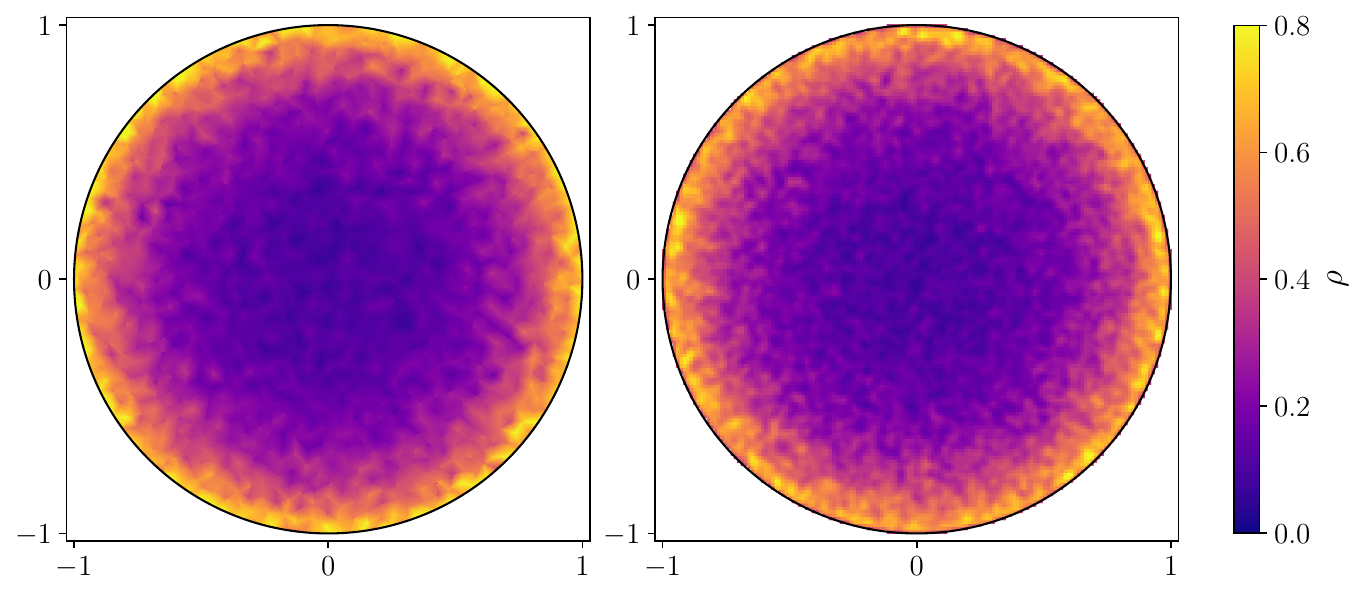}
    \caption{Dean--Kawasaki for RBMs on a disk with an outward radial potential. Left: the upwind dG numerical solution; Right: density of particle positions sampled from the equilibrium density.}
    \label{fig: RBM-comparison}
\end{figure}

\begin{remark}[Interpretation as a martingale problem]
    The derived Dean--Kawasaki equation \eqref{eq: dk-RBM} is purely formal. To make it rigorous, one should interpret the equation via its martingale formulation. In~\cite[Theorem 4.7]{schiavo2024massive}, the author proves that the only solution is the empirical measure of the particle system; i.e., we have a triviality result analogous to that of~\cite{konarovskyi2019dean} for the standard Dean--Kawasaki equation \eqref{eq: Dean--Kawasaki}.
\end{remark}

\begin{remark}[Implementation]
Zero-flux Neumann boundary conditions are simple to impose in both the dG and cG settings. In the dG formulation, the boundary term $\sum_{K \in \Th} \int_{\partial K} \cdot \dd S$ can be split into sums over internal and boundary facets. The internal facets are treated in the usual fashion using numerical fluxes whilst the boundary facet contribution is exactly the normal flux of the current through the boundary. The zero-flux condition enforces that this is zero hence these boundary-facet terms vanish. The cG setting is similar.
\end{remark}

For the numerical experiment, we choose $d=2$ and choose $V(\bx) = -\frac{1}{2}\alpha \absv{\bx}^2$ so that $-\nabla V(\bx) = \alpha \bx$. To avoid direct simulation of the particle system, we instead draw particle samples from the invariant measure of the system. This is the following Gibbs measure
$$
\rho_\infty(\bx) = \frac{1}{\mathcal{Z}}\exp \brack{-\frac{V(\bx)}{\sigma}}, \quad \mathcal{Z} \coloneqq \int_{\mathcal{D}} \exp \brack{-\frac{V(\bx)}{\sigma}} \dd \bx.
$$
For our specific choice of $V(\bx)$ on a disk of radius $R>0$, this evaluates to
$$
\rho_\infty(r) = \frac{\alpha}{2 \pi \sigma \brack{\exp \brack{\frac{\alpha R^2}{2 \sigma}} - 1}}\exp \brack{\frac{\alpha r^2}{2 \sigma}}, \quad 0 \leq r \leq R,
$$
where $r = \absv{\bx}$.

In Figure~\ref{fig: RBM-comparison}, we plot a single realisation of the Dean--Kawasaki solution using $N=100,000$ particles that we ran for $5,000$ timesteps and compare this to the density of $N$ particles sampled from $\rho_\infty$. We see that the Dean--Kawasaki equation is able to replicate the behaviour of the particle system. In particular, in both plots, we see the formation of an annular boundary layer with similar density values caused by the competition between the external potential driving particles outwards and the boundary reflecting them inwards.

The ability to simulate Dean--Kawasaki equations with zero-flux Neumann boundary conditions is important for studying particle systems with reflection at the boundary. In particular, the numerical solution of reflecting SDEs is non-trivial, requiring specialised numerical schemes that carefully treat the solution near and at the boundary to ensure accurate results. Such operations become even more difficult on irregular domains. In contrast, adding the boundary condition to the Dean--Kawasaki equation is a trivial modification yielding a computationally efficient method for studying reflecting particle systems on irregular domains. This significantly broadens the applicability of Dean--Kawasaki equations which previously, to the best of our knowledge, have predominantly been simulated on the torus or rectangular domains. 

\section{Extensions and future work}\label{sec: future}

\textbf{Analysis.} The analysis in this paper was presented for independent Brownian motions, and we presented numerical experiments beyond the scope of our theory that looked at external and interaction potentials, and zero-flux Neumann boundary conditions. One possible extension is to extend the analysis to these settings. The case of interaction potentials has been considered in~\cite{cornalba2026density} for finite-difference discretisations. Another direction to investigate is the analysis for full space-time discretisations. 

\textbf{Positivity preservation.} Solutions to the Dean--Kawasaki equation are empirical measures of a particle system so are non-negative by construction. Discretisations should thus preserve positivity, and this remains a core open question for our method. One avenue is the local post-processing of Remark~\ref{remark_limiters}, and two questions there seem worth pursuing: whether the probability that the post-processing applies on \emph{every} element can be improved beyond $1 - \mathcal{E}(N, h)$, perhaps by limiting only on a subset of elements; and whether limiting can be applied at every timestep of a fully discrete scheme, which would also alleviate the mass loss caused by the positive part $\rho_h^+$ in the noise. Both require an analysis at the fully discrete level, and we leave them for future research.

\textbf{Fluctuation dissipation.} Besides the cross-variation of the noise and positivity, another key structural feature of the Dean--Kawasaki equation (equivalently, of the underlying particle system) that we wish to preserve under numerical discretisation is the fluctuation-dissipation relationship. Roughly speaking, this enforces that the fluctuations injected by the stochastic term are balanced by the dissipation generated by the diffusion operator. A first step towards this goal is to preserve a linearised version of the fluctuation-dissipation relation derived by linearising the Dean--Kawasaki equation around the steady state of the particle system (see, for example, the discussion in~\cite[Section 3.1.2]{bell2026surface}). For a SIP discontinuous Galerkin discretisation, the penalty parameter $\eta$ breaks this structure since it introduces additional dissipation with no corresponding stochasticity. However, following~\cite[Remark 38.19]{ern2021finite2}, if the discrete gradient is constructed in the $(p+1)$-th order polynomial space, then on simplicial meshes the dG energy norm of any $v_h \in V_p$ is controlled by $\|{\dgrad^{p+1} v_h}\|$; consequently, replacing $a_h$ by the penalty-free bilinear form
$$
\tilde{a}_h(v_h, w_h) \coloneqq \frac{1}{2}\brack{\dgrad^{p+1} v_h, \dgrad^{p+1} w_h}
$$
yields a stable and optimally convergent discretisation. Such a form is built from precisely the object that carries the cross-variation of our noise \eqref{eq: crossvar-target}, so linearised fluctuation-dissipation balance would become structural rather than imposed. The price is a higher-dimensional gradient space, and therefore increased computational cost, together with a loss of exact consistency since the liftings are discrete objects.

\textbf{Multilevel Monte Carlo.} A common method of variance reduction is the Multilevel Monte Carlo method~\cite{giles2015multilevel}. This was studied for finite difference discretisations of the Dean--Kawasaki equation in~\cite{cornalba2025multilevel} and allows one to approximate observables of the underlying particle system at reduced computational cost. The method hinges on an appropriate coupling of the noise between coarse and fine grid/mesh resolutions. It would be interesting to investigate the design of couplings that are compatible with our high order noise discretisation.

\medskip

\textbf{AI statement.} ChatGPT and Claude were used for proofreading, assistance with the numerical experiments, and exploration of proof strategies. All content was written by the authors who take full responsibility for the contents of this paper.

\medskip

\textbf{Acknowledgments.} KA is supported by a scholarship from the EPSRC Centre for Doctoral Training in Statistical Applied Mathematics at Bath (SAMBa), under the project EP/S022945/1. This research made use of the Nimbus High Performance Computing Service at the University of Bath. We would like to thank Tristan Pryer for numerous insightful discussions and useful advice about discontinuous Galerkin methods and for his suggestion to use the discrete gradient. 

\bibliographystyle{amsplaindoi}
\bibliography{ref.bib}

\appendix

\section{Proof of weak existence and moment bounds}\label{app: well-posed}

We give a short proof of Theorem~\ref{thm: weak-existence}.

\begin{proof}[Proof of Theorem~\ref{thm: weak-existence}]
    Our variational problem is a finite-dimensional SDE. In particular, let $\{e_i\}_{i=1}^{n_V}$ be a basis for $V_p$, where $n_V \coloneqq \mathrm{dim}(V_p)$. By linearity, we can replace $v_h$ with any of the basis functions, and we can expand $\rho_h(t) = \sum_{i=1}^{n_V} X_h^i(t) e_i$ for some random, time-dependent coefficients $X_h^i(t)$. Substituting into the variational form gives
    $$
    \dd X_h(t) = -M^{-1}A X_h(t) \dd t + N^{-1/2}M^{-1}\Sigma(X_h(t)) \dd B(t),
    $$
    where $X_h(t) = (X_h^1(t), \ldots, X_h^{n_V}(t))$, $M_{k\ell} = (e_\ell, e_k)$ is the mass matrix on $V_p$, $A_{k\ell} = a_h(e_\ell, e_k)$ is the stiffness matrix associated to the SIP bilinear form. For a coefficient vector $x \in \mathbb{R}^{n_V}$, write $X \coloneqq \sum_{i=1}^{n_V}x_i e_i \in V_p$ for the associated function and let $\Sigma(x) \in \mathbb{R}^{n_V \times n_G}$ be the matrix with entries $\Sigma(x)_{k \ell} = (\boldsymbol{g}_\ell(X), \dgrad e_k)$. Then
    $$
    \norm{(-M^{-1}A) x} \leq \norm{M^{-1}A}_{\mathrm{op}}\norm{x} \lesssim_h \norm{x},
    $$
    since the basis is fixed. We also have
    $$
    \norm{\Sigma(x)}_F^2 = \sum_{k=1}^{n_V}\sum_{\ell=1}^{n_G}(\boldsymbol{g}_\ell(X), \dgrad e_k)^2 = \sum_{k=1}^{n_V}(X^+, \absv{\dgrad e_k}^2),
    $$
    using Proposition~\ref{prop: construct-diffusion-coefficients}. Again, since the basis is fixed, it follows that
    $$
    \norm{\Sigma(x)}_F^2 \lesssim_h \norm{X}_{L^1} \lesssim_h \norm{x}.
    $$
    It follows that
    $$
    \norm{(-M^{-1}A) x}^2 + N^{-1}\norm{M^{-1}\Sigma(x)}_F^2 \lesssim_{N, h} 1 + \norm{x}^2.
    $$
    That is, we have a linear growth condition. Coupled with the continuity of the drift and diffusion, we can invoke standard results for finite-dimensional SDEs~\cite[Theorems 2.3 and 2.4]{ikeda2014stochastic} to assert the existence of a non-explosive, probabilistically weak solution for $t \in [0, T]$.

    Proving the moment bound is standard. Fix $q \geq 2$ and write the SDE in the form $\dd X_h(t) = b(X_h(t)) \dd t + \sigma(X_h(t)) \dd B(t)$ where the previous arguments imply that we have the growth condition
    $$
    \absv{b(x)}^q + \norm{\sigma(x)}_F^q \leq C_{N, h, q}(1+\absv{x}^q).
    $$
    We then define the stopping time $ \tau_R \coloneqq \inf \{t \geq 0~:~\absv{X_h(t)} \geq R\}$ for any $R \geq 0$. We can write the stopped SDE as
    $$
    X_h(t \wedge \tau_R) = X_h(0) + \int_0^t \mathbbm{1}_{\{s \leq \tau_R\}}b(X_h(s)) \dd s + \int_0^t \mathbbm{1}_{\{s \leq \tau_R\}} \sigma(X_h(s)) \dd B(s).
    $$
    From this formulation, it follows that
    \begin{multline*}
    \E{\sup_{r \in [0, t]}\norm{X_h(r \wedge \tau_R)}^q} \lesssim_{q} \E{\norm{X_h(0)}^q} + \E{\sup_{r \in [0, t]}\norm{\int_0^r \mathbbm{1}_{\{s \leq \tau_R\}}b(X_h(s))\dd s}^q} \\ 
    + \E{\sup_{r \in [0, t]}\norm{\int_0^r \mathbbm{1}_{\{s \leq \tau_R\}} \sigma(X_h(s)) \dd B(s)}^q}.
    \end{multline*}
    Using the growth condition, H\"older's inequality and the BDG inequality, it follows that
    $$
    \E{\sup_{r \in [0, t]}\norm{X_h(r \wedge \tau_R)}^q} \lesssim_{q, T, N, h} \brack{1+\E{\norm{X_h(0)}^q}} + \int_0^t \E{\sup_{u \in [0, s]}\norm{X_h(u \wedge \tau_R)}^q} \dd s.
    $$
    Applying Gr\"onwall's inequality gives
    $$
    \E{\sup_{t \in [0, T]}\norm{X_h(t \wedge \tau_R)}^q} \lesssim_{q, T, N, h} \brack{1 + \E{\norm{X_h(0)}^q}},
    $$
    where the estimate is independent of $R$. Since $X_h$ is non-explosive, $\tau_R \rightarrow T$ as $R \rightarrow \infty$. The result follows upon taking limits and using Fatou's lemma.
\end{proof}

\section{Recursive relations for moments}\label{sec: recursive}

We compute It\^o differentials for the quantities in \eqref{eq: SN-TN} and prove some related estimates that we require for the convergence analysis.

\begin{lemma}\label{lem: big-moments-lemma}
    Fix $M \in \mathbb{N}$, a multi-index $\bj = (j_1, \ldots, j_M)$ and a set of test functions $\boldsymbol{\varphi} = (\varphi_1, \ldots, \varphi_M) \in [C^2]^M$. For any $(k, \ell) \in \{1, \ldots, M\}^2$, denote by $\bj^{k \ell}$ the vector $\bj$ with both $j_k$ and $j_\ell$ decreased by one unit (if $k=\ell$, then $j_k$ is understood to be reduced by two units). For any $k \in \{1, \ldots, M\}$, denote by $\bj^k$ the vector $\bj$ with $j_k$ decreased by one unit. Then, for $\mathcal{S}_N^{\bj}$ and $\mathcal{T}_N^{\bj}$ as defined in \eqref{eq: SN-TN}, we have the following It\^o differentials:
    \begin{multline}\label{eq: ito-moments-dkdg}
        \dd \mathcal{S}_N^{\bj}(\mathcal{I}_h \boldsymbol{\varphi}, T, t) = -N^{-1/2}\sum_{m=1}^M j_m \mathcal{S}_N^{\bj^m}(\mathcal{I}_h \boldsymbol{\varphi}, T, t) \dd \mathcal{W}(\rho_h^+(t), \phi_{m, h}^t) \\
        + N^{-1}\sum_{k, \ell=1}^M \frac{(j_k - \delta_{k \ell})j_\ell}{2}\mathcal{S}_N^{\bj^{k\ell}}(\mathcal{I}_h \boldsymbol{\varphi}, T, t)(\rho_h^+(t), \dgrad \phi_{k, h}^t \cdot \dgrad \phi_{\ell, h}^t) \dd t,
    \end{multline}
    and
    \begin{multline}\label{eq: ito-moments-particles}
        \dd \mathcal{T}_N^{\bj}(\boldsymbol{\varphi}, T, t) = -\sum_{m=1}^M j_m \mathcal{T}_N^{\bj^m}(\boldsymbol{\varphi}, T, t)\brack{N^{-1}\sum_{r=1}^N \nabla \phi_m^t(\bB_r(t)) \cdot \dd \bB_r(t)} \\
        + N^{-1}\sum_{k, \ell=1}^M \frac{(j_k - \delta_{k \ell})j_\ell}{2}\mathcal{T}_N^{\bj^{k\ell}}(\boldsymbol{\varphi}, T, t)\brack{N^{-1}\sum_{r=1}^N \nabla\phi_k^t(\bB_r(t)) \cdot \nabla \phi_\ell^t(\bB_r(t))} \dd t.
    \end{multline}
\end{lemma}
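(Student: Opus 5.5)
The plan is to compute both differentials by a direct application of the (multivariate) It\^o formula to the product $\prod_m x_m^{j_m}$. The key preliminary step is to show that each single observable $\mathcal{S}_N(\mathcal{I}_h\varphi_m,T,t)$ and $\mathcal{T}_N(\varphi_m,T,t)$ is drift-free. Once that is established, only first-order martingale terms and second-order cross-variation terms survive in the product formula.

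\emph{Discrete side.} I would fix $m$ and compute $\dd(\rho_h(t),\phi_{m,h}^t)$. The product rule in time is legitimate because $\phi_{m,h}^t\in V_p$ is deterministic and differentiable in $t$. It gives
\[
\dd(\rho_h(t),\phi_{m,h}^t)=\dd(\rho_h(t),v_h)\big|_{v_h=\phi_{m,h}^t}+(\rho_h(t),\partial_t\phi_{m,h}^t)\,\dd t .
\]
I would then substitute \eqref{eq: dg-dk} into the first term, which produces $-a_h(\rho_h,\phi_{m,h}^t)\dd t+N^{-1/2}\dd\mathcal{W}(\rho_h^+,\phi_{m,h}^t)$. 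For the second term I would use \eqref{eq: SIP-adjoint-heat} with $v_h=\rho_h(t)$ together with the symmetry of $a_h$, giving $(\rho_h,\partial_t\phi_{m,h}^t)=a_h(\phi_{m,h}^t,\rho_h)=a_h(\rho_h,\phi_{m,h}^t)$. The drift therefore cancels exactly, and since $(\rho_h(0),\phi_{m,h}^0)$ is constant we get $\dd\mathcal{S}_N(\mathcal{I}_h\varphi_m,T,t)=N^{-1/2}\dd\mathcal{W}(\rho_h^+,\phi_{m,h}^t)$. The sign convention for the martingale term should be matched to the statement; this sign is immaterial for the second-order terms.

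Next I would apply It\^o's formula to $F(x)=\prod_m x_m^{j_m}$. Its first derivative is $\partial_m F=j_mx^{\bj^m}$, and its second derivative is $\partial_k\partial_\ell F=(j_k-\delta_{k\ell})j_\ell\,x^{\bj^{k\ell}}$. For the cross-variations I would invoke Proposition~\ref{prop: existence-of-noise}: $\dd\langle\mathcal{W}(\rho_h^+,\phi_{k,h}^t),\mathcal{W}(\rho_h^+,\phi_{\ell,h}^t)\rangle=(\rho_h^+,\dgrad\phi_{k,h}^t\cdot\dgrad\phi_{\ell,h}^t)\dd t$. Here one checks that the time-dependence of the test functions is harmless, because $\mathcal{W}$ is given explicitly as $\sum_j(\boldsymbol{g}_j(\rho_h),\dgrad v_h)\dd B_j$ and the cross-variation identity of Proposition~\ref{prop: construct-diffusion-coefficients} holds pointwise in $t$. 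The factor $\tfrac12$ from It\^o's formula, summed over ordered pairs $(k,\ell)$, yields \eqref{eq: ito-moments-dkdg}.

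\emph{Particle side.} I would apply It\^o's formula to $\phi_m^t(\bB_r(t))$. This gives $\big(\partial_t\phi_m^t+\tfrac12\Delta\phi_m^t\big)(\bB_r)\dd t+\nabla\phi_m^t(\bB_r)\cdot\dd\bB_r$, and by \eqref{eq: adjoint-heat} the drift vanishes. Averaging over $r$ gives $\dd\mathcal{T}_N=N^{-1}\sum_r\nabla\phi_m^t(\bB_r)\cdot\dd\bB_r$. Since the $\bB_r$ are independent, the cross-variation of the $k$ and $\ell$ observables is $N^{-2}\sum_r\nabla\phi_k^t\cdot\nabla\phi_\ell^t(\bB_r)\dd t$. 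The same product It\^o formula then gives \eqref{eq: ito-moments-particles}. The regularity required, namely $C^{1,2}$ for $\phi^t$, follows from $\varphi\in C^2$ via heat-semigroup smoothing.

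I expect no serious obstacle. The only points needing care are the justification of the time-dependent test function inside $\mathcal{W}$ (handled by the explicit representation), and the bookkeeping of the combinatorial coefficient $(j_k-\delta_{k\ell})j_\ell/2$ in the diagonal case $k=\ell$.
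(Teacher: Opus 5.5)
Your proposal is correct and follows the paper's route. You show that each single observable is drift-free (via symmetry of $a_h$ on the discrete side and the backward heat equation on the particle side), then apply It\^o's formula using the cross-variation from Proposition~\ref{prop: existence-of-noise}. Applying the multivariate It\^o formula directly to $\prod_m x_m^{j_m}$ is just a compact substitute for the power-then-induction-over-$M$ argument that the paper borrows from \cite[Lemma 15]{cornalba2023dean}.

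Your remark on the sign is also consistent with the paper: its own computation yields $+N^{-1/2}\,\dd\mathcal{W}(\rho_h^+(t),\phi_h^t)$. So the minus sign in the statement is a convention or typo, and it plays no role downstream, since only the drift and cross-variation terms are used.
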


\begin{proof}
    We first prove \eqref{eq: ito-moments-dkdg}. Consider the case $M=\absv{\bj} = 1$. We compute the It\^o differential of $(\rho_h(t), \phi_h^t) - (\rho_h(0), \phi_h^0)$, which gives
    $$
    \dd \brack{(\rho_h(t), \phi_h^t) - (\rho_h(0), \phi_h^0)} = (\dd \rho_h(t), \phi_h^t) + (\rho_h(t), \partial_t \phi_h^t) \dd t.
    $$
    For the first term, since the test function is deterministic, we can pull the It\^o differential out of the $L^2$ inner product. This is then the numerical scheme \eqref{eq: dg-dk} with $v_h = \phi_h^t$. Similarly, the second term is the numerical scheme \eqref{eq: SIP-adjoint-heat} with $v_h = \rho_h(t)$. Altogether, we get
    \begin{align*}  
      \dd \brack{(\rho_h(t), \phi_h^t) - (\rho_h(0), \phi_h^0)} &= -a_h(\rho_h(t), \phi_h^t) \dd t + N^{-1/2}\dd \mathcal{W}(\rho_h^+(t), \phi_h^t)\\&\qquad + a_h(\phi_h^t, \rho_h(t)) \dd t.
    \end{align*}
    Using the symmetry of $a_h(\cdot, \cdot)$, we deduce that
    $$
    \dd \brack{(\rho_h(t), \phi_h^t) - (\rho_h(0), \phi_h^0)} = N^{-1/2}\dd \mathcal{W}(\rho_h^+(t), \phi_h^t).
    $$
    For the remainder of the proof, we only sketch the key idea referring the reader to~\cite[Lemma 15]{cornalba2023dean} for the complete details. Specifically, for $M=1$ and $\absv{\bj} > 1$, one applies the It\^o formula to the composition of the function $z \mapsto z^j$ with $(\rho_h(t), \phi_h^t)$. An induction argument then extends the result to arbitrary indices $M$. Analogous arguments give \eqref{eq: ito-moments-particles}.
\end{proof}

\begin{corollary}\label{cor: sup-S-one-j}
    For any $j \geq 2$, we have
    \begin{equation}
        \max_{t \in [0, T]}\E{\absv{\mathcal{S}_N(\mathcal{I}_h \varphi, T, t)}^j} \leq \brack{2N^{-1}T C(d, \delta, \rhomin, \rhomax)\max_{t \in [0, T]}\norm{\dgrad \phi_h^t}^2}^{j/2}j^{3j}.
    \end{equation}
\end{corollary}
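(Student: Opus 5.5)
We use the case $M=1$, $\bj=(j)$ of Lemma~\ref{lem: big-moments-lemma}. Since $\mathcal{S}_N(\mathcal{I}_h\varphi,T,0)=0$, we have
$$
\mathcal{S}_N(\mathcal{I}_h\varphi,T,t)=N^{-1/2}\int_0^t \dd \mathcal{W}(\rho_h^+(s),\phi_h^s).
$$
This is a continuous martingale, and by Proposition~\ref{prop: existence-of-noise} its quadratic variation is
$$
\abrack{\mathcal{S}_N}_t=N^{-1}\int_0^t(\rho_h^+(s),|\dgrad\phi_h^s|^2)\dd s.
$$
Two routes are available. The first applies the Burkholder--Davis--Gundy inequality with the explicit constant $C_j\le (Cj)^{j}$ (or $\sim j^{j/2}$). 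The second mimics~\cite{cornalba2023dean}: apply It\^o to $z\mapsto z^j$ and take expectations. This kills the martingale part and gives
$$
\E{\mathcal{S}_N(t)^j}=N^{-1}\frac{j(j-1)}{2}\int_0^t\E{\mathcal{S}_N(s)^{j-2}(\rho_h^+(s),|\dgrad\phi_h^s|^2)}\dd s.
$$
For odd $j$ we first pass to $|\mathcal{S}_N|^j$, or interpolate between even moments. I would take the second route, since it produces the $j^{3j}$-type constants naturally and matches the recursive structure used later.

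The key step is to bound the cross-variation term. By the $L^\infty$--$L^1$ H\"older inequality, as in Lemma~\ref{lem: second-moments},
$$
(\rho_h^+(s),|\dgrad\phi_h^s|^2)\le \norm{\rho_h(s)}_{L^\infty}\max_{t}\norm{\dgrad\phi_h^t}^2 .
$$
Then apply H\"older in $\omega$ with exponents $j/(j-2)$ and $j/2$:
$$
\E{|\mathcal{S}_N(s)|^{j-2}\norm{\rho_h(s)}_{L^\infty}}\le \E{|\mathcal{S}_N(s)|^j}^{(j-2)/j}\,\E{\sup_t\norm{\rho_h(t)}_{L^\infty}^{j/2}}^{2/j}.
$$
The last factor is at most $C(d,\delta,\rhomin,\rhomax)(j/2)^4$ by \eqref{eq: Linfty-Linfty-5} in Proposition~\ref{prop: Linfty-Linfty}.

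Set $m_j\coloneqq\max_{t\le T}\E{|\mathcal{S}_N(t)|^j}$ and $K\coloneqq N^{-1}T\,C\max_t\norm{\dgrad\phi_h^t}^2$. The inequalities above combine to
$$
m_j\le \frac{j(j-1)}{2}\,j^4\,K\,m_j^{(j-2)/j}.
$$
If $m_j>0$ we divide, which gives $m_j^{2/j}\le \tfrac12 j^6K$, and hence $m_j\le (j^6K/2)^{j/2}\le (2K)^{j/2}j^{3j}$. If $m_j=0$ there is nothing to prove. The division is legitimate because $m_j<\infty$, which follows from Theorem~\ref{thm: weak-existence} together with the boundedness of the test functions. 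For odd $j$, the same computation with $|z|^j$ works: the function is $C^2$ for $j\ge2$, and its second derivative is $j(j-1)|z|^{j-2}$.

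The main obstacle is tracking constants so as to land exactly on $(2K)^{j/2}j^{3j}$, and applying \eqref{eq: Linfty-Linfty-5} at moment order $j/2$. For $j=2,3$ that order may be below $1$, where the bound is not directly stated. In that case I would use the order-$1$ bound (Jensen), which leaves the $j^4$ factor harmless.
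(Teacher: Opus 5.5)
Your proof is correct. Its skeleton is the paper's: It\^o's formula for $z\mapsto|z|^j$ applied to $\mathcal{S}_N$, the $L^\infty$--$L^1$ H\"older inequality in space, and the moment bound \eqref{eq: Linfty-Linfty-5}. The difference is in how you close the estimate.

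\textbf{How the paper closes.} It applies H\"older in $\omega$ with exponents $(j-1)/(j-2)$ and $j-1$. This bounds the $j$-th moment by $\E{|\mathcal{S}_N|^{j-1}}^{(j-2)/(j-1)}$ times the $L^\infty$ moment of order $j-1$. It then inducts on $j$, starting from $j=2$.

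\textbf{How you close.} You use exponents $j/(j-2)$ and $j/2$, so the same quantity $m_j$ reappears on the right-hand side. You then finish in one step by dividing by $m_j^{(j-2)/j}$. This needs $m_j<\infty$ a priori, which you correctly take from Theorem~\ref{thm: weak-existence}. The $N,h$-dependent constant there is used only qualitatively, so it does not spoil the bound.

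\textbf{What each route buys.}
\begin{itemize}
\item Your version avoids the induction altogether.
\item It only needs $L^\infty$ moments of order $j/2$ rather than $j-1$.
\item It even gives the slightly sharper constant $(K/2)^{j/2}j^{3j}$.
\item The paper's induction avoids the division step. However, it tacitly needs the same integrability from Theorem~\ref{thm: weak-existence} to discard the martingale term after taking expectations.
\end{itemize}

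\textbf{A small correction.} Your closing worry is unfounded: $j/2\geq 1$ for every $j\geq 2$, so \eqref{eq: Linfty-Linfty-5} applies directly. If you insist on integer orders, pass upward to order $\lceil j/2\rceil\leq j$ via Lyapunov's inequality. Passing down to order $1$ via Jensen would go the wrong direction, for example at $j=3$. Either way only constants change.
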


\begin{proof}
    The proof is a direct modification of~\cite[Lemma 16]{cornalba2023dean}. We sketch the details to illustrate our different application of H\"older's inequality to account for the lack of $L^\infty(L^\infty)$ estimates for solutions of \eqref{eq: SIP-adjoint-heat}. First apply the It\^o formula to the composition of the map $z \mapsto \absv{z}^j$ and $(\rho_h(t), \phi_h^t)$. Upon taking the expected value one obtains
    $$
    \dd \E{\absv{\mathcal{S}_N(\mathcal{I}_h \varphi, T, t)}^j} = N^{-1} \frac{j(j-1)}{2}\E{\absv{\mathcal{S}_N(\mathcal{I}_h \varphi, T, t)}^{j-2}(\rho_h^+(t), \dgrad \phi_h^t \cdot \dgrad \phi_h^t)} \dd t.
    $$
    Using H\"older's inequality in expectation gives
    \begin{multline*}
    \dd \E{\absv{\mathcal{S}_N(\mathcal{I}_h \varphi, T, t)}^j} \leq N^{-1} \frac{j(j-1)}{2} \E{\absv{\mathcal{S}_N(\mathcal{I}_h \varphi, T, t)}^{j-1}}^{\frac{j-2}{j-1}} \\ 
    \times \E{\absv{(\rho_h^+(t), \dgrad \phi_h^t \cdot \dgrad \phi_h^t)}^{j-1}}^{\frac{1}{j-1}}.
    \end{multline*}
    For the final term, we use $L^\infty$-$L^1$ H\"older in space and apply the $L^\infty(L^\infty)$ estimate in Proposition~\ref{prop: Linfty-Linfty} with exponent $j-1$, to get 
    $$
    \dd \E{\absv{\mathcal{S}_N(\mathcal{I}_h \varphi, T, t)}^j} \leq \frac{1}{N}j^6 \norm{\dgrad \phi_h^t}^2 C(d, \delta, \rhomin, \rhomax) \E{\absv{\mathcal{S}_N(\mathcal{I}_h \varphi, T, t)}^{j-1}}^{\frac{j-2}{j-1}} \dd t.
    $$
    Taking the supremum in time and then performing induction over $j$ gives the result.
\end{proof}

\begin{corollary}\label{cor: sup-moments}
    Given $\bj = (j_1, \ldots, j_M)$ with $\absv{\bj}=j$ and $\boldsymbol{\varphi} = (\varphi_1, \ldots, \varphi_M) \in [H^{p+2}]^M$, we have
    \begin{align}
        \max_{t \in [0, T]}\E{\absv{\mathcal{T}_N^{\bj}(\boldsymbol{\varphi}, T, t)}} &\leq \brack{N^{-1}T}^{j/2}j^j \brack{\prod_{m=1}^M \norm{\nabla \varphi_m}_{L^\infty}^{j_m}}, \label{eq: sup-moments-T}\\
        \max_{t \in [0, T]}\E{\absv{\mathcal{S}_N^{\bj}(\mathcal{I}_h\boldsymbol{\varphi}, T, t)}} &\leq \brack{2N^{-1}T C(d, \delta, \rhomin, \rhomax)}^{j/2}j^{3j}\brack{\prod_{m=1}^M \norm{\varphi_m}_{H^{p+2}}^{j_m}}. \label{eq: sup-moments-S}
    \end{align}
\end{corollary}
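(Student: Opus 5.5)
The two bounds in Corollary~\ref{cor: sup-moments} are products of one-index moments, so I will reduce both to single-observable moment estimates by a generalised H\"older inequality in expectation. For fixed $t$, with exponents $p_m = j/j_m$ (only over those $m$ with $j_m \geq 1$), we have
$$
\E{\absv{\mathcal{S}_N^{\bj}(\mathcal{I}_h\boldsymbol{\varphi}, T, t)}} \leq \prod_{m=1}^M \E{\absv{\mathcal{S}_N(\mathcal{I}_h \varphi_m, T, t)}^{j}}^{j_m/j},
$$
and the same holds for $\mathcal{T}_N^{\bj}$. It therefore suffices to bound $\max_t \E{\absv{\mathcal{S}_N(\mathcal{I}_h\varphi, T, t)}^j}$ and $\max_t \E{\absv{\mathcal{T}_N(\varphi, T, t)}^j}$ for a single $\varphi$ and the full exponent $j$. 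The exponents then recombine as $\prod_m (\cdot)^{j_m/2}$, the prefactors $(N^{-1}T)^{j_m/2}$ multiply to $(N^{-1}T)^{j/2}$, and the combinatorial factors $j^{3j\cdot j_m/j}$ multiply to $j^{3j}$.

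\textbf{The $\mathcal{S}_N$ bound.} I will apply Corollary~\ref{cor: sup-S-one-j} with exponent $j$. When $j=1$, first pass to the second moment by Jensen, which only affects constants. This gives a bound of the form
$$
\brack{2N^{-1}TC\max_t\norm{\dgrad\phi_{m,h}^t}^2}^{j/2}j^{3j}.
$$
The only remaining input is the uniform discrete-gradient estimate $\max_t \norm{\dgrad \phi_{m,h}^t} \leq C\norm{\varphi_m}_{H^{p+2}}$. This is the same Corollary~\ref{cor: discrete-gradient estimates} already used for $A_3$ in Lemma~\ref{lem: second-moments}. The constant it produces is absorbed into $C(d,\delta,\rhomin,\rhomax)$, giving \eqref{eq: sup-moments-S}.

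\textbf{The $\mathcal{T}_N$ bound.} By Lemma~\ref{lem: big-moments-lemma} with $M=1$, $\mathcal{T}_N(\varphi,T,t)$ is a martingale starting at $0$:
$$
\mathcal{T}_N(\varphi,T,t) = -N^{-1}\sum_r\int_0^t\nabla\phi^s(\bB_r(s))\cdot\dd\bB_r(s).
$$
Its quadratic variation is bounded by
$$
N^{-2}\sum_r\int_0^t\absv{\nabla\phi^s}^2\dd s \leq N^{-1}T\norm{\nabla\varphi}_{L^\infty}^2,
$$
using the maximum principle for the heat equation applied to $\nabla\phi^s$, which solves the same backward heat equation on the torus. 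I will then use BDG with constant of order $j^{j/2}$, or alternatively the It\^o recursion from Corollary~\ref{cor: sup-S-one-j} with the deterministic $L^\infty$ bound replacing Proposition~\ref{prop: Linfty-Linfty}. This yields $\E{\absv{\mathcal{T}_N}^j} \leq (N^{-1}T)^{j/2}j^j\norm{\nabla\varphi}_{L^\infty}^j$. Combining with H\"older as above gives \eqref{eq: sup-moments-T}.

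\textbf{Main obstacle.} The step needing the most care is tracking the $j$-dependent constants so that they recombine exactly into the stated $j^j$ and $j^{3j}$. I will use the exponent-$j$ single-observable bound, not the exponent-$j_m$ one, so that each factor carries $j^{3j}$ raised to the power $j_m/j$. A second point is checking that Corollary~\ref{cor: discrete-gradient estimates} is uniform in $t\in[0,T]$, which follows since the semi-discrete adjoint flow is energy-stable.
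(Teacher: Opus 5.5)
Your proposal is correct and follows the paper's proof. The paper obtains \eqref{eq: sup-moments-S} by the same multi-factor H\"older inequality combined with Corollary~\ref{cor: sup-S-one-j} and Corollary~\ref{cor: discrete-gradient estimates}, and for \eqref{eq: sup-moments-T} it simply defers to \cite[Corollary 17]{cornalba2023dean}. Your martingale argument, with quadratic variation at most $N^{-1}T\norm{\nabla\varphi}_{L^\infty}^2$ by the maximum principle, is a valid self-contained substitute. Of your two routes for the $\mathcal{T}_N$ moments, prefer the It\^o recursion: the statement has no free constant, and the recursion closes with exactly $j^j$, whereas a generic BDG constant only gives $(C\sqrt{j})^j$.
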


\begin{proof}
    The proof of \eqref{eq: sup-moments-S} follows from a multi-factor H\"older inequality, Corollary~\ref{cor: sup-S-one-j} and Corollary~\ref{cor: discrete-gradient estimates}. The proof of \eqref{eq: sup-moments-T} can be found in~\cite[Corollary 17]{cornalba2023dean}.
\end{proof}

\section{Heat estimates}\label{sec: heat-estimates}

We prove various error estimates for the adjoint heat flow \eqref{eq: adjoint-heat} that are crucial for our convergence analysis. Our first observation is that it is sufficient to study the regular heat equation by performing the time reversal $T-t \leftrightarrow t$. That is, we denote by $u^t$ the solution to
\begin{equation}\label{eq: regular-heat}
\partial_t u^t = \frac{1}{2}\Delta u^t, \quad \text{on}~ \btd \times (0, T],
\end{equation}
with initial condition $u_0 \coloneqq \varphi$. All the estimates we prove for $u^t$ then immediately hold for $\phi^t$. Similarly, in the semi-discrete setting, we seek $u_h^t(\cdot) \in V_p$ for $t \in (0, T]$ such that
\begin{equation}\label{eq: SIP-regular-heat}
(\partial_t u_h^t, v_h) + a_h(u_h^t, v_h) = 0,
\end{equation}
holds for all $v_h \in V_p$ where $u_h^0 = \mathcal{I}_h \varphi$ and $a_h$ is the SIP bilinear form.

\subsection{Standard estimates}

We recall standard maximum principle and energy estimates for the heat equation used frequently in the subsequent analysis.

\begin{lemma}\label{lem: max-principle}
    Let $u^t$ be the solution to \eqref{eq: regular-heat} with initial datum $\varphi \in H^{p+2}$. Then, there exists a constant $C > 0$ such that
    \begin{equation}
        \sup_{t \in [0, T]}\norm{u^t}_{L^\infty} + \sup_{t \in [0, T]}\norm{\nabla u^t}_{L^\infty} \leq C \norm{\varphi}_{H^{p+2}},
    \end{equation}
    and
    \begin{equation}\label{heat-estimate}
        \norm{u^t}_{L^\infty(0, T; H^{p+1})} + \norm{\partial_tu^t}_{L^2(0, T; H^{p+1})} \leq C \norm{\varphi}_{H^{p+2}}.
    \end{equation}
\end{lemma}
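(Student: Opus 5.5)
We need to prove Lemma~\ref{lem: max-principle}: $L^\infty$ bounds for $u^t$ and $\nabla u^t$, and the energy-type bound on $\norm{u^t}_{L^\infty(0,T;H^{p+1})}+\norm{\partial_t u^t}_{L^2(0,T;H^{p+1})}$, all in terms of $\norm{\varphi}_{H^{p+2}}$. On the torus everything diagonalises in Fourier series, so the plan is to work mode by mode. Writing $\varphi=\sum_{k\in\mathbb{Z}^d}\hat\varphi_k e^{ik\cdot \bx}$, the solution is $u^t=\sum_k e^{-t|k|^2/2}\hat\varphi_k e^{ik\cdot\bx}$. Since $e^{-t|k|^2/2}\le 1$, each Sobolev norm is non-increasing in time. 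In particular, $\norm{u^t}_{H^{s}}\le\norm{\varphi}_{H^{s}}$ for every $s$, which immediately gives $\norm{u^t}_{L^\infty(0,T;H^{p+1})}\le\norm{\varphi}_{H^{p+2}}$.

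For the sup-norm bounds, I would use the classical maximum principle, $\norm{u^t}_{L^\infty}\le\norm{\varphi}_{L^\infty}$. Since $\nabla u^t$ also solves the heat equation, the same principle gives $\norm{\nabla u^t}_{L^\infty}\le\norm{\nabla\varphi}_{L^\infty}$. The Sobolev embedding $H^{p+2}\hookrightarrow W^{1,\infty}$ then finishes this part. It holds because $p+2-1\ge 2>d/2$ for $p\ge1$ and $d\le3$.

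For the time-derivative term, differentiate mode by mode: $\partial_t u^t=-\tfrac12\Delta u^t$. Hence
$$
\int_0^T\norm{\partial_t u^t}_{H^{p+1}}^2\dd t=\frac14\sum_k(1+|k|^2)^{p+1}|k|^4|\hat\varphi_k|^2\int_0^T e^{-t|k|^2}\dd t\le\frac14\sum_k(1+|k|^2)^{p+1}|k|^2|\hat\varphi_k|^2,
$$
using $\int_0^\infty e^{-t|k|^2}\dd t=|k|^{-2}$ for $k\neq0$; the zero mode contributes nothing. The right-hand side is bounded by $\tfrac14\norm{\varphi}_{H^{p+2}}^2$. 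This is the parabolic smoothing gain of one derivative, which is the only delicate step. The alternative approach, testing the equation with $\Delta^{p+1}\partial_t u$, gives the same identity, $\tfrac12\int_0^T\norm{\partial_t u}_{\dot H^{p+1}}^2\dd t\le\tfrac14\norm{\varphi}^2_{\dot H^{p+2}}$, so either route works.

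Summing the three bounds gives the claim with a constant depending only on $d$ and $p$ and independent of $T$. The time-reversal remark at the start of Appendix~\ref{sec: heat-estimates} then transfers the estimates to $\phi^t$.
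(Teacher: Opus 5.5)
Your proof is correct and takes essentially the same approach as the paper, which states this lemma without proof and calls it a collection of standard maximum-principle and energy estimates. You supply exactly those: the maximum principle for $u^t$ and componentwise for $\nabla u^t$, with $H^{p+2}\hookrightarrow W^{1,\infty}$ valid for $p\ge 1$, $d\le 3$, and the Fourier/energy computation giving $\int_0^T\|\partial_t u^t\|_{H^{p+1}}^2\,\mathrm{d}t\le \tfrac14\|\varphi\|_{H^{p+2}}^2$ with a constant independent of $T$.
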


\subsection{Elliptic estimates}

\begin{definition}\label{def: ritz-projection}
    Let $a_h(\cdot, \cdot)$ be the SIP bilinear form (cf. Definition~\ref{def: SIP-bilinear-form}). We define the Ritz projection $\ritz u \in V_p$ of a function $u \in H^{1+r}$ for $r>1/2$ via
    $$
    a_h(\ritz u, v_h) = a_h(u, v_h) \quad \forall v_h \in V_p,
    $$
    subject to the constraint
    $$
    \int_{\btd} (\ritz u - u) \dd \bx = 0.
    $$
    The constraint ensures that the Ritz projection is unique since $a_h(\cdot, \cdot)$ does not distinguish constants.
\end{definition}

We place the following semi-norm on $V_p$:
$$
\norm{v_h}^2_{V_p} \coloneqq \norm{\nabla_h v_h}_{L^2(\btd; \mathbb{R}^d)}^2 + \absv{v_h}_J^2, \quad \absv{v_h}_J^2 \coloneqq \sum_{F \in \Fh}h_F^{-1} \norm{\jump{v_h}}_{L^2(F; \mathbb{R}^d)}^2.
$$
Since dG methods are non-conforming, we also define the space $V_\sharp \coloneqq V_p + H^{1+r}$ and equip it with the following semi-norm
$$
\norm{w}_{V_{\sharp}}^2 \coloneqq \norm{\nabla_h w}_{L^2(\btd)}^2 + \absv{w}_J^2 + \sum_{K \in \Th}h_K \norm{\bn_K \cdot \nabla w |_K}_{L^2(\partial K)}^2.
$$
Note that $\norm{\cdot}_{V_p}$ and $\norm{\cdot}_{V_\sharp}$ are only semi-norms since they vanish on constants. We also have the estimate
\begin{equation}\label{eq: Vsharp-Vp-equiv}
    \norm{v_h}_{V_\sharp} \leq C\norm{v_h}_{V_p}, \quad \forall v_h \in V_p,
\end{equation}
which follows from a discrete trace inequality (see~\cite[Chapter 38.3]{ern2021finite2}).

\begin{lemma}[Boundedness and coercivity]
    The SIP bilinear form $a_h(\cdot, \cdot)$ is bounded on $V_p \times V_p$. Moreover, the boundedness still holds when we view $a_h(\cdot, \cdot)$ as a map on $V_\sharp \times V_p$. Additionally, for penalty parameter $\eta > 0$ sufficiently large, $a_h(\cdot, \cdot)$ is coercive on $V_p \times V_p$. In mathematical terms, we have
    \begin{align*}
        \absv{a_h(v_h, w_h)} &\leq C\norm{v_h}_{V_p}\norm{w_h}_{V_p}, \\
        \absv{a_h(v, w_h)} &\leq C\norm{v}_{V_\sharp}\norm{w_h}_{V_p}, \\
        a_h(v_h, v_h) &\geq C \norm{v_h}^2_{V_p},
    \end{align*}
    for constants $C > 0$.
\end{lemma}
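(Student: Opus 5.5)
We prove the three inequalities in turn. Each one reduces to Cauchy--Schwarz on the volume terms, weighted Cauchy--Schwarz on the facet terms, and a discrete trace inequality that converts facet norms of $\nabla_h w_h$ back into volume norms. Recall
\[
a_h(v,w_h)=\tfrac12(\nabla_h v,\nabla_h w_h)-\tfrac12\sum_{F}\int_F\big(\jump{v}\cdot\avg{\nabla_h w_h}+\avg{\nabla_h v}\cdot\jump{w_h}\big)\dd S+\tfrac{\eta}{2}\sum_F h_F^{-1}\int_F\jump{v}\cdot\jump{w_h}\dd S .
\]
For any $w_h\in V_p$ we use the discrete trace inequality $\norm{\nabla w_h|_K}_{L^2(\partial K)}\le C_{\mathrm{tr}}h_K^{-1/2}\norm{\nabla w_h}_{L^2(K)}$, together with $h_F\simeq h_K$ from the mesh assumptions. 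Combined with the fact that each element has a bounded number of facets, this gives
\[
\sum_F h_F\norm{\avg{\nabla_h w_h}}^2_{L^2(F)}\le C\norm{\nabla_h w_h}^2 .
\]
This is the standard estimate of~\cite[Chapter 38]{ern2021finite2}.

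\textbf{Boundedness on $V_\sharp\times V_p$.} We treat this first, since it implies boundedness on $V_p\times V_p$ via \eqref{eq: Vsharp-Vp-equiv}. The volume term is bounded by $\norm{\nabla_h v}\norm{\nabla_h w_h}$, and the penalty term by $\eta|v|_J|w_h|_J$. For the first consistency term we insert $h_F^{-1/2}h_F^{1/2}$: Cauchy--Schwarz gives the bound $|v|_J\,(\sum_F h_F\norm{\avg{\nabla_h w_h}}^2_{L^2(F)})^{1/2}\le C|v|_J\norm{\nabla_h w_h}$ by the trace estimate above. The second consistency term is where $v$ need not be polynomial, so the discrete trace inequality is unavailable for $v$. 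Here we bound it by $(\sum_F h_F\norm{\avg{\nabla_h v}}^2_{L^2(F)})^{1/2}|w_h|_J$. Expanding the average into the two one-sided traces then yields at most a constant times the extra term $\sum_K h_K\norm{\bn_K\cdot\nabla v|_K}^2_{L^2(\partial K)}$ in $\norm{v}_{V_\sharp}$. One subtlety: only the normal component is controlled there, while $\jump{w_h}$ is parallel to the facet normal. Since $\avg{\nabla_h v}\cdot\jump{w_h}=(\avg{\nabla_h v}\cdot\bn_F)(\jump{w_h}\cdot \bn_F)$, only the normal component is actually needed, so this works.

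\textbf{Coercivity.} Set $v=w=v_h$. Then
\[
a_h(v_h,v_h)=\tfrac12\norm{\nabla_h v_h}^2-\sum_F\int_F\avg{\nabla_h v_h}\cdot\jump{v_h}\dd S+\tfrac{\eta}{2}|v_h|_J^2 .
\]
For the middle term we use Cauchy--Schwarz, the trace bound and Young's inequality with parameter $\epsilon$. Specifically, it is at most
\[
C\norm{\nabla_h v_h}\,|v_h|_J\le \tfrac14\norm{\nabla_h v_h}^2+C^2|v_h|_J^2 .
\]
Hence $a_h(v_h,v_h)\ge\frac14\norm{\nabla_h v_h}^2+(\frac{\eta}{2}-C^2)|v_h|_J^2$. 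Choosing $\eta>2C^2+\frac12$ gives $a_h(v_h,v_h)\ge\frac14\norm{v_h}_{V_p}^2$. The threshold depends only on $C_{\mathrm{tr}}$, and therefore only on $p$, $d$ and shape regularity, consistent with the remarks after Definition~\ref{def: SIP-bilinear-form}.

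The only genuinely delicate step is the consistency term with a non-discrete argument in the $V_\sharp$ bound. That is exactly why the $V_\sharp$ norm carries the extra boundary term, so once this is set up the remaining steps are routine.
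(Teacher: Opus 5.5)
Your proof is correct. It is the standard SIP argument: a discrete trace inequality plus Cauchy--Schwarz on the facet terms, reduction to the normal component $\avg{\nabla_h v}\cdot\bn_F$ for the $V_\sharp$ bound, and Young's inequality with $\eta$ large for coercivity. The paper gives no proof of this lemma and treats it as the standard result from Ern--Guermond, Chapter 38.

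One thing to tidy: in the $V_\sharp$ step, apply Cauchy--Schwarz to $(\avg{\nabla_h v}\cdot\bn_F)(\jump{w_h}\cdot\bn_F)$ from the outset. Your intermediate bound using the full $\avg{\nabla_h v}$ is not controlled by $\norm{v}_{V_\sharp}$, even though you correct for this afterwards.
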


\begin{theorem}\label{thm: SIP-elliptic}
    There exists $C>0$, independent of $h$ such that for all $w \in H^{p+1}$
    \begin{align}
        \norm{w - \ritz w}_{V_\sharp} \leq Ch^p \norm{w}_{H^{p+1}}, \\
        \norm{w - \ritz w}_{L^2} \leq Ch^{p+1}\norm{w}_{H^{p+1}}.
    \end{align}
\end{theorem}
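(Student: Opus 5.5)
This is the standard SIP-dG elliptic error analysis: energy estimate via Strang's second lemma, then $L^2$ estimate via an Aubin--Nitsche duality argument. The only periodic-specific point is the mean-zero normalisation in Definition~\ref{def: ritz-projection}. Throughout, write $e \coloneqq w - \ritz w$.

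\textbf{Energy estimate.}
1. Since $w \in H^{p+1} \subset H^{1+r}$ has no jumps, the definition of $\ritz$ is exactly Galerkin orthogonality: $a_h(e, v_h) = 0$ for all $v_h \in V_p$. This is consistency of SIP, which follows from elementwise integration by parts.
2. Split $e = (w - \mathcal{I}_h w) + (\mathcal{I}_h w - \ritz w) \eqqcolon \eta_I + \xi_h$, with $\xi_h \in V_p$.
3. Use coercivity, orthogonality and boundedness of $a_h$ on $V_\sharp \times V_p$:
$$
C\norm{\xi_h}_{V_p}^2 \leq a_h(\xi_h, \xi_h) = -a_h(\eta_I, \xi_h) \leq C \norm{\eta_I}_{V_\sharp}\norm{\xi_h}_{V_p}.
$$
4. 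Bound $\norm{\eta_I}_{V_\sharp}$ termwise by elementwise interpolation estimates plus the continuous trace inequality $\norm{v}_{L^2(\partial K)}^2 \lesssim h_K^{-1}\norm{v}_{L^2(K)}^2 + h_K\norm{\nabla v}_{L^2(K)}^2$. Each term scales like $h^p\norm{w}_{H^{p+1}}$. The facet weights $h_F^{-1}$ and $h_K$ are comparable by the mesh assumptions.
5. Combine with the triangle inequality and \eqref{eq: Vsharp-Vp-equiv} to get the first bound.

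\textbf{$L^2$ estimate by duality.}
1. Let $\bar e$ be the mean of $e$. The constraint in Definition~\ref{def: ritz-projection} gives $\bar e = 0$, so $e$ is mean-zero.
2. Solve the periodic problem $-\tfrac12\Delta z = e$ with $\int z = 0$. Solvability holds precisely because $e$ has zero mean. Elliptic regularity on the torus gives $\norm{z}_{H^2} \lesssim \norm{e}$.
3. Adjoint consistency of SIP (symmetry plus $\jump{z} = 0$ for $z \in H^2$) gives $\norm{e}^2 = (e, -\tfrac12\Delta z) = a_h(e, z)$. This requires $e \in V_\sharp$, i.e.\ $r>1/2$ so that traces of $\nabla w$ make sense.
4. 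Subtract $a_h(e, \mathcal{I}_h z) = 0$ to get $\norm{e}^2 = a_h(e, z - \mathcal{I}_h z)$.
5. Use boundedness of $a_h$ on $V_\sharp \times V_\sharp$. The proof of the $V_\sharp \times V_p$ bound extends verbatim, since all flux terms are controlled by the $h_K$-weighted normal-trace part of $\norm{\cdot}_{V_\sharp}$. This gives
$$
\norm{e}^2 \lesssim \norm{e}_{V_\sharp}\norm{z - \mathcal{I}_h z}_{V_\sharp} \lesssim h^p\norm{w}_{H^{p+1}}\cdot h\norm{z}_{H^2} \lesssim h^{p+1}\norm{w}_{H^{p+1}}\norm{e}.
$$
6. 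Divide by $\norm{e}$ to get the second bound.

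\textbf{Main obstacle.} The delicate step is the duality argument. It needs $a_h$ bounded with both arguments non-discrete, and adjoint consistency for a nonsmooth pair, so $z \in H^2$ and $e \in V_\sharp$ must have enough regularity for the face terms $\avg{\nabla_h z}\cdot\jump{e}$ and $\avg{\nabla_h e}\cdot\jump{z}$ to be well defined. Since $\jump{z} = 0$, only the first face term survives. It is bounded by Cauchy--Schwarz with weights $h_F^{\pm1/2}$, pairing $\absv{e}_J$ against the normal-trace seminorm of $z - \mathcal{I}_h z$. Interpolation of $z$ in $V_\sharp$ with only $H^2$ regularity gives the factor $h$.

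I would state the mean-zero/compatibility point explicitly and cite standard references such as \cite[Chapter 38]{ern2021finite2} for these bookkeeping bounds.
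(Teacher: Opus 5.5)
Your proposal is correct and is essentially the paper's argument. The paper simply cites the standard SIP energy estimate (coercivity, Galerkin orthogonality and boundedness on $V_\sharp \times V_p$ plus interpolation) and the Aubin--Nitsche duality estimate from \cite[Theorems 38.10 and 38.12]{ern2021finite2}, and its periodic adaptation is exactly the mean-zero constraint you use to make the dual problem $-\frac{1}{2}\Delta z = e$ solvable; your write-up just spells out those steps, including the valid $V_\sharp \times V_\sharp$ boundedness that the duality step needs.
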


\begin{proof}
    This is an adaptation of standard results~\cite[Theorem 38.10]{ern2021finite2} and~\cite[Theorem 38.12]{ern2021finite2} to the periodic setting.
\end{proof}

\subsection{Parabolic estimates}

Equipped with estimates in the elliptic setting, we can now prove the estimates we require for SIP discretisations of the heat equation. We have the following error estimate in the dG norm:

\begin{theorem}\label{thm: SIP-parabolic-energy}
    Let $u^t$ solve the continuous heat equation \eqref{eq: regular-heat} and $u_h^t$ solve the associated SIP semi-discretisation \eqref{eq: SIP-regular-heat}. Assume that $\varphi \in H^{p+2}$ and that $\eta > 0$ is sufficiently large so that $a_h(\cdot, \cdot)$ is coercive. Then there exists a constant $C>0$, independent of $h$, such that 
    $$
    \norm{u_h - u}_{L^\infty(0, T; V_\sharp)} \leq C h^p \norm{\varphi}_{H^{p+2}}.
    $$
\end{theorem}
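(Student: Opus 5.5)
We will use the standard Wheeler-type splitting through the Ritz projection of Definition~\ref{def: ritz-projection}. For each $t \in [0,T]$ write
$$
u_h^t - u^t = (u_h^t - \ritz u^t) + (\ritz u^t - u^t) =: \theta^t + \varrho^t ,
$$
so that $\theta^t \in V_p$ and $\varrho^t$ is a projection error. The second term is handled directly by Theorem~\ref{thm: SIP-elliptic}, which gives
$\norm{\varrho^t}_{V_\sharp} \le C h^p \norm{u^t}_{H^{p+1}}$. Combined with \eqref{heat-estimate} from Lemma~\ref{lem: max-principle}, this yields $\sup_t \norm{\varrho^t}_{V_\sharp} \le C h^p \norm{\varphi}_{H^{p+2}}$. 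Since $\norm{\theta^t}_{V_\sharp} \le C\norm{\theta^t}_{V_p}$ by \eqref{eq: Vsharp-Vp-equiv}, it remains to bound $\sup_t \norm{\theta^t}_{V_p}$.

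\textbf{Error equation.} The SIP form is consistent for $u^t \in H^{p+2}$, since jumps of $u^t$ and of its normal derivative vanish. Hence $(\partial_t u^t, v_h) + a_h(u^t, v_h) = 0$ for all $v_h \in V_p$. Subtracting this from \eqref{eq: SIP-regular-heat} and using the definition of $\ritz$, so that $a_h(\ritz u^t - u^t, v_h) = 0$, gives
$$
(\partial_t \theta^t, v_h) + a_h(\theta^t, v_h) = -(\partial_t \varrho^t, v_h), \qquad \forall v_h \in V_p .
$$
Note that $\partial_t \varrho^t = \ritz \partial_t u^t - \partial_t u^t$ because $\ritz$ is linear and time-independent. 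Theorem~\ref{thm: SIP-elliptic} then gives $\norm{\partial_t \varrho^t} \le C h^{p+1} \norm{\partial_t u^t}_{H^{p+1}}$.

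\textbf{Energy estimate in $V_p$.} Test with $v_h = \partial_t \theta^t$. By symmetry of $a_h$, this gives
$$
\norm{\partial_t \theta^t}^2 + \tfrac12 \tfrac{\dd}{\dd t} a_h(\theta^t,\theta^t) = -(\partial_t\varrho^t, \partial_t\theta^t) \le \tfrac12\norm{\partial_t\varrho^t}^2 + \tfrac12 \norm{\partial_t\theta^t}^2 .
$$
Integrating in time and using coercivity yields
$$
C\norm{\theta^t}_{V_p}^2 \le a_h(\theta^0,\theta^0) + \int_0^T \norm{\partial_t \varrho^s}^2 \dd s .
$$
Boundedness of $a_h$ gives $a_h(\theta^0,\theta^0) \le C\norm{\theta^0}_{V_p}^2$. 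The integral is at most $C h^{2p+2}\norm{\partial_t u}^2_{L^2(0,T;H^{p+1})} \le C h^{2p+2}\norm{\varphi}^2_{H^{p+2}}$, again by \eqref{heat-estimate}.

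\textbf{Main obstacle: the initial error.} The initial error is $\theta^0 = \mathcal{I}_h\varphi - \ritz \varphi$. We bound it by the triangle inequality in $V_\sharp$:
$$
\norm{\theta^0}_{V_p} \le C\bigl(\norm{\mathcal{I}_h\varphi - \varphi}_{V_\sharp} + \norm{\varphi - \ritz\varphi}_{V_\sharp}\bigr).
$$
Here we first check that $\norm{\cdot}_{V_p}$ and $\norm{\cdot}_{V_\sharp}$ agree on the difference up to the normal-derivative term. The second summand is $O(h^p\norm{\varphi}_{H^{p+1}})$ by Theorem~\ref{thm: SIP-elliptic}. For the first summand we use elementwise interpolation estimates: $\norm{\nabla_h(\varphi - \mathcal{I}_h\varphi)}_{L^2(K)} \le Ch^p|\varphi|_{H^{p+1}(K)}$ (with $\varphi\in H^{p+2}\subset C^0$ for $d\le3$, so nodal interpolation is defined). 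For the jump and normal-flux terms we use the continuous trace inequality $\norm{w}^2_{L^2(\partial K)} \le C(h_K^{-1}\norm{w}^2_{L^2(K)} + h_K\norm{\nabla w}^2_{L^2(K)})$ applied to $w = \varphi - \mathcal{I}_h\varphi$ and to $w = \nabla(\varphi - \mathcal{I}_h\varphi)$. The jump of $\varphi$ vanishes, so $\jump{\mathcal{I}_h\varphi - \varphi}$ is controlled this way; each contribution is $O(h^p\norm{\varphi}_{H^{p+1}})$, using quasi-uniformity and $h_F \simeq h_K$.

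\textbf{Conclusion.} Collecting the bounds gives $\sup_t\norm{\theta^t}_{V_p} \le Ch^p\norm{\varphi}_{H^{p+2}}$. Adding the bound on $\varrho^t$ gives the claim.

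The step needing most care is the consistency and Galerkin-orthogonality argument: $a_h$ must be applied with a non-discrete first argument in $V_\sharp$, which the boundedness lemma permits, and one must verify that the mean-zero constraint in the Ritz projection does not obstruct $\partial_t \varrho = \ritz\partial_t u - \partial_t u$. Both hold because the constraint is linear and $a_h$ annihilates constants.
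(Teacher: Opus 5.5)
Your proposal is correct and follows the paper's proof essentially step for step. You use the same Ritz splitting, the same test function $v_h = \partial_t(u_h^t - \ritz u^t)$ with symmetry and Young's inequality, and the same elliptic bounds on $\ritz u - u$ in $V_\sharp$ and on $\partial_t(u - \ritz u)$ in $L^2$. The only minor difference is the initial error. You bound $\norm{\theta^0}_{V_p}$ by the triangle inequality together with Theorem~\ref{thm: SIP-elliptic}. The paper instead uses the orthogonality $a_h(u^0 - \ritz u^0, \theta^0) = 0$, coercivity, and boundedness on $V_\sharp \times V_p$ to get $\norm{\theta^0}_{V_p} \lesssim \norm{\mathcal{I}_h u^0 - u^0}_{V_\sharp}$ directly. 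Both routes give $O(h^p)$.
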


\begin{proof}
    Throughout the proof, we use $\lesssim$ to denote inequality up to an arbitrary constant $C>0$ that depends on the shape regularity of the mesh, dimension, polynomial degree and $\eta>0$ but is independent of $h$.
    
    \textbf{Step 1:}
    Since the SIP bilinear form is consistent, we have
    $$
    (\partial_t(u_h^t - u^t), v_h) + a_h(u_h^t - u^t, v_h) = 0.
    $$
    Adding and subtracting the Ritz projection and using its definition gives
    \begin{equation}\label{eq: parabolic-error-split}
    (\partial_t(u_h^t - \ritz u^t), v_h) + a_h(u_h^t - \ritz u^t, v_h) = (\partial_t(u^t - \ritz u^t), v_h).
    \end{equation}
    We then choose $v_h = \partial_t(u_h^t - \ritz u^t)$ which gives
    $$
    \norm{\partial_t(u_h^t - \ritz u^t)}^2 + a_h(u_h^t - \ritz u^t, \partial_t(u_h^t - \ritz u^t)) = (\partial_t(u^t - \ritz u^t), \partial_t(u_h^t - \ritz u^t)).
    $$
    Using the symmetry of $a_h(\cdot, \cdot)$, and the Cauchy--Schwarz and Young inequalities gives
    \begin{multline*}
    \norm{\partial_t(u_h^t - \ritz u^t)}^2 + \frac{1}{2}\frac{\dd}{\dd t}a_h(u_h^t - \ritz u^t, u_h^t - \ritz u^t) \\
    \leq \frac{1}{2}\norm{\partial_t(u^t - \ritz u^t)}^2 + \frac{1}{2}\norm{\partial_t(u_h^t - \ritz u^t)}^2.
    \end{multline*}
    It then follows that
    $$
    \frac{\dd}{\dd t}a_h(u_h^t - \ritz u^t, u_h^t - \ritz u^t) \leq \norm{\partial_t(u^t - \ritz u^t)}^2.
    $$
    Integrating in time up to time $t \leq T$ gives
    \begin{equation}\label{eq: sip-ritz-integrated}
    a_h(u_h^t - \ritz u^t, u_h^t - \ritz u^t) \leq a_h(u_h^0 - \ritz u^0, u_h^0 - \ritz u^0) + \int_0^t \norm{\partial_t(u^r - \ritz u^r)}^2 \dd r.
    \end{equation}
    For the first term, viewing $a_h(\cdot, \cdot)$ as a map on $V_\sharp \times V_p$ and recalling that $u_h^0 = \mathcal{I}_h u^0$, we can add and subtract $u^0$ to get
    $$
    a_h(u_h^0 - \ritz u^0, u_h^0 - \ritz u^0) = a_h(\mathcal{I}_h u^0 - u^0, u_h^0 - \ritz u^0) + \underbrace{a_h(u^0 - \ritz u^0, u_h^0 - \ritz u^0)}_{=0},
    $$
    where the latter term vanishes by definition of the Ritz projection. We now use coercivity of $a_h(\cdot, \cdot)$ on $V_p \times V_p$ and boundedness on $V_\sharp \times V_p$ to deduce that
    $$
    \norm{u_h^0 - \ritz u^0}_{V_p}^2 \lesssim a_h(\mathcal{I}_h u^0 - u^0, u_h^0 - \ritz u^0) \lesssim \norm{\mathcal{I}_h u^0 - u^0}_{V_\sharp} \norm{u_h^0 - \ritz u^0}_{V_p},
    $$
    from which it follows that
    $$
    \norm{u_h^0 - \ritz u^0}_{V_p} \lesssim \norm{\mathcal{I}_h u^0 - u^0}_{V_\sharp}.
    $$
    We therefore obtain the estimate
    \begin{equation}\label{eq: initial-sip-ritz}
    a_h(u_h^0 - \ritz u^0, u_h^0 - \ritz u^0) \lesssim \norm{\mathcal{I}_h u^0 - u^0}_{V_\sharp}^2.
    \end{equation}
    Equipped with the estimate \eqref{eq: initial-sip-ritz}, we then use coercivity in \eqref{eq: sip-ritz-integrated} to get 
    $$
    \norm{u_h^t - \ritz u^t}_{V_p}^2 \lesssim \norm{\mathcal{I}_h u^0 - u^0}_{V_\sharp}^2 + \int_0^t \norm{\partial_t(u^r - \ritz u^r)}^2 \dd r.
    $$
    Extending the integral up to the final time, and using that $\sqrt{a^2+b^2} \leq a+b$ for $a, b \geq 0$ gives
    \begin{equation}\label{eq: parabolic-step1-final}
    \norm{u_h^t - \ritz u^t}_{V_p} \lesssim \norm{\mathcal{I}_h u^0 - u^0}_{V_\sharp} + \norm{\partial_t(u - \ritz u)}_{L^2(0, T; L^2(\btd))}.
    \end{equation}

    \textbf{Step 2:} Adding and subtracting the Ritz projection and using \eqref{eq: Vsharp-Vp-equiv} gives
    $$
    \norm{u_h^t - u^t}_{V_\sharp} \lesssim \norm{u_h^t - \ritz u^t}_{V_p} + \norm{\ritz u^t - u^t}_{V_\sharp}.
    $$
    Coupled with \eqref{eq: parabolic-step1-final} we get
    \begin{equation}\label{eq: parabolic-step2-final}
    \norm{u_h^t - u^t}_{V_\sharp} \lesssim \norm{\mathcal{I}_h u^0 - u^0}_{V_\sharp} + \norm{\partial_t(u - \ritz u)}_{L^2(0, T; L^2(\btd))} + \norm{\ritz u^t - u^t}_{V_\sharp}.
    \end{equation}

    \textbf{Step 3:}
    We have the estimate 
    $$
    \norm{\mathcal{I}_h u^0 - u^0}_{V_\sharp} \lesssim h^p\norm{\varphi}_{H^{p+1}}.
    $$
    This follows from recalling the definition of the $V_\sharp$ semi-norm, using a standard interpolation estimate~\cite[Theorem 11.13]{ern2021finite} and a multiplicative trace inequality~\cite[Lemma 12.15]{ern2021finite}.

    \textbf{Step 4:} It remains to bound the latter two terms in \eqref{eq: parabolic-step2-final}. To deal with $\norm{\ritz u^t - u^t}_{V_\sharp}$, we use Theorem~\ref{thm: SIP-elliptic} and standard properties of the heat equation (cf.~\eqref{heat-estimate}) to write
    $$
    \norm{\ritz u^t - u^t}_{V_\sharp} \lesssim h^p\norm{u^t}_{H^{p+1}} \lesssim h^p \norm{u^0}_{H^{p+2}}.
    $$
    For $\norm{\partial_t(u - \ritz u)}_{L^2(0, T; L^2(\btd))}^2$, we write
    \begin{align*}
        \norm{\partial_t(u - \ritz u)}_{L^2(0, T; L^2(\btd))}^2 &= \int_0^T \norm{\partial_t u^t - \ritz (\partial_t u^t)}_{L^2}^2 \dd t, \\
        &\lesssim h^{2p+2} \int_0^T \norm{\partial_t u^t}_{H^{p+1}}^2 \dd t, \\
        &\lesssim h^{2p+2}\norm{u^0}_{H^{p+2}}^2,
    \end{align*}
    where we have used Theorem~\ref{thm: SIP-elliptic} and standard energy estimates for the heat equation. Altogether, we have the estimate
    $$
    \norm{u_h^t - u^t}_{V_\sharp} \lesssim h^p \norm{\varphi}_{H^{p+2}}.
    $$
    Taking the supremum in time gives the result.
\end{proof}

\begin{theorem}\label{thm: SIP-parabolic-L2}
    Under the same assumptions as Theorem~\ref{thm: SIP-parabolic-energy} there exists a constant $C_T>0$, depending on $T$ but independent of $h$, such that
    $$
    \norm{u_h - u}_{L^\infty(0, T; L^2)} \leq C_T h^{p+1} \norm{\varphi}_{H^{p+2}}.
    $$
\end{theorem}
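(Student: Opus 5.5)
We prove this with the standard Ritz-projection splitting $u_h^t - u^t = \theta^t + \varrho^t$. Here $\theta^t \coloneqq u_h^t - \ritz u^t \in V_p$ is the discrete part and $\varrho^t \coloneqq \ritz u^t - u^t$ is the projection part. The projection part is handled immediately by the $L^2$ estimate of Theorem~\ref{thm: SIP-elliptic} together with the heat regularity \eqref{heat-estimate}:
$$
\sup_{t \in [0,T]}\norm{\varrho^t} \lesssim h^{p+1}\sup_{t}\norm{u^t}_{H^{p+1}} \lesssim h^{p+1}\norm{\varphi}_{H^{p+2}}.
$$
The work therefore lies in bounding $\theta^t$ in $L^2$ with the optimal rate $h^{p+1}$.

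For $\theta$ we start from the error equation \eqref{eq: parabolic-error-split} of Theorem~\ref{thm: SIP-parabolic-energy}. Now, however, we test with $v_h = \theta^t$ rather than with $\partial_t \theta^t$. Coercivity gives $a_h(\theta^t,\theta^t) \geq 0$, and Cauchy--Schwarz gives
$$
\tfrac12 \tfrac{\dd}{\dd t}\norm{\theta^t}^2 \leq \norm{\partial_t(u^t - \ritz u^t)}\,\norm{\theta^t}.
$$
We use the standard trick of dividing by $\norm{\theta^t}$, or equivalently regularising with $\sqrt{\norm{\theta^t}^2+\epsilon^2}$. Integrating then yields
$$
\norm{\theta^t} \leq \norm{\theta^0} + \int_0^T \norm{\partial_t u^s - \ritz(\partial_t u^s)}\dd s.
$$
This uses that $\ritz$ commutes with $\partial_t$, which holds since $\ritz$ is linear and time-independent. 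Applying the $L^2$ part of Theorem~\ref{thm: SIP-elliptic} to $\partial_t u^s$ and then Cauchy--Schwarz in time, the integral is bounded by
$$
C h^{p+1}\sqrt{T}\,\norm{\partial_t u}_{L^2(0,T;H^{p+1})} \lesssim T^{1/2}h^{p+1}\norm{\varphi}_{H^{p+2}}.
$$
The second inequality follows from \eqref{heat-estimate}, and it accounts for the $T$-dependence of $C_T$.

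The initial term is $\theta^0 = \mathcal{I}_h\varphi - \ritz\varphi$. By the triangle inequality,
$$
\norm{\theta^0} \leq \norm{\mathcal{I}_h\varphi - \varphi} + \norm{\varphi - \ritz\varphi}.
$$
The standard interpolation estimate \cite[Theorem 11.13]{ern2021finite} and Theorem~\ref{thm: SIP-elliptic} bound both pieces by $Ch^{p+1}\norm{\varphi}_{H^{p+1}}$. Combining this with the bounds on $\varrho$ and on the integral, and taking the supremum over $t \in [0,T]$, gives the claimed estimate.

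The one delicate point is the use of the $L^2$ Ritz estimate in Theorem~\ref{thm: SIP-elliptic}. That estimate relies on an Aubin--Nitsche duality argument, and on the torus this needs both the mean-zero constraint in Definition~\ref{def: ritz-projection} and elliptic regularity for the periodic Laplacian on mean-zero data. Since the theorem is already stated, we take it as given. We only need to check that the constraint $\int(\ritz w - w)=0$ is compatible with time differentiation, so that $\partial_t \ritz u = \ritz \partial_t u$. This holds because the constraint is linear and time-independent. No further obstacles are expected.
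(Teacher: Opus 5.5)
Your proof is correct and follows essentially the same route as the paper: the same Ritz splitting, the same test function $v_h = u_h^t - \ritz u^t$ in \eqref{eq: parabolic-error-split}, and the same treatment of the initial and projection terms. The only difference is that you close the energy inequality by dividing by $\norm{\theta^t}$, where the paper uses Young's inequality and Gr\"onwall; this gives a constant growing like $1 + T^{1/2}$ rather than exponentially in $T$.
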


\begin{proof}
    The proof is similar to that of Theorem~\ref{thm: SIP-parabolic-energy}. We use the same splitting \eqref{eq: parabolic-error-split} but choose $v_h = u_h^t - \ritz u^t$, use coercivity and finally Young's inequality to get
    $$
    \frac{1}{2}\frac{\dd}{\dd t} \norm{u_h^t - \ritz u^t}^2 \lesssim \norm{\partial_t(u^t - \ritz u^t)}^2 + \norm{u_h^t - \ritz u^t}^2.
    $$
    Gr\"onwall's inequality gives
    $$
    \norm{u_h^t - \ritz u^t}^2 \lesssim_T \norm{u_h^0 - \ritz u^0}^2 + \int_0^T \norm{\partial_t(u^t - \ritz u^t)}^2 \dd t.
    $$
    It is straightforward to estimate
    $$
    \norm{u_h^0 - \ritz u^0} \lesssim h^{p+1}\norm{\varphi}_{H^{p+2}},
    $$
    by adding and subtracting $u^0$, using a standard interpolation estimate~\cite[Theorem 11.13]{ern2021finite}, and Theorem~\ref{thm: SIP-elliptic}. For the term inside the integral, we use Theorem~\ref{thm: SIP-elliptic} and standard estimates for the heat equation (cf.~\eqref{heat-estimate}) to get
    $$
    \int_0^T\norm{\partial_t(u^t - \ritz u^t)}^2 \dd t \lesssim h^{2p+2}\norm{\varphi}^2_{H^{p+2}}.
    $$
    Altogether, we have that
    $$
    \norm{u_h^t - \ritz u^t} \lesssim_T h^{p+1}\norm{\varphi}_{H^{p+2}}.
    $$
    We then note the decomposition
    $$
    \norm{u_h^t - u^t} \leq \norm{u_h^t - \ritz u^t} + \norm{u^t - \ritz u^t}.
    $$
    We have already estimated the first term, and it is straightforward to estimate the second using Theorem~\ref{thm: SIP-elliptic}. The result follows.
    
\end{proof}

\subsection{Gradient estimates}

We first recall the following lemma providing estimates on the lifting operators defined in \eqref{eq: local-lift} and \eqref{eq: global-lift}.

\begin{lemma}[{\cite[Lemma 38.15]{ern2021finite2}}]\label{lem: ern-lift-lemma}
    For any $F \in \Fh$ let $\mathcal{T}_F \coloneqq \{K \in \Th : F \subset \partial K\}$ denote the pair of elements sharing the facet $F$, and define $D_F \coloneqq \mathrm{int}\brack{\bigcup_{K \in \mathcal{T}_F}K}$. Then for any $\ell \geq 0$, $\boldsymbol{\mathcal{L}}_F^\ell$ is supported on $D_F$ and the following estimates hold:
    \begin{align}
        \norm{\boldsymbol{\mathcal{L}}_F^\ell(\bv)}_{L^2(D_F; \mathbb{R}^d)} &\leq Ch_F^{-1/2}\norm{\bv}_{L^2(F; \mathbb{R}^d)}, \quad \forall \bv \in L^2(F; \mathbb{R}^d),~ \forall F \in \Fh, \\
        \norm{\boldsymbol{\mathcal{L}}_h^\ell(\jump{v})}_{L^2(\btd; \mathbb{R}^d)} &\leq C \absv{v}_J, \quad \forall v \in \mathcal{H}^1(\Th),
    \end{align}
    where the constants $C>0$ are independent of $h$.
\end{lemma}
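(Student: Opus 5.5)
The statement is Lemma~\ref{lem: ern-lift-lemma}, a standard result quoted from \cite{ern2021finite2}; I would prove it by a scaling/duality argument applied facet by facet.

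\textbf{Support.} Fix $F \in \Fh$ and $\bu \in L^2(F;\mathbb{R}^d)$. The right-hand side of \eqref{eq: local-lift} only involves traces of $\bv_h$ from the two elements of $\mathcal{T}_F$. Take $\bv_h = \boldsymbol{\mathcal{L}}_F^\ell(\bu)\mathbbm{1}_K$ for any $K \notin \mathcal{T}_F$, which is admissible since $\boldsymbol{V}_\ell^d$ is discontinuous. Then the right-hand side vanishes and $\norm{\boldsymbol{\mathcal{L}}_F^\ell(\bu)}_{L^2(K)}^2 = 0$. Hence $\boldsymbol{\mathcal{L}}_F^\ell(\bu)$ is supported on $D_F$.

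\textbf{Local bound.} Test \eqref{eq: local-lift} with $\bv_h = \boldsymbol{\mathcal{L}}_F^\ell(\bu)$ and apply Cauchy--Schwarz on $F$:
\[
\norm{\boldsymbol{\mathcal{L}}_F^\ell(\bu)}_{L^2(D_F)}^2 \le \norm{\avg{\boldsymbol{\mathcal{L}}_F^\ell(\bu)}}_{L^2(F)}\norm{\bu}_{L^2(F)}.
\]
The key tool is the discrete trace inequality $\norm{q}_{L^2(F)} \le C h_K^{-1/2}\norm{q}_{L^2(K)}$ for polynomials $q \in \boldsymbol{P}_\ell^d(K)$ with $F \subset \partial K$. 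It follows by mapping to the reference simplex, using equivalence of norms on the finite-dimensional polynomial space, and scaling back, with shape regularity making $C$ uniform. Applying it on both elements of $\mathcal{T}_F$ and using $h_K \sim h_F$ gives $\norm{\avg{\boldsymbol{\mathcal{L}}_F^\ell(\bu)}}_{L^2(F)} \le C h_F^{-1/2}\norm{\boldsymbol{\mathcal{L}}_F^\ell(\bu)}_{L^2(D_F)}$. Dividing through yields the first estimate.

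\textbf{Global bound.} This is the only step needing care, because the local lifts overlap. Each point of $\btd$ lies in at most $d+1$ of the sets $D_F$, namely those $F \in \Fk$ for the element $K$ containing it. Hence, pointwise,
\[
\Big|\sum_F \boldsymbol{\mathcal{L}}_F^\ell(\jump{v})\Big|^2 \le (d+1)\sum_F \big|\boldsymbol{\mathcal{L}}_F^\ell(\jump{v})\big|^2.
\]
Integrating and inserting the local bound with $\bu = \jump{v}|_F$ gives
\[
\norm{\globallift^\ell(\jump{v})}^2 \le C\sum_F h_F^{-1}\norm{\jump{v}}_{L^2(F)}^2 = C\absv{v}_J^2 .
\]
Taking square roots gives the second estimate. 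The constants depend only on $d$, $\ell$ and shape regularity, and on the torus periodically identified facets are handled exactly like interior ones.
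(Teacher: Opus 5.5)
Your proof is correct and follows the standard argument behind the cited result. The paper gives no proof of its own and simply quotes Ern--Guermond, Lemma 38.15. That argument has three steps: the support property follows by testing with the lift times the indicator of an element not adjacent to $F$. The local bound follows from testing with the lift itself, then applying the discrete trace inequality together with $h_K \sim h_F$. The global bound follows from the bounded overlap of at most $d+1$ local lifts on each simplex.
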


\begin{corollary}\label{cor: coercivity-type-bound-discrete-gradient}
    For any $v_h \in V_p$, we have the following estimate
    \begin{equation}
        \norm{\dgrad v_h}^2 \leq C a_h(v_h, v_h),
    \end{equation}
    provided $\eta > 0$ is chosen large enough to ensure coercivity of $a_h(\cdot, \cdot)$.
\end{corollary}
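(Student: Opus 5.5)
The bound follows by combining the definition of $\dgrad$ with the lifting estimate of Lemma~\ref{lem: ern-lift-lemma} and the coercivity of $a_h(\cdot,\cdot)$. For $v_h \in V_p$, the definition \eqref{eq: discrete-gradient} and the triangle inequality give
$$
\norm{\dgrad v_h} \leq \norm{\nabla_h v_h} + \norm{\globallift(\jump{v_h})}.
$$
Lemma~\ref{lem: ern-lift-lemma} with $\ell = p-1$ controls the global lift by the jump seminorm, $\norm{\globallift(\jump{v_h})} \leq C\absv{v_h}_J$. Squaring and using $(a+b)^2 \leq 2a^2 + 2b^2$ then yields
$$
\norm{\dgrad v_h}^2 \leq 2\norm{\nabla_h v_h}^2 + 2C^2\absv{v_h}_J^2 \leq C'\norm{v_h}_{V_p}^2 .
$$

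We conclude with the coercivity of the SIP bilinear form on $V_p \times V_p$ from the Boundedness and coercivity lemma. Provided $\eta$ is large enough, $a_h(v_h,v_h) \geq c\norm{v_h}_{V_p}^2$, and hence $\norm{\dgrad v_h}^2 \leq (C'/c)\, a_h(v_h,v_h)$. The constant depends only on shape regularity, $p$, $d$ and $\eta$, and not on $h$.

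There is no real obstacle here; the only point to check is that the lifting estimate of Lemma~\ref{lem: ern-lift-lemma} applies to the degree $\ell=p-1$ lift used in $\dgrad$. It does, since the lemma holds for any $\ell \geq 0$, and for $p\ge1$ we have $\ell = p-1 \ge 0$. The summation over facets in $\globallift$ causes no loss, because each local lift is supported on $D_F$ and the mesh has finite overlap; this is already accounted for in the global form of the lemma.
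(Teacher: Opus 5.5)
Your proof is correct and follows the same route as the paper: triangle inequality on the definition of $\dgrad$, the lifting bound $\norm{\globallift(\jump{v_h})} \leq C\absv{v_h}_J$ from Lemma~\ref{lem: ern-lift-lemma} to reach $C\norm{v_h}_{V_p}$, and then coercivity of $a_h(\cdot,\cdot)$. Your additional checks (that $\ell = p-1 \geq 0$ and that the global lift estimate already accounts for the facet overlap) are accurate; the paper leaves them implicit.
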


\begin{proof}
    By definition of the discrete gradient and using Lemma~\ref{lem: ern-lift-lemma}, we estimate
    \begin{align*}
        \norm{\dgrad v_h} &\leq \norm{\nabla_h v_h} + \norm{\boldsymbol{\mathcal{L}}_h(\jump{v_h})}, \\
        &\leq \norm{\nabla_h v_h} + C \absv{v_h}_J, \\
        &\leq C \norm{v_h}_{V_p}.
    \end{align*}
    The result follows by coercivity of $a_h(\cdot, \cdot)$ with respect to $\norm{\cdot}_{V_p}$.
\end{proof}

\begin{corollary}\label{cor: discrete-gradient estimates}
    For $i =1,2$, let $u_i^t$ solve the continuous heat equation \eqref{eq: regular-heat} with initial datum $\varphi_i \in H^{p+2}$. Let $u_{i, h}^t$ solve the associated SIP semi-discretisation \eqref{eq: SIP-regular-heat} with initial datum $\mathcal{I}_h \varphi_i$. Then under the assumptions of Theorem~\ref{thm: SIP-parabolic-energy}, there exists $C>0$, independent of $h$, such that the following estimates hold:
    \begin{align}
        \norm{\dgrad u_{i, h} - \nabla u_i}_{L^\infty(0, T; L^2)} &\leq Ch^p \norm{\varphi_i}_{H^{p+2}}, \label{eq: dgrad1}\\
        \norm{\dgrad u_{i, h}}_{L^\infty(0, T; L^2)} &\leq C \norm{\varphi_i}_{H^{p+2}}, \label{eq: dgrad2} \\
        \norm{\dgrad u_{1, h} \cdot \dgrad u_{2, h} - \nabla u_1 \cdot \nabla u_2}_{L^\infty(0, T; L^1)} &\leq Ch^p \norm{\varphi_1}_{H^{p+2}}\norm{\varphi_2}_{H^{p+2}}. \label{eq: dgrad3}
    \end{align}
\end{corollary}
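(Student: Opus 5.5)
The plan is to establish \eqref{eq: dgrad1} first and then deduce \eqref{eq: dgrad2} and \eqref{eq: dgrad3} from it by the triangle inequality and Cauchy--Schwarz.

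For \eqref{eq: dgrad1}, fix $t$. Since $u_i^t \in H^{p+2}$ is continuous, its jumps vanish, so $\globallift(\jump{u_i^t}) = 0$ and $\dgrad u_i^t = \nabla u_i^t$. By linearity of the lifting,
\begin{align*}
\dgrad u_{i,h}^t - \nabla u_i^t = \nabla_h (u_{i,h}^t - u_i^t) - \globallift(\jump{u_{i,h}^t - u_i^t}).
\end{align*}
Lemma~\ref{lem: ern-lift-lemma} applies to any $\mathcal{H}^1(\Th)$ function, in particular to $u_{i,h}^t - u_i^t$, and bounds the lifting term by $C\absv{u_{i,h}^t - u_i^t}_J$. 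Hence
\begin{align*}
\norm{\dgrad u_{i,h}^t - \nabla u_i^t} \leq \norm{\nabla_h(u_{i,h}^t - u_i^t)} + C\absv{u_{i,h}^t-u_i^t}_J \leq C\norm{u_{i,h}^t - u_i^t}_{V_\sharp}.
\end{align*}
Taking the supremum over $t$ and applying Theorem~\ref{thm: SIP-parabolic-energy} gives $Ch^p\norm{\varphi_i}_{H^{p+2}}$. This is the only nontrivial step. The one point to check is that the lifting estimate is valid on $V_\sharp$-type functions and not just on $V_p$. It is, because the lemma is stated on $\mathcal{H}^1(\Th)$. Moreover, the jumps of $u_i^t$ vanish, so the jump seminorm reduces to that of $u_{i,h}^t$.

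For \eqref{eq: dgrad2}, use the triangle inequality:
\begin{align*}
\norm{\dgrad u_{i,h}^t} \leq \norm{\nabla u_i^t} + Ch^p\norm{\varphi_i}_{H^{p+2}}.
\end{align*}
The first term is bounded by $C\norm{\varphi_i}_{H^{p+2}}$, by $L^2$-contractivity of the heat semigroup applied to $\nabla\varphi_i$, or by Lemma~\ref{lem: max-principle}. The second term is at most $C\norm{\varphi_i}_{H^{p+2}}$ because $h \leq 1$.

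For \eqref{eq: dgrad3}, decompose
\begin{align*}
\dgrad u_{1,h}\cdot\dgrad u_{2,h} - \nabla u_1\cdot\nabla u_2 = (\dgrad u_{1,h} - \nabla u_1)\cdot \dgrad u_{2,h} + \nabla u_1 \cdot(\dgrad u_{2,h} - \nabla u_2).
\end{align*}
Apply Cauchy--Schwarz in $L^1$ to each product, pointwise in $t$. The first product is bounded by \eqref{eq: dgrad1} for $i=1$ times \eqref{eq: dgrad2} for $i=2$. The second is bounded by $\norm{\nabla u_1^t}\le C\norm{\varphi_1}_{H^{p+2}}$ times \eqref{eq: dgrad1} for $i=2$. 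Taking the supremum in time gives the bound $Ch^p\norm{\varphi_1}_{H^{p+2}}\norm{\varphi_2}_{H^{p+2}}$.
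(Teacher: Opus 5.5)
Your proposal is correct and matches the paper's proof essentially step for step. The paper also uses $\dgrad u_i^t = \nabla u_i^t$ together with Lemma~\ref{lem: ern-lift-lemma} and Theorem~\ref{thm: SIP-parabolic-energy} for \eqref{eq: dgrad1}, the triangle inequality for \eqref{eq: dgrad2}, and an add-and-subtract splitting with H\"older/Cauchy--Schwarz for \eqref{eq: dgrad3}; the only difference is which factor carries the discrete gradient in your splitting of the product, and that choice is immaterial.
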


\begin{proof}
    Since $u_i^t$ is smooth, we have $\jump{u_i^t} = 0$ on every facet hence $\nabla u_i^t = \dgrad u_i^t$. Therefore,
    \begin{align*}
    \norm{\dgrad u_{i, h}^t - \nabla u_i^t}_{L^2} &= \norm{\dgrad u_{i, h}^t - \dgrad u_i^t}_{L^2}, \\
    &\leq \norm{\nabla_h u_{i, h}^t - \nabla u_i^t}_{L^2} + \norm{\boldsymbol{\mathcal{L}}_h(\jump{u_{i, h}^t - u_i^t})}, \\
    &\leq \norm{\nabla_h u_{i, h}^t - \nabla u_i^t}_{L^2} + C\absv{u_{i, h}^t - u_i^t}_J, \\
    &\leq Ch^{p} \norm{\varphi_i}_{H^{p+2}},
    \end{align*}
    where we have used Theorem~\ref{thm: SIP-parabolic-energy} in the last inequality.
    Taking the supremum in time proves \eqref{eq: dgrad1}. For the second estimate, observe that
    \begin{align*}
        \norm{\dgrad u_{i, h}^t} &\leq \norm{\dgrad u_{i, h}^t - \nabla u_i^t} + \norm{\nabla u_i^t} \\
        &\leq Ch^p \norm{\varphi_i}_{H^{p+2}} + C \norm{\nabla u_i^t}_{L^\infty},
    \end{align*}
    where we have used \eqref{eq: dgrad1} and the simple embeddings $L^\infty \hookrightarrow L^2$ and $H^{p+2} \hookrightarrow H^{p+1}$. Taking the supremum in time and applying standard maximum principle estimates for the heat equation gives \eqref{eq: dgrad2}. For the third estimate, we write
    \begin{align*}
        &\norm{\dgrad u_{1, h}^t \cdot \dgrad u_{2, h}^t - \nabla u_1^t \cdot \nabla u_2^t}_{L^1} \\
        &\quad\quad\quad\leq \norm{\dgrad u_{1, h}^t \cdot \dgrad u_{2, h}^t - \dgrad u_{1, h}^t \cdot \nabla u_2^t}_{L^1} + \norm{\dgrad u_{1, h}^t \cdot \nabla u_2^t - \nabla u_1^t \cdot \nabla u_2^t}_{L^1}, \\
        &\quad\quad\quad\leq \norm{\dgrad u_{1, h}^t} \norm{\dgrad u_{2, h}^t - \nabla u_2^t} + \norm{\nabla u_2^t} \norm{\dgrad u_{1, h}^t - \nabla u_1^t}, \\
        &\quad\quad\quad\leq Ch^p\norm{\varphi_1}_{H^{p+2}}\norm{\varphi_2}_{H^{p+2}},
    \end{align*}
    where we have used H\"older's inequality, \eqref{eq: dgrad1}, \eqref{eq: dgrad2} and standard maximum principle estimates for the heat equation.
\end{proof}

\section{Proofs for the convergence analysis}\label{app: proofs}

We provide proofs for the various results stated in Section~\ref{sec: main-result}.

\subsection{$L^\infty(L^\infty)$ estimates}

We first require the following three technical lemmas. Lemma~\ref{lem: norm-equivalence} is used to prove Step 3 of Proposition~\ref{prop: Linfty-Linfty} whilst Lemmas~\ref{lem: basis-mass-bounds} and~\ref{lem: sip-bound-technical} are used to prove Step 4 of Proposition~\ref{prop: Linfty-Linfty}.

\begin{lemma}\label{lem: norm-equivalence}
    Fix an element $K \in \Th$. Then there exists a finite set of points $\{\by_i\}_{i=1}^{n_p} \subset \overline{K}$ and a constant $C_{\mathrm{eq}} = C_{\mathrm{eq}}(p, d) > 0$ such that for every $f \in \scalarpolys$
    $$
    \norm{f}_{L^\infty(K)} \leq C_{\mathrm{eq}} \max_{1 \leq i \leq n_p} \absv{f(\by_i)},
    $$
    where $n_p = \mathrm{dim}(\scalarpolys)$.
\end{lemma}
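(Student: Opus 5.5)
We prove the estimate on the reference simplex and transport it to $K$ with the affine map. Let $\widehat K$ be the reference simplex and $F_K:\widehat K\to K$, $F_K(\hat\bx)=A_K\hat\bx+\boldsymbol{b}_K$, the affine bijection from the mesh assumptions. Composition with $F_K$ maps $\scalarpolys$ bijectively onto $P_p(\widehat K)$, since affine maps preserve polynomial degree. It also preserves sup norms and point values: $\norm{f}_{L^\infty(K)}=\norm{f\circ F_K}_{L^\infty(\widehat K)}$ and $f(F_K(\hat\by))=(f\circ F_K)(\hat\by)$. So it suffices to find points $\{\hat\by_i\}_{i=1}^{n_p}\subset\widehat K$ and a constant $C_{\mathrm{eq}}$ with
\[
\norm{\hat f}_{L^\infty(\widehat K)}\le C_{\mathrm{eq}}\max_i\absv{\hat f(\hat\by_i)}\quad\forall \hat f\in P_p(\widehat K),
\]
and then set $\by_i\coloneqq F_K(\hat\by_i)$. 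The constant then depends only on $p$ and $d$, and not on $K$, $h$ or shape regularity.

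On $\widehat K$ we choose a unisolvent set of $n_p=\dim P_p(\widehat K)$ points, for instance the principal lattice $\{\hat\by: \hat\by = \boldsymbol{\alpha}/p,\ \boldsymbol{\alpha}\in\mathbb{N}_0^d,\ |\boldsymbol{\alpha}|\le p\}$. Unisolvence of this lattice for $P_p$ is classical: it is the degree-$p$ Lagrange element. Hence the functional $\hat f\mapsto\max_i\absv{\hat f(\hat\by_i)}$ is a norm on the finite-dimensional space $P_p(\widehat K)$. Indeed, it vanishes only if $\hat f$ vanishes at every node, which by unisolvence forces $\hat f=0$. Norm equivalence on finite-dimensional spaces then gives the desired constant.

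The constant can also be written explicitly. Let $\{\hat\psi_i\}$ be the Lagrange basis associated with the nodes. Then $\hat f=\sum_i \hat f(\hat\by_i)\hat\psi_i$, which gives
\[
\norm{\hat f}_{L^\infty(\widehat K)}\le\Big(\max_{\hat\bx\in\widehat K}\sum_i\absv{\hat\psi_i(\hat\bx)}\Big)\max_i\absv{\hat f(\hat\by_i)},
\]
so $C_{\mathrm{eq}}$ is the Lebesgue constant of the nodal set, which depends only on $p$ and $d$.

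The only step needing care is unisolvence of the chosen nodes. Citing the standard Lagrange finite element settles it; alternatively, one can take the nodes $\{\bx_i^K\}$ already used to define $\mathcal{I}_h$, provided they come from a unisolvent reference set.
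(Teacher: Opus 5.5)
Your proof is correct, but it takes a different route from the paper.

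The paper simply cites \cite[Equation (1)]{bos2018fekete}. There the points $\by_i$ are taken to be Fekete points of $\overline{K}$ for $P_p$, i.e.\ maximisers of the absolute Vandermonde determinant. For such points Cramer's rule gives $\absv{\ell_i}\le 1$ on $\overline{K}$ for every Lagrange basis function $\ell_i$. Hence the Lebesgue constant is at most $n_p$, which gives the explicit value $C_{\mathrm{eq}}=\binom{p+d}{d}$. This bound holds for Fekete points of any compact set on which $P_p$ is determining. It therefore applies directly on $K$, with no reference-element transport and no unisolvence to check.

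Your argument instead pulls back to $\widehat K$, uses unisolvence of the principal lattice and finite-dimensional norm equivalence, and identifies $C_{\mathrm{eq}}$ with the Lebesgue constant of the equispaced nodes. Your observation that sup norms and point values are invariant under $F_K$ is exactly what makes the constant independent of $K$, $h$ and shape regularity.

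The trade-off is as follows. Your route is more elementary and self-contained: it uses only the Lagrange-element facts already underlying $\mathcal{I}_h$, and it can even reuse the nodes $\bx_i^K$. The paper's route buys an explicit constant that grows only like $n_p$, whereas the equispaced Lebesgue constant grows exponentially in $p$, already for $d=1$. Since $p$ is fixed and $C_{\mathrm{eq}}$ is absorbed into generic constants in Steps~2--3 of the proof of Proposition~\ref{prop: Linfty-Linfty}, either choice suffices.

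One small edge case: for $p=0$ the lattice $\boldsymbol{\alpha}/p$ is undefined, but any single point gives $C_{\mathrm{eq}}=1$.
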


\begin{proof}
    See~\cite[Equation (1)]{bos2018fekete} where we can take $C_{\mathrm{eq}} = \binom{p+d}{d}$.
\end{proof}

\begin{lemma}\label{lem: basis-mass-bounds}
    Let $\Th$ be an affine, shape-regular, quasi-uniform, simplicial mesh. For each element $K \in \Th$, let $\{e_j^K\}$ be the associated nodal basis functions. Then we have the following estimates
    $$
    \norm{e_i^K}_{L^2(K)} \leq Ch^{d/2}, \quad \absv{(M_K^{-1})_{ij}} \leq Ch^{-d},
    $$
    where $(M_K)_{ij} \coloneqq (e_i^K, e_j^K)$ is the mass matrix on element $K$.
\end{lemma}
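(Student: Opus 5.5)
The plan is to reduce both bounds to a fixed reference simplex $\hat K$ through the affine map $F_K:\hat K\to K$, $F_K(\hat\bx)=B_K\hat\bx+\boldsymbol{b}_K$. Since the nodal basis is defined by nodal points, the nodal basis on $K$ is the pullback of a fixed reference nodal basis $\{\hat e_i\}$ on $\hat K$. That is, $e_i^K=\hat e_i\circ F_K^{-1}$, with $\{\hat e_i\}$ independent of $K$ and $h$; this holds provided the nodal points on $K$ are images of fixed reference nodes, which I will take as part of the setting. A change of variables gives
\[
(M_K)_{ij}=\absv{\det B_K}\,(\hat M)_{ij},\qquad (\hat M)_{ij}\coloneqq\int_{\hat K}\hat e_i\hat e_j\dd\hat\bx .
\]
So the whole argument comes down to controlling $\absv{\det B_K}$ and then using fixed reference quantities.

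For the first estimate, $\norm{e_i^K}_{L^2(K)}^2=\absv{\det B_K}\norm{\hat e_i}_{L^2(\hat K)}^2$. We have $\absv{\det B_K}=\mathrm{meas}(K)/\mathrm{meas}(\hat K)\le Ch_K^d\le Ch^d$. Since $\norm{\hat e_i}_{L^2(\hat K)}$ is a fixed constant depending only on $p$ and $d$, this gives $\norm{e_i^K}_{L^2(K)}\le Ch^{d/2}$.

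For the second estimate, the identity above gives $M_K^{-1}=\absv{\det B_K}^{-1}\hat M^{-1}$. The reference mass matrix $\hat M$ is a Gram matrix of linearly independent functions, hence symmetric positive definite. Its inverse is therefore a fixed matrix with entries bounded by $C(p,d)$. It remains to bound $\absv{\det B_K}^{-1}$ from above. This is the only place where the mesh assumptions enter. By shape regularity, $K$ contains a ball of radius $\ge c\,h_K$, so $\mathrm{meas}(K)\ge c h_K^d$. By quasi-uniformity \eqref{eq: quasi-uniform}, $h_K\ge Ch$. Hence $\absv{\det B_K}\ge c h^d$ and $\absv{(M_K^{-1})_{ij}}\le Ch^{-d}$, with constants depending only on $p$, $d$ and the shape-regularity and quasi-uniformity parameters.

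The only point needing care is this lower bound on $\mathrm{meas}(K)$, since it is the step that uses both shape regularity and quasi-uniformity. The rest is routine scaling, so I expect no real obstacle.
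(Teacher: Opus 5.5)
Your proposal is correct and follows essentially the same route as the paper: pull the nodal basis back to the fixed reference simplex, use the change-of-variables identity $(M_K)_{ij} = \absv{\det B_K}\,\widehat{M}_{ij}$, and bound $\absv{\det B_K}$ above and below by multiples of $h^d$ via shape regularity and quasi-uniformity. The only difference is that the paper cites Ern--Guermond's Lemma 11.1 for the determinant bounds, whereas you derive them directly from the measure of $K$ and the inscribed-ball radius.
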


\begin{proof}
    Let $\widehat{K}$ be the reference element with fixed basis $\{\widehat{e}_i\}$. Since $\Th$ is affine, there exists an affine, invertible map $T_K: \widehat{K} \rightarrow K$ such that $T_K(\widehat{\boldsymbol{x}}) = \boldsymbol{J}_K \widehat{\boldsymbol{x}} + \boldsymbol{b}_K$ for some Jacobian matrix $\boldsymbol{J}_K$ and vector $\boldsymbol{b}_K$. The nodal basis functions are defined via $e^K_i(\bx) = \widehat{e}_i(T_K^{-1}(\bx))$ (see~\cite[Equation 9.5]{ern2021finite}). By a change of variables, and noting that $\boldsymbol{J}_K$ is constant over $\widehat{K}$~\cite[Section 8.1]{ern2021finite}, it follows that
    $$
    \norm{e_i^K}_{L^2(K)} = \absv{\mathrm{det}\boldsymbol{J}_K}^{1/2}\norm{\widehat{e}_i}_{L^2(\widehat{K})}.
    $$
    Using~\cite[Lemma 11.1]{ern2021finite} and the quasi-uniformity of the mesh to estimate $\absv{\mathrm{det}\boldsymbol{J}_K}$, it follows that
    $$
    \norm{e_i^K}_{L^2(K)} \leq Ch^{d/2}\norm{\widehat{e}_i}_{L^2(\widehat{K})}.
    $$
    Finally, since the reference basis functions are fixed, they are uniformly bounded independently of $h$ so their $L^2(\widehat{K})$ norm can be absorbed into the constant giving the first claim. Next, let $\widehat{M}$ denote the mass matrix associated with the reference basis. That is, $\widehat{M}_{ij}\coloneqq (\widehat{e}_i,\widehat{e}_j)_{L^2(\widehat K)}$. 
    Again, using a change of variables it follows that
    $$
    (M_K)_{ij} = \absv{\mathrm{det} \boldsymbol{J}_K} \widehat{M}_{ij}.
    $$
    The reference mass matrix is invertible hence
    $$
    (M_K^{-1})_{ij} = \absv{\mathrm{det} \boldsymbol{J}_K}^{-1} (\widehat{M}^{-1})_{ij}.
    $$
    Again, using~\cite[Lemma 11.1]{ern2021finite} to estimate $\absv{\mathrm{det}\boldsymbol{J}_K}$, it follows that
    $$
    \absv{(M_K^{-1})_{ij}} \leq Ch^{-d} \absv{(\widehat{M}^{-1})_{ij}},
    $$
    and then the result follows since $\widehat{M}^{-1}$ is fixed independent of $h$ so can be absorbed into the constant.
\end{proof}

\begin{lemma}\label{lem: sip-bound-technical}
    Let $a_h(\cdot, \cdot)$ be the SIP bilinear form. For each element $K$, we consider a set of nodal points $\{\bx_i^K\}$ and the associated nodal basis functions $\{e_j^K\}$ such that $e_j^{K'}(\bx_i^K) = \delta_{ij}\delta_{KK'}$. We then have the following estimate
    \begin{equation}
        \sum_{K \in \Th}\sum_{\ell=1}^{n_p} \absv{a_h(v_h, e_\ell^K)} \leq Ch^{-2} \sum_{K \in \Th}\sum_{\ell=1}^{n_p} \absv{(v_h, e_\ell^K)},
    \end{equation}
    for any $v_h \in V_p$.
\end{lemma}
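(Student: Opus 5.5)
The plan is to reduce the estimate to an elementwise statement on the reference element and then sum over the mesh. The key object is the coefficient vector of $v_h$ in the nodal basis. Since $V_p$ is discontinuous, the global mass matrix is block diagonal with blocks $M_K$. Write $v_h|_K = \sum_j c_j^K e_j^K$ and set $b_\ell^K \coloneqq (v_h, e_\ell^K)$. Then $c^K = M_K^{-1} b^K$, and Lemma~\ref{lem: basis-mass-bounds} gives
\begin{equation*}
\absv{c_j^K} \leq C h^{-d} \sum_{\ell} \absv{b_\ell^K}.
\end{equation*}
Thus it suffices to prove
\begin{equation*}
\sum_{K,\ell} \absv{a_h(v_h, e_\ell^K)} \leq C h^{d-2} \sum_{K, j} \absv{c_j^K}.
\end{equation*}
Combining this with the coefficient bound gives the claim.

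For this reduced bound, I will expand by linearity:
\begin{equation*}
a_h(v_h, e_\ell^K) = \sum_{K'} \sum_j c_j^{K'} \, a_h(e_j^{K'}, e_\ell^K).
\end{equation*}
The entry $a_h(e_j^{K'}, e_\ell^K)$ vanishes unless $K' = K$ or $K'$ shares a facet with $K$. For these local entries I will show $\absv{a_h(e_j^{K'}, e_\ell^K)} \leq C h^{d-2}$, treating each of the three parts of $a_h$ by scaling.
\begin{itemize}
\item The volume term satisfies $\norm{\nabla e_j^K}_{L^2(K)} \leq C h^{-1} h^{d/2}$ by a change of variables to $\widehat K$ (the same argument as Lemma~\ref{lem: basis-mass-bounds}, now with $\norm{\boldsymbol{J}_K^{-1}} \lesssim h^{-1}$ from shape regularity). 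Its contribution is therefore $\lesssim h^{d-2}$.
\item The consistency facet terms are bounded by $\norm{\nabla e}_{L^2(F)} \norm{e}_{L^2(F)}$. The discrete trace inequality gives $\norm{\nabla e}_{L^2(F)} \lesssim h^{-1/2} h^{-1} h^{d/2}$ and $\norm{e}_{L^2(F)} \lesssim h^{-1/2} h^{d/2}$, again yielding $h^{d-2}$.
\item The penalty term is $\eta h_F^{-1} \norm{e}_{L^2(F)}^2 \lesssim h^{-1} h^{d-1} = h^{d-2}$.
\end{itemize}
Throughout, quasi-uniformity and facet non-degeneracy let me replace $h_K$ and $h_F$ by $h$.

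Finally I will swap the order of summation. Each pair $(K', j)$ interacts only with elements $K$ in its facet patch. There are at most $d+2$ such elements (the element itself plus its $d+1$ neighbours), and each carries $n_p$ basis functions. Hence
\begin{equation*}
\sum_{K,\ell} \absv{a_h(v_h, e_\ell^K)} \leq (d+2)\, n_p\, C h^{d-2} \sum_{K', j} \absv{c_j^{K'}}.
\end{equation*}
Inserting $\absv{c_j^{K'}} \leq C h^{-d} \sum_\ell \absv{b_\ell^{K'}}$ then gives the constant $C n_p h^{-2}$, which is the stated bound.

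I expect the main obstacle to be the uniform local bound on $a_h(e_j^{K'}, e_\ell^K)$ for \emph{neighbouring} elements. This requires the trace and inverse scalings on both sides of a facet, uniformly over all elements, which rests on the shape-regularity and facet non-degeneracy assumptions. Everything else is bookkeeping.
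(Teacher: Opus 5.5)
Your proposal is correct and follows essentially the same route as the paper. Both arguments rest on the entrywise bound $\absv{(M_K^{-1})_{ij}} \leq Ch^{-d}$ from Lemma~\ref{lem: basis-mass-bounds}, combined with local trace/inverse scalings of $a_h$ on the facet patch of $K$. The only difference is bookkeeping: you bound individual stiffness entries $\absv{a_h(e_j^{K'}, e_\ell^K)} \lesssim h^{d-2}$ and then sum nodal coefficients, whereas the paper bounds $\absv{a_h(v_h, e_\ell^K)} \lesssim h^{-2}\norm{e_\ell^K}_{L^2(K)}\sum_{K'}\norm{v_h}_{L^2(K')}$ directly. Your explicit bounded-overlap count in the final summation also makes precise a step the paper leaves implicit.
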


\begin{proof}
    Fix an element $K \in \Th$ and some $\ell \in \{1, \ldots, n_p\}$. Using the Cauchy--Schwarz inequality, an $L^2$-$L^2$ discrete trace inequality~\cite[Lemma 12.8]{ern2021finite}, an $H^1$-$L^2$ inverse inequality~\cite[Lemma 12.1]{ern2021finite}, and the quasi-uniformity of the mesh it follows that
    \begin{align}\label{estimate_a_h}
    \absv{a_h(v_h, e_\ell^K)} \leq Ch^{-2} \norm{e_\ell^K}_{L^2(K)}\sum_{K' \in \mathcal{N}(K)}\norm{v_h}_{L^2(K')},
    \end{align}
    where $\mathcal{N}(K) \coloneqq \{K'\in \Th: \partial K \cap \partial K' \neq \emptyset\}$ is the set consisting of $K$ and its neighbouring elements. Using Lemma~\ref{lem: basis-mass-bounds} we know that 
    $$
    \norm{e_\ell^K}_{L^2(K)} \leq Ch^{d/2},
    $$
    for all $K \in \Th$ and $\ell \in \{1, \ldots, n_p\}$. Now, since $v_h \in V_p$, its restriction to element $K$ admits the following expansion:
    $$
    v_h|_K = \sum_{\ell=1}^{n_p} c_\ell e_\ell^K,
    $$
    for some coefficients $c_\ell$. Taking norms and using the bound on the basis functions implies that
    $$
    \norm{v_h}_{L^2(K)} \leq Ch^{d/2} \sum_{\ell=1}^{n_p} \absv{c_\ell}.
    $$
    Additionally, we can test the expansion of $v_h|_K$ with each basis vector on the element to get that
    $$
    (v_h, e_j^K)_{L^2(K)} = \sum_{\ell=1}^{n_p} c_\ell (e_\ell^K, e_j^K),
    $$
    which upon defining the element-local mass matrix $(M_K)_{j\ell} \coloneqq (e_\ell^K, e_j^K)_{L^2(K)}$ can be rewritten as
    $$
    (v_h, e_j^K)_{L^2(K)} = \sum_{\ell=1}^{n_p} (M_K)_{j \ell} c_\ell.
    $$
    Inverting this relationship gives
    $$
    c_j = \sum_{\ell=1}^{n_p} (M_K^{-1})_{j\ell} (v_h, e_\ell^K)_{L^2(K)}.
    $$
    Using Lemma~\ref{lem: basis-mass-bounds} we know that
    $$
    \absv{(M_K^{-1})_{j\ell}} \leq Ch^{-d}.
    $$
    It follows that
    $$
    \absv{c_\ell} \leq Ch^{-d}\sum_{j=1}^{n_p}\absv{(v_h, e_j^K)_{L^2(K)}},
    $$
    so that
    $$
    \norm{v_h}_{L^2(K)} \leq Ch^{-d/2}n_p\sum_{j=1}^{n_p}\absv{(v_h, e_j^K)_{L^2(K)}}.
    $$
    Plugging this into \eqref{estimate_a_h} gives
    $$
    \absv{a_h(v_h, e_\ell^K)} \leq Ch^{-2} n_p \sum_{K' \in \mathcal{N}(K)}\sum_{j=1}^{n_p}\absv{(v_h, e_j^{K'})_{L^2(K')}},
    $$
    where the $h^{-d/2}$ is cancelled by the $h^{d/2}$ estimate on $\norm{e_\ell^K}_{L^2(K)}$ (Lemma~\ref{lem: basis-mass-bounds}). Summing over each $\ell \in \{1, \ldots, n_p\}$ and $K \in \Th$ and absorbing constants gives the result.
\end{proof}

We can now prove Proposition~\ref{prop: Linfty-Linfty}. Steps 1 and 3 require significant changes from~\cite[Proposition 13]{cornalba2023dean} so we spell them out in full. The remaining steps do not depend on the choice of spatial discretisation and follow the same argument as in~\cite{cornalba2023dean} with only minor changes to the constants. We therefore only state the modifications required in the present setting.

\begin{proof}[Proof of Proposition~\ref{prop: Linfty-Linfty}]
       \textbf{Step 1: energy estimates for test functions.} Fix a point $\bx_0$ in the interior of some element $K$. Define the final datum $\varphi_h^{\bx_0}(\cdot) \in V_p$ by
    $$
    (\varphi_h^{\bx_0}, v_h) = v_h(\bx_0), \quad \forall v_h \in V_p.
    $$
    Choosing $v_h = \varphi_h^{\bx_0}$ gives
    $$
    \norm{\varphi_h^{\bx_0}}^2 = \varphi_h^{\bx_0}(\bx_0) \leq \norm{\varphi_h^{\bx_0}}_{L^\infty(K)} \leq Ch_K^{-d/2}\norm{\varphi_h^{\bx_0}}_{L^2(K)} \leq Ch^{-d/2}\norm{\varphi_h^{\bx_0}},
    $$
    where we have used an $L^\infty$-$L^2$ inverse inequality~\cite[Lemma 12.1]{ern2021finite} and the quasi-uniformity of the mesh. It follows that
    $$
    \norm{\varphi_h^{\bx_0}} \leq Ch^{-d/2}.
    $$
    We now evolve $\varphi_h^{\bx_0}$ by the SIP adjoint heat flow \eqref{eq: SIP-adjoint-heat}. That is, we set $\phi_h(T, \cdot) = \varphi_h^{\bx_0}$ and consider
    $$
    (\partial_t \phi_h^t, v_h) = a_h(\phi_h^t, v_h).
    $$
    Choosing $v_h = \phi_h^t$ and integrating in time gives
    $$
    \norm{\varphi_h^{\bx_0}}^2 - \norm{\phi_h^0}^2 = 2\int_0^T a_h(\phi_h^t, \phi_h^t) \dd t.
    $$
    By Corollary~\ref{cor: coercivity-type-bound-discrete-gradient}
    $$
    \norm{\varphi_h^{\bx_0}}^2 \geq C \int_0^T \norm{\dgrad \phi_h^t}^2 \dd t.
    $$
    Combining this with our upper bound for $\norm{\varphi_h^{\bx_0}}$ gives
    $$
    \int_0^T \norm{\dgrad \phi_h^t}^2 \dd t \leq Ch^{-d},
    $$
    where $C$ depends on the shape regularity of the mesh and on $\eta$ but is independent of $h$. Now suppose the point $\bx_0$ falls on a facet $F \in \Fk$ for some element $K$. We then define the final datum $\varphi_h^{\bx_0}$ via
    $$
    (\varphi_h^{\bx_0}, v_h) = v_h|_K(\bx_0),
    $$
    where the right-hand side corresponds to evaluating the trace of $v_h$ taken from the interior of the element $K$ at the point $\bx_0$. Using a discrete trace inequality~\cite[Lemma 12.8]{ern2021finite}, we estimate
    $$
    \varphi_h^{\bx_0}(\bx_0) \leq \norm{\varphi_h^{\bx_0}}_{L^\infty(F)} \leq Ch_K^{-d/2} \norm{\varphi_h^{\bx_0}}_{L^2(K)},
    $$
    and then the energy estimate follows via the same argument as before.

    \textbf{Step 2: exponentially decaying bounds for a chosen point $\bx_0$.} 
    Define the stopping time
    $$
    T_s \coloneqq \inf \cbrack{t > 0~:~ \sup_{\bx \in \btd}\absv{\rho_h - \E{\rho_h}}(t, \bx) \geq B \frac{\rho_{m, \delta}}{2}},
    $$
    for some arbitrary but fixed $B \geq 1$. Upon replacing $B \rhomin/8$ with $B \rho_{m, \delta}/(8 C_{\mathrm{eq}})$, the following estimate is then identical to that of~\cite[Proposition 13, Step 2]{cornalba2023dean}
    \begin{equation}\label{eq: step-2-final-result}
        \mathbb{P}\brack{T \leq T_s \text{ and } \absv{\rho_h - \E{\rho_h}}(T, \bx_0) \geq B \frac{\rho_{m, \delta}}{8C_{\mathrm{eq}}}} \leq 2 \exp \brack{- \frac{\rho_{m, \delta} B^{1/2}N^{1/2}h^{d/2}}{C \rho_{M, \delta}^{1/2}}},
    \end{equation}
    where $C_{\mathrm{eq}}$ is the norm-equivalence constant in Lemma~\ref{lem: norm-equivalence}. Note that in the right-hand side we have absorbed it into the generic constant $C$.

    \textbf{Step 3: extending the previous estimate to finitely many time points in $[0, T \wedge T_s]$.} We now extend the previous estimate a.e. in space and to all points of the form $ih^\beta \leq T \wedge T_s$ for $i \in \mathbb{N}$. Consider the event $A$ defined by
    $$
    A = \cbrack{\norm{\rho_h(ih^\beta) - \E{\rho_h(ih^\beta)}}_{L^\infty} \geq B\frac{\rho_{m, \delta}}{8} \text{ for some } i \in \mathbb{N} \text{ with } ih^\beta \leq T \wedge T_s}.
    $$
    It follows that 
    $$
    A \subseteq \bigcup_{i \in \mathbb{N}}\bigcup_{K \in \Th}\bigcup_{j = 1}^{n_p} \cbrack{ih^\beta \leq T_s \text{ and } \absv{\rho_h - \E{\rho_h}}(ih^\beta, \by_j^K) \geq B \frac{\rho_{m, \delta}}{8 C_{\mathrm{eq}}}},
    $$
    where we have used Lemma~\ref{lem: norm-equivalence} to estimate the $L^\infty(K)$ norm using the finite set of points $\{\by_j^K\}_{j=1}^{n_p}$ up to a constant $C_{\mathrm{eq}}$. Using union bounds, we estimate
    $$
    \mathbb{P}(A) \leq \sum_{\substack{i \in \mathbb{N} \\ ih^\beta \leq T}}\sum_{K \in \Th}\sum_{j=1}^{n_p} \mathbb{P}\brack{ih^\beta \leq T_s \text{ and } \absv{\rho_h - \E{\rho_h}}(ih^\beta, \by_j^K) \geq B \frac{\rho_{m, \delta}}{8 C_{\mathrm{eq}}}}.
    $$
    We then use the estimate \eqref{eq: step-2-final-result} alongside the observation that there are of order $h^{-d}$ elements in our mesh to get
    \begin{multline}\label{eq: Linfty-Linfty-2}
        \mathbb{P}\brack{\norm{\rho_h(ih^\beta) - \E{\rho_h(ih^\beta)}}_{L^\infty} \geq B \frac{\rho_{m, \delta}}{8} \text{ for some } i \in \mathbb{N} \text{ with } ih^\beta \leq T \wedge T_s} \\
        \leq CTh^{-\beta - d} \exp \brack{- \frac{\rho_{m, \delta}B^{1/2}N^{1/2}h^{d/2}}{C \rho_{M, \delta}^{1/2}}}.
    \end{multline}

    \textbf{Step 4: extending the estimate to all times in $[0, T]$.} This is identical to~\cite[Proposition 13, Step 4]{cornalba2023dean}: upon replacing $\rhomin$ with $\rho_{m, \delta}$ and $\rhomax$ with $\rho_{M, \delta}$, one obtains
    \begin{equation}\label{eq: Linfty-Linfty-3}
        \mathbb{P}\brack{ih^\beta \leq T_s, \sup_{t \in [ih^\beta, (i+1)h^\beta]} \norm{\rho_h(t) - \rho_h(ih^\beta)}_{L^\infty} \geq B \frac{\rho_{m, \delta}}{10}} \leq C \exp \brack{-B^{1/4}h^{-\beta/8}},
    \end{equation}
    for $\beta \geq 6d+8$ and $h$ sufficiently small. We highlight that in this step one requires an $h^{-2}$ estimate on the discrete Laplacian. This is precisely where Lemma~\ref{lem: sip-bound-technical} is used. In the finite-difference setting an analogous estimate is used only with the nodal basis functions here replaced by the canonical basis functions associated with the grid.

    \textbf{Step 5: obtaining \eqref{eq: Linfty-Linfty-1}.} This is identical to~\cite[Proposition 13, Step 5]{cornalba2023dean}: combine \eqref{eq: Linfty-Linfty-2} and \eqref{eq: Linfty-Linfty-3} to get
    \begin{multline*}
    \mathbb{P}\brack{\sup_{t \in [0, T]}\norm{\rho_h(t) - \E{\rho_h(t)}}_{L^\infty} \geq B \frac{\rho_{m, \delta}}{4}} \\ \leq CTh^{-\beta - d} \exp \brack{- \frac{\rho_{m, \delta}B^{1/2}N^{1/2}h^{d/2}}{C \rho_{M, \delta}^{1/2}}} + C \exp \brack{-cB^{1/4}h^{-1}}.
    \end{multline*}
    Upon choosing $h \geq C(d, \delta, \rhomin, \rhomax)N^{-1/d}\absv{\log N}^{2/d}(1+T)$, this implies \eqref{eq: Linfty-Linfty-1}.

    \textbf{Step 6:} 
    This is identical to~\cite[Proposition 13, Step 6]{cornalba2023dean}: upon replacing $\rhomin$ with $\rho_{m, \delta}$ and $\rhomax$ with $\rho_{M, \delta}$ one obtains \eqref{eq: Linfty-Linfty-4} and \eqref{eq: Linfty-Linfty-5}.
\end{proof}

\subsection{Recursive formula for higher moments}

We provide the proof of Proposition~\ref{prop: recursive-formula}. It shares many similarities with~\cite[Proposition 11]{cornalba2023dean} except we frequently use an $L^\infty$-$L^1$ H\"older inequality for the reasons explained in Section~\ref{subsec: proof-outline}.

\begin{proof}[Proof of Proposition~\ref{prop: recursive-formula}]
    Using Lemma~\ref{lem: big-moments-lemma} gives
    \begin{multline*}
    \dd \E{\mathcal{S}_N^{\bj}(\mathcal{I}_h \boldsymbol{\varphi}, T, t)} \\
    = \frac{1}{N}\E{\sum_{k, \ell=1}^M \frac{(j_k - \delta_{k\ell})j_\ell}{2}\mathcal{S}_N^{\bj^{k\ell}}(\mathcal{I}_h \boldsymbol{\varphi}, T, t)(\rho_h^+(t), \dgrad \phi_{k, h}^t \cdot \dgrad \phi_{\ell, h}^t)} \dd t.
    \end{multline*}
    Define
    $$
    r^t_{k, \ell, h} \coloneqq \dgrad\phi_{k, h}^t \cdot \dgrad \phi_{\ell, h}^t - \mathcal{I}_h(\nabla \phi_k^t \cdot \nabla \phi_\ell^t).
    $$
    Upon a similar addition and subtraction of terms as in Lemma~\ref{lem: second-moments}, we obtain
    \begin{multline*}
        \dd \E{\mathcal{S}_N^{\bj}(\mathcal{I}_h \bvphi, T, t)} 
        = N^{-1}\mathbb{E} \left [\sum_{k, \ell=1}^M \frac{(j_k - \delta_{k\ell})j_\ell}{2} \mathcal{S}_N^{\bj^{k\ell}}(\mathcal{I}_h \boldsymbol{\varphi}, T, t) \right . \\
        \left . \times \brack{(\rho_h(t), \mathcal{I}_h(\nabla \phi_k^t \cdot \nabla \phi_\ell^t)) - (\rho_h(0), \mathcal{P}_h^t(\mathcal{I}_h(\nabla \phi_k^t \cdot \nabla \phi_\ell^t)))} \right ] \dd t \\
        + N^{-1} \E{\sum_{k, \ell=1}^M \frac{(j_k - \delta_{k\ell})j_\ell}{2} \mathcal{S}_N^{\bj^{k\ell}}(\mathcal{I}_h \boldsymbol{\varphi}, T, t) (\rho_h(0), \mathcal{P}_h^t(\mathcal{I}_h(\nabla \phi_k^t \cdot \nabla \phi_\ell^t)) ) } \dd t \\
        + N^{-1} \E{ \sum_{k, \ell=1}^M \frac{(j_k - \delta_{k\ell})j_\ell}{2} \mathcal{S}_N^{\bj^{k\ell}}(\mathcal{I}_h \boldsymbol{\varphi}, T, t) (\rho_h^-(t), \dgrad \phi_{k, h}^t \cdot \dgrad \phi_{\ell, h}^t) } \dd t \\
        + N^{-1} \E{\sum_{k, \ell=1}^M \frac{(j_k - \delta_{k\ell})j_\ell}{2} \mathcal{S}_N^{\bj^{k\ell}}(\mathcal{I}_h \boldsymbol{\varphi}, T, t) (\rho_h(t), r_{k, \ell, h}^t) } \dd t =: \sum_{i=1}^4 A_i \dd t.
    \end{multline*}
    Again, using Lemma~\ref{lem: big-moments-lemma} and mimicking the addition and subtraction performed in Lemma~\ref{lem: second-moments} we obtain
    \begin{multline*}
        \dd \E{\mathcal{T}_N^{\bj}(\bvphi, T, t)} = N^{-1} \mathbb{E} \left [\sum_{k, \ell=1}^M \frac{(j_k - \delta_{k\ell})j_\ell}{2} \mathcal{T}_N^{\bj^{k\ell}}(\boldsymbol{\varphi}, T, t) \right . \\
        \left . \times \brack{\frac{1}{N}\sum_{r=1}^N \nabla \phi_k^t(\bB_r(t)) \cdot \nabla \phi_\ell^t(\bB_r(t)) - \frac{1}{N}\sum_{r=1}^N \mathcal{P}^t(\nabla \phi_k^t \cdot \nabla \phi_\ell^t)(\bB_r(0)) } \right ] \dd t \\
        + N^{-1} \E{\sum_{k, \ell=1}^M \frac{(j_k - \delta_{k\ell})j_\ell}{2} \mathcal{T}_N^{\bj^{k\ell}}(\boldsymbol{\varphi}, T, t) \brack{\frac{1}{N}\sum_{r=1}^N \mathcal{P}^t(\nabla \phi_k^t \cdot \nabla \phi_\ell^t)(\bB_r(0))}} \dd t \\
        =: \sum_{i=1}^2 B_i \dd t.
    \end{multline*}
    First notice that we can estimate 
    \begin{multline*}
    \absv{A_1 - B_1} \dd t \leq N^{-1} \sum_{k, \ell=1}^M \frac{(j_k - \delta_{k\ell})j_\ell}{2}\left \lvert \E{\mathcal{S}_N^{\bj^{k\ell}}(\mathcal{I}_h \boldsymbol{\varphi}, T, t)\mathcal{S}_N(\mathcal{I}_h(\nabla \phi_k^t \cdot \nabla \phi_\ell^t), T, t)} \right . \\
    \left .- \E{\mathcal{T}_N^{\bj^{k\ell}}(\boldsymbol{\varphi}, T, t)\mathcal{T}_N(\nabla \phi_k^t \cdot \nabla \phi_\ell^t, T, t)}\right \rvert \dd t,
    \end{multline*}
    where for each pair $(k, \ell)$ the exponent vector $\bj$ is decreased by two units to $\bj^{k \ell}$, but we pick up an additional test function $\nabla \phi_k^t \cdot \nabla \phi_\ell^t$. In particular, $\absv{A_1 - B_1}$ can be estimated in terms of moments of order $j-1$ as follows
    $$
    \int_0^T \absv{A_1 - B_1} \dd t \leq N^{-1} \sum_{k, \ell=1}^M \frac{(j_k - \delta_{k\ell})j_\ell}{2} \int_0^T \mathcal{D}\brack{\{\bj^{k \ell}; 1\}, \{\boldsymbol{\phi}^t; \nabla \phi_k^t \cdot \nabla \phi_\ell^t\}, t} \dd t.
    $$
    To bound $A_3$ we integrate in time, use H\"older's inequality in space, Cauchy--Schwarz in expectation, Corollary~\ref{cor: sup-moments}, Proposition~\ref{prop: Linfty-Linfty}, and Corollary~\ref{cor: discrete-gradient estimates} to get
    \begin{align*}
        &\int_0^T \absv{A_3} \dd t \leq CN^{-1} \int_0^T \sum_{k, \ell=1}^M \frac{(j_k - \delta_{k\ell})j_\ell}{2} \norm{\dgrad \phi_{k, h}^t} \norm{\dgrad \phi_{\ell, h}^t} \\
        &\quad\quad\times \E{\absv{\mathcal{S}_N^{\bj^{k\ell}}(\mathcal{I}_h \boldsymbol{\varphi}, T, t)}^2}^{1/2}\E{\norm{\rho_h^-(t)}_{L^\infty}^2}^{1/2} \dd t \\
        &\leq CN^{-1} T \mathcal{E}(N, h) \sum_{k, \ell=1}^M \frac{(j_k - \delta_{k\ell})j_\ell}{2} \sup_{t \in [0, T]}\norm{\dgrad \phi_{k, h}^t} \sup_{t \in [0, T]}\norm{\dgrad \phi_{\ell, h}^t} \\ 
        &\quad\quad\times (2N^{-1}TC(d, \delta, \rhomin, \rhomax))^{j/2-1}(2j-4)^{3(j-2)} \prod_{m=1}^M \norm{\varphi_m}_{H^{p+2}}^{j_m - \delta_{km} - \delta_{\ell m}} \\
        &\leq \brack{CN^{-1}T C(d, \delta, \rhomin, \rhomax)}^{\frac j 2}(2j)^{3(j-2)} \mathcal{E}(N, h) \sum_{k, \ell=1}^M \frac{(j_k - \delta_{k\ell})j_\ell}{2}\prod_{m=1}^M \norm{\varphi_m}_{H^{p+2}}^{j_m}.
    \end{align*}
    Similarly, using the H\"older and Cauchy--Schwarz inequalities, Corollary~\ref{cor: sup-moments} and Proposition~\ref{prop: Linfty-Linfty} we can estimate $A_4$ as follows
    \begin{align*}
        &\int_0^T \absv{A_4} \dd t \leq N^{-1} \sum_{k, \ell=1}^M \frac{(j_k - \delta_{k\ell})j_\ell}{2} \int_0^T \E{\absv{\mathcal{S}_N^{\bj^{k\ell}}(\mathcal{I}_h \boldsymbol{\varphi}, T, t)} \norm{\rho_h(t)}_{L^\infty}\norm{r^t_{k, \ell, h}}_{L^1}} \dd t \\
        &\leq N^{-1} \sum_{k, \ell=1}^M \frac{(j_k - \delta_{k\ell})j_\ell}{2} \sup_{t \in [0, T]} \norm{r^t_{k, \ell, h}}_{L^1} \\
        &\quad\quad\times\int_0^T \E{\absv{\mathcal{S}_N^{\bj^{k\ell}}(\mathcal{I}_h \boldsymbol{\varphi}, T, t)}^2}^{1/2} \E{\norm{\rho_h(t)}_{L^\infty}^2}^{1/2} \dd t \\
        &\leq CN^{-1} T C(d, \delta, \rhomin, \rhomax) h^p \sum_{k, \ell=1}^M \frac{(j_k - \delta_{k\ell})j_\ell}{2}\norm{\varphi_k}_{H^{p+2}}\norm{\varphi_\ell}_{H^{p+2}} \\
        &\quad\quad\times (2N^{-1}TC(d, \delta, \rhomin, \rhomax))^{j/2-1}(2j-4)^{3(j-2)} \prod_{m=1}^M \norm{\varphi_m}_{H^{p+2}}^{j_m - \delta_{km} - \delta_{\ell m}} \\
        &\leq h^p (C N^{-1}T C(d, \delta, \rhomin, \rhomax))^{j/2}(2j)^{3(j-2)} \sum_{k, \ell=1}^M \frac{(j_k - \delta_{k\ell})j_\ell}{2}\prod_{m=1}^M \norm{\varphi_m}_{H^{p+2}}^{j_m},
    \end{align*}
    where the estimate on $\lVert r_{k, \ell, h}^t\rVert_{L^1}$ is analogous to the bound on $\lVert r_h^t\rVert_{L^1}$ derived in the proof of Lemma~\ref{lem: second-moments}. Next, we rewrite the difference $A_2 - B_2$ as
    \begin{align*}
    (A_2 - B_2)& \dd t = N^{-1} \sum_{k, \ell=1}^M \frac{(j_k - \delta_{k\ell})j_\ell}{2} \brack{\E{\mathcal{S}_N^{\bj^{k\ell}}(\mathcal{I}_h \boldsymbol{\varphi}, T, t)} - \E{\mathcal{T}_N^{\bj^{k\ell}}( \boldsymbol{\varphi}, T, t)}} \\
    &\quad\quad\times (\rho_h(0), \mathcal{P}_h^t(\mathcal{I}_h(\nabla \phi_k^t \cdot \nabla \phi_\ell^t))) \dd t \\
    &-N^{-1}\sum_{k, \ell=1}^M \frac{(j_k - \delta_{k\ell})j_\ell}{2} \E{\mathcal{T}_N^{\bj^{k\ell}}( \boldsymbol{\varphi}, T, t)} \\
    &\quad\quad\times \brack{\frac{1}{N} \sum_{r=1}^N \mathcal{P}^t(\nabla \phi_k^t \cdot \nabla \phi_\ell^t)(\bB_r(0)) - (\rho_h(0), \mathcal{P}_h^t(\mathcal{I}_h(\nabla \phi_k^t \cdot \nabla\phi_\ell^t))) } \dd t \\
    &=: \Theta_1 + \Theta_2,
    \end{align*}
    where we have added and subtracted a suitable term in $\mathcal{T}_N(\boldsymbol{\varphi}, T, t)$ and used Assumption~\ref{ass: dG1} that the initial particle positions $\{\bB_r(0)\}$ and initial density $\rho_h(0)$ are deterministic. To deal with $\Theta_1$, we can use estimates of order $j-2$ since for each pair $(k, \ell)$, the exponent vector is decreased by two units to $\bj^{k\ell}$. Moreover, using $\absv{\rho_h(0)} \leq \rho_{M, \delta}$ from \eqref{eq: rho0h-two-sided}, we can estimate
    $$
    \absv{(\rho_h(0), \mathcal{P}_h^t(\mathcal{I}_h(\nabla \phi_k^t \cdot \nabla \phi_\ell^t)))} \leq \rho_{M, \delta} \norm{\varphi_k}_{H^{p+2}}\norm{\varphi_\ell}_{H^{p+2}}.
    $$
    Put together, we get
    $$
    \int_0^T \absv{\Theta_1} \dd t \leq N^{-1}\rho_{M, \delta} \sum_{k, \ell=1}^M \frac{(j_k - \delta_{k\ell})j_\ell}{2} \norm{\varphi_k}_{H^{p+2}}\norm{\varphi_\ell}_{H^{p+2}} \int_0^T \mathcal{D}\brack{\bj^{k\ell}, \boldsymbol{\phi}^t, t} \dd t.
    $$
    To handle $\Theta_2$, we first apply Corollary~\ref{cor: sup-moments} to get
    \begin{multline*}
    \int_0^T \absv{\Theta_2} \dd t \leq N^{-1} \sum_{k, \ell=1}^M \frac{(j_k - \delta_{k\ell})j_\ell}{2} (N^{-1}T)^{(j-2)/2}j^{j-2} \prod_{m=1}^M \norm{\nabla \varphi_m}_{L^\infty}^{j_m - \delta_{km} - \delta_{\ell m}} \\
    \times \int_0^T \absv{\frac{1}{N} \sum_{r=1}^N \mathcal{P}^t(\nabla \phi_k^t \cdot \nabla \phi_\ell^t)(\bB_r(0)) - (\rho_h(0), \mathcal{P}_h^t(\mathcal{I}_h(\nabla \phi_k^t \cdot \nabla \phi_\ell^t))) } \dd t.
    \end{multline*}
    We then add and subtract $(\rho_h(0), \mathcal{P}^t(\nabla \phi_k^t \cdot \nabla \phi_\ell^t))$ in the final bracket term and apply the triangle inequality to split it as
    \begin{multline*}
        \absv{\frac{1}{N} \sum_{r=1}^N \mathcal{P}^t(\nabla \phi_k^t \cdot \nabla \phi_\ell^t)(\bB_r(0)) - (\rho_h(0), \mathcal{P}_h^t(\mathcal{I}_h(\nabla \phi_k^t \cdot \nabla \phi_\ell^t))) } \\
        \leq \absv{\frac{1}{N} \sum_{r=1}^N \mathcal{P}^t(\nabla \phi_k^t \cdot \nabla \phi_\ell^t)(\bB_r(0)) - (\rho_h(0), \mathcal{P}^t(\nabla \phi_k^t \cdot \nabla \phi_\ell^t))} \\
        + \absv{(\rho_h(0), \mathcal{P}^t(\nabla \phi_k^t \cdot \nabla \phi_\ell^t)) - (\rho_h(0), \mathcal{P}_h^t(\mathcal{I}_h(\nabla \phi_k^t \cdot \nabla\phi_\ell^t)))}.
    \end{multline*}
    The first term is settled by Assumption~\ref{ass: dG1} and the stability of the (adjoint) heat semigroup. To estimate the second term we use Theorem~\ref{thm: SIP-parabolic-L2}. Altogether,
    \begin{multline*}
    \int_0^T \absv{\Theta_2} \dd t \\
    \leq h^{p} (CN^{-1}T)^{j/2}j^{j-2} \sum_{k, \ell=1}^M \frac{(j_k - \delta_{k\ell})j_\ell}{2} \prod_{m=1}^M \brack{\norm{\varphi_m}_{W^{p+2, \infty}}+\norm{\varphi_m}_{H^{p+3}}}^{j_m}.
    \end{multline*}
    The estimates for $\absv{\Theta_2}$ and $\absv{A_4}$ are then combined (up to constants) since they are both numerical errors of order $O(h^p)$.
\end{proof} 

\section{Construction of initial data}\label{app: construction-of-initial-data}

We show one possible construction of initial data $\rho_{0, h}$ and $\{\bB_i(0)\}_{i=1}^N$ satisfying Assumption~\ref{ass: dG1} given $\overline{\rho}_0 \in W^{p+1, \infty} \cap H^{p+2}$ such that $\int_{\btd}\overline{\rho}_0(\bx) \dd \bx = 1$, $\rhomin > 0$, and $\rhomax < \infty$. The construction is valid for $p=1, 2$.

\textbf{Step 1: constructing $\rho_{0, h}$.} Define 
$$
\rho_{0, h} \coloneqq \mathcal{I}_h \overline{\rho}_0 + c_h(\overline{\rho}_0), \quad c_h(\overline{\rho}_0) \coloneqq \frac{1}{\absv{\btd}}\int_{\btd}\brack{\overline{\rho}_0 - \mathcal{I}_h \overline{\rho}_0} \dd \bx.
$$
In particular, $\int \rho_{0, h} \dd \bx = \int \overline{\rho}_0 \dd \bx = 1$ so that the initial condition conserves mass. We next show that $\rho_{0, h}$ is positive and therefore a probability density.

\begin{lemma}
    There exists $h_0 > 0$ and $0 < \rho_* < \rho^* < \infty$ such that for $h \leq h_0$ we have $\rho_* < \rho_{0, h} < \rho^*$.
\end{lemma}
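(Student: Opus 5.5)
The bound follows by writing $\rho_{0,h}$ as a small perturbation of $\overline{\rho}_0$ in the $L^\infty$ norm. For every $\bx \in \btd$,
$$
\rho_{0,h}(\bx) = \overline{\rho}_0(\bx) + \brack{\mathcal{I}_h \overline{\rho}_0 - \overline{\rho}_0}(\bx) + c_h(\overline{\rho}_0),
$$
and hence
$$
\absv{\rho_{0,h}(\bx) - \overline{\rho}_0(\bx)} \leq \norm{\mathcal{I}_h\overline{\rho}_0 - \overline{\rho}_0}_{L^\infty} + \absv{c_h(\overline{\rho}_0)}.
$$
Interpreting $\rho_{0,h}(\bx)$ elementwise, i.e.\ using the trace from either neighbouring element on facets, is harmless. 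The task is therefore to show that both terms on the right are $O(h^{p+1})$ and then to absorb them by taking $h$ small.

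For the first term, invoke the standard $L^\infty$ interpolation estimate \cite[Theorem 11.13]{ern2021finite}, exactly as in the proof of Corollary~\ref{cor: ass-corollary}:
$$
\norm{\mathcal{I}_h\overline{\rho}_0 - \overline{\rho}_0}_{L^\infty} \leq C h^{p+1}\norm{\overline{\rho}_0}_{W^{p+1,\infty}}.
$$
This is finite by the assumed regularity $\overline{\rho}_0 \in W^{p+1,\infty}$. The second term is controlled by the first, since $\absv{c_h(\overline{\rho}_0)} \leq \absv{\btd}^{-1}\norm{\overline{\rho}_0 - \mathcal{I}_h\overline{\rho}_0}_{L^1} \leq \norm{\overline{\rho}_0 - \mathcal{I}_h\overline{\rho}_0}_{L^\infty}$. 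Altogether,
$$
\sup_{\bx}\absv{\rho_{0,h}(\bx) - \overline{\rho}_0(\bx)} \leq 2Ch^{p+1}\norm{\overline{\rho}_0}_{W^{p+1,\infty}}.
$$

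It remains to choose $h_0$ and the constants. Set $\rho_* \coloneqq \rhomin/2$ and $\rho^* \coloneqq 2\rhomax$, and choose $h_0$ so that $2Ch_0^{p+1}\norm{\overline{\rho}_0}_{W^{p+1,\infty}} < \rhomin/2$. Then for $h \leq h_0$,
$$
\rho_{0,h} > \rhomin - \rhomin/2 = \rho_* \quad\text{and}\quad \rho_{0,h} < \rhomax + \rhomin/2 \leq 2\rhomax = \rho^*,
$$
using $\rhomin \leq \rhomax$.

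The only real care point is the applicability of the $L^\infty$ interpolation estimate: $\mathcal{I}_h$ is defined elementwise via nodal values, so the estimate must be applied on each $K$ and the maximum taken over elements. This requires shape-regularity and affineness of the mesh, which are assumed; the resulting constant is then uniform in $h$. Note also that the lemma itself holds for any $p$; the restriction to $p \in \{1,2\}$ presumably matters only in later steps of the construction (for example, the particle approximation \eqref{eq: dG1-3b}).
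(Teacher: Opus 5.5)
Your argument is correct and is essentially the paper's proof: bound $\norm{\rho_{0,h}-\overline{\rho}_0}_{L^\infty}\le \norm{\overline{\rho}_0-\mathcal{I}_h\overline{\rho}_0}_{L^\infty}+\absv{c_h}\le 2\norm{\overline{\rho}_0-\mathcal{I}_h\overline{\rho}_0}_{L^\infty}\le Ch^{p+1}\norm{\overline{\rho}_0}_{W^{p+1,\infty}}$ using the $L^\infty$ interpolation estimate, then choose $h_0$ small with $\rho_*=\rhomin/2$. Your strict choice of $h_0$ and explicit $\rho^*=2\rhomax$ give the strict inequalities in the statement slightly more carefully than the paper's non-strict choice of $h_0$.
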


\begin{proof}
    We first note that 
    $$
    \norm{\overline{\rho}_0 - \rho_{0, h}}_{L^\infty} \leq \norm{\overline{\rho}_0 - \mathcal{I}_h \overline{\rho}_0}_{L^\infty} + \absv{c_h} \leq 2 \norm{\overline{\rho}_0 - \mathcal{I}_h \overline{\rho}_0}_{L^\infty} \leq Ch^{p+1}\norm{\overline{\rho}_0}_{W^{p+1, \infty}},
    $$
    using the standard interpolation estimate~\cite[Theorem 11.13]{ern2021finite}. For any $K \in \Th$ and $\bx \in K$, we decompose $\rho_{0, h}(\bx) = \overline{\rho}_0(\bx) - (\overline{\rho}_0(\bx) - \rho_{0, h}(\bx))$. We then estimate
    $$
    \rho_{0, h}(\bx) \geq \min_{\bx \in \btd} \overline{\rho}_0 - \norm{\overline{\rho}_0 - \rho_{0, h}}_{L^\infty(K)} \geq \rhomin - Ch^{p+1}\norm{\overline{\rho}_0}_{W^{p+1, \infty}}.
    $$
    One can then choose $h_0 > 0$ such that $Ch^{p+1}\norm{\overline{\rho}_0}_{W^{p+1, \infty}} \leq \frac{1}{2} \rhomin$. The lower bound then follows with $\rho_* \coloneqq \frac{1}{2} \rhomin$. The argument for $\rho^*$ is analogous.
\end{proof}

Next observe that $\norm{c_h} = \absv{c_h}\absv{\btd}^{1/2} \leq \norm{\overline{\rho}_0 - \mathcal{I}_h \overline{\rho}_0}$ so that another application of a standard interpolation estimate~\cite[Theorem 11.13]{ern2021finite}, this time in $L^2$, gives
$$
\norm{\rho_{0, h} - \overline{\rho}_0}_{L^2} \leq Ch^{p+1}\norm{\overline{\rho}_0}_{H^{p+1}}.
$$
We have now satisfied (1), (3a) and (3c) of Assumption~\ref{ass: dG1}. It remains to suitably place the deterministic particles to satisfy (2) and (3b).

\textbf{Step 2: placing the particles.} We first require the following Lemma:
\begin{lemma}
    Let $\overline{\rho}_0$ be a probability density with $0 < \rhomin \leq \overline{\rho}_0 \leq \rhomax < \infty$. For every $N \in \mathbb{N}$ there exists a partition of $\btd$ into axis-parallel rectangles $\{R_i\}_{i=1}^N$ such that
    $$
    \int_{R_i} \overline{\rho}_0(\bx) \dd \bx = \frac{1}{N}, \quad \mathrm{diam}(R_i) \leq CN^{-1/d},
    $$
    where $C$ depends only on $d$ and $\rhomax/\rhomin$, and in particular is independent of $N$.
\end{lemma}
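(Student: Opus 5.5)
We build the partition by recursive bisection along coordinate directions, a $k$-d-tree style construction adapted to the measure $\mu_0(\dd\bx)\coloneqq\overline{\rho}_0(\bx)\dd\bx$. We identify $\btd$ with the fundamental cube $Q_0=[0,L)^d$, so that axis-parallel rectangles in $Q_0$ are rectangles on the torus. Generalising the target, we aim to partition a rectangle $R$ with $\mu_0(R)=n/N$ into $n$ rectangles of mass $1/N$ each. We split $n=\lfloor n/2\rfloor+\lceil n/2\rceil$. We cut $R$ by a hyperplane orthogonal to its currently longest side, at the position where the left piece has mass exactly $\lfloor n/2\rfloor/N$. Such a position exists by the intermediate value theorem, since $s\mapsto\mu_0(R\cap\{x_i<s\})$ is continuous and strictly increasing; this uses $\overline{\rho}_0\geq\rhomin>0$ and $\overline{\rho}_0\in L^1$. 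We then recurse on both pieces until $n=1$. This yields exactly $N$ rectangles of mass $1/N$.

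The main obstacle is the diameter bound, which requires controlling the aspect ratio of the rectangles. The density bounds give, for a cut piece $R'$ of $R$ along side $i$, that the length fraction satisfies
\[
\frac{|R'|}{|R|}\in\Big[\tfrac{\rhomin}{\rhomax}\tfrac{m'}{n},\ \tfrac{\rhomax}{\rhomin}\tfrac{m'}{n}\Big],
\]
where $m'\in\{\lfloor n/2\rfloor,\lceil n/2\rceil\}$ and $m'/n\in[1/3,2/3]$ for $n\geq2$. Hence each cut shrinks the longest side by a factor of at most $1-c_*$, where $c_*\coloneqq\rhomin/(3\rhomax)$. Always cutting the longest side means that, by induction, the aspect ratio (longest over shortest side) stays bounded by $A\coloneqq\max\{1,3\rhomax/\rhomin\}$. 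Indeed, if the ratio is at most $A$ before a cut, the cut side becomes at least a fraction $c_*$ of the old longest side, which is itself at least the shortest side, so the new ratio is at most $\max\{A,1/c_*\}$. I expect to have to choose $A$ carefully, perhaps $A=3\rhomax/\rhomin+1$, to close this induction.

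With bounded aspect ratio, each final rectangle $R_i$ with longest side $\ell_i$ has volume at least $\ell_i^d A^{-(d-1)}$. Meanwhile $1/N=\mu_0(R_i)\geq\rhomin|R_i|$. Together these give
\[
\ell_i\leq\big(A^{d-1}/(\rhomin N)\big)^{1/d}.
\]
Since $\mu_0$ is a probability measure on $\btd$, we have $\rhomin\leq|\btd|^{-1}$ and $\rhomax\geq|\btd|^{-1}$. The constant therefore depends only on $d$, $\rhomax/\rhomin$ and the fixed torus size, and we obtain $\mathrm{diam}(R_i)\leq\sqrt d\,\ell_i\leq CN^{-1/d}$. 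The initial cube $Q_0$ has aspect ratio $1$, which starts the induction.
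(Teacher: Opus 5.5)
Your proof is correct, and it takes a genuinely different route from the paper.

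The paper inducts on the dimension. It cuts $\btd\cong[0,1]^d$ into $L=\lfloor N^{1/d}\rfloor$ slabs in $x_1$, with masses $n_\ell/N$ for integers $n_\ell\asymp N^{(d-1)/d}$ summing to $N$. It then applies the $(d-1)$-dimensional statement to the normalised marginals $\rho_\ell$, which take values in $[\rhomin/\rhomax,\rhomax/\rhomin]$. So the density ratio is squared at each level of the induction, and the integer counts $n_\ell$ have to be arranged by hand.

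You instead run a single $k$-d-tree recursion, with mass-balanced cuts across the longest side. You carry an aspect-ratio invariant and combine it with $|R_i|\le 1/(\rhomin N)$ to get the diameter bound. This needs neither marginal densities nor bookkeeping of the $n_\ell$. It also gives an explicit constant that is polynomial in the ratio, namely $C=\sqrt d\,(3\rhomax/\rhomin)^{(d-1)/d}(|\btd|\rhomax/\rhomin)^{1/d}$. The paper's construction yields a more grid-like, slab-by-slab partition, but its constant, if tracked, grows through the repeated squaring of the ratio.

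Two small tidy-ups:
\begin{itemize}
\item Your hedge about $A$ is unnecessary. After cutting the longest side $a_1$ at fraction $t\ge c_*$, the new longest side is at most $a_1$. The new shortest side is either the old shortest side, giving ratio at most $A$, or $ta_1\ge c_*a_1$, giving ratio at most $1/c_*$. Hence $A=3\rhomax/\rhomin$ is already invariant.
\item The inequality that actually removes $\rhomin$ from the constant is $\rhomax\ge|\btd|^{-1}$, which gives $\rhomin^{-1}\le|\btd|\,\rhomax/\rhomin$. The other inequality you quote, $\rhomin\le|\btd|^{-1}$, is not the one needed.
\end{itemize}
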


\begin{proof}
    Write $a \asymp b$ to mean the existence of constants $c, C > 0$ such that $cb \leq a \leq Cb$. Without loss of generality, identify $\btd$ with the periodic cell $[0, 1]^d$. We argue by induction over dimension. When $d=1$, choose points $0=s_0 < s_1 < \cdots < s_N = 1$ such that
    $$
    \int_{s_{i-1}}^{s_i} \overline{\rho}_0(\bx) \dd \bx = \frac{1}{N}, \quad i=1, \ldots, N.
    $$
    Setting $R_i = [s_{i-1}, s_i]$ gives the desired partition since 
    $$
    \mathrm{diam}(R_i) = \absv{R_i} \leq \rhomin^{-1}N^{-1}.
    $$
    Now assume the result holds in dimension $d-1$. Let $L = \lfloor N^{1/d}\rfloor$. Then $L \asymp N^{1/d}$ and we can choose integers $n_1, \ldots, n_L \asymp N^{(d-1)/d}$ such that $\sum_{\ell=1}^L n_\ell = N$. Take the first coordinate and partition it into slabs $S_\ell \coloneqq [s_{\ell-1}, s_\ell] \times [0, 1]^{d-1}$ for $0 = s_0 < s_1 < \ldots < s_L = 1$ such that $\int_{S_\ell} \overline{\rho}_0 \dd \bx = n_\ell / N$. Note that $s_{\ell} - s_{\ell-1} \asymp N^{-1/d}$. We then define
    $$
    \rho_\ell(\bx') \coloneqq \frac{N}{n_\ell}\int_{s_{\ell-1}}^{s_\ell}\overline{\rho}_0(x_1, \bx') \dd x_1,
    $$
    which are probability densities on $[0, 1]^{d-1}$. By the boundedness of $\overline{\rho}_0$, it follows that the densities $\rho_\ell$ are bounded from above and below (strictly away from zero, and independently of $N, \ell$). We therefore apply the inductive hypothesis to deduce the existence of rectangles $\{R_{\ell, j}'\}_{j=1}^{n_\ell}$ partitioning $[0, 1]^{d-1}$ such that
    $$
    \int_{R_{\ell, j}'} \rho_\ell(\bx') \dd \bx' = \frac{1}{n_\ell}, \quad \mathrm{diam}(R_{\ell, j}') \lesssim n_{\ell}^{-1/(d-1)} \lesssim N^{-1/d}.
    $$ Therefore, the rectangles $R_{\ell, j} \coloneqq [s_{\ell-1}, s_\ell] \times R_{\ell, j}'$ give the required partition.
\end{proof}

We now construct the initial Brownian particle placements.

\begin{lemma}\label{lem: appE-2}
    For any $N \in \mathbb{N}$, there exists a deterministic set of Brownian particles $\{\bB_i(0)\}_{i=1}^N$ such that
    $$
    \absv{\frac{1}{N}\sum_{i=1}^N v(\bB_i(0)) - \int_{\btd}v(\bx) \overline{\rho}_0(\bx) \dd \bx} \lesssim N^{-2/d}\norm{v}_{W^{2, \infty}},
    $$
    holds for all $v \in W^{2, \infty}$.
\end{lemma}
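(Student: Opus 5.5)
Place one particle in each cell of the partition from the previous lemma, at the $\overline{\rho}_0$-weighted barycentre
$$
\bB_i(0) \coloneqq \bx_i \coloneqq N\int_{R_i} \bx\, \overline{\rho}_0(\bx) \dd \bx .
$$
This is a convex combination of points of $R_i$, since $\int_{R_i}\overline{\rho}_0 = 1/N$. Because $R_i$ is an axis-parallel rectangle, it is convex, so $\bx_i \in R_i$. On the torus we work in the periodic cell $[0,1]^d$, where each $R_i$ is a genuine rectangle, so there is no wrap-around ambiguity.

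Using $\int_{R_i}\overline{\rho}_0 = 1/N$ and the fact that the cells partition $\btd$, write the error as a sum of local errors:
$$
\frac1N\sum_{i=1}^N v(\bx_i) - \int_{\btd} v\,\overline{\rho}_0 \dd\bx = \sum_{i=1}^N \int_{R_i} \brack{v(\bx_i) - v(\bx)}\overline{\rho}_0(\bx)\dd\bx .
$$
On each cell, Taylor expand $v$ about $\bx_i$ to second order:
$$
v(\bx) = v(\bx_i) + \nabla v(\bx_i)\cdot(\bx-\bx_i) + r_i(\bx), \qquad \absv{r_i(\bx)} \le \tfrac12 \norm{v}_{W^{2,\infty}} \absv{\bx-\bx_i}^2 .
$$
The remainder bound holds because $v \in W^{2,\infty}$ has Lipschitz gradient and the segment from $\bx_i$ to $\bx$ stays inside the convex cell $R_i$.

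The linear term vanishes exactly, since $\int_{R_i}(\bx-\bx_i)\overline{\rho}_0\dd\bx = 0$ by the choice of $\bx_i$. This is why we use the weighted barycentre rather than an arbitrary point of $R_i$: an arbitrary point would only give $O(N^{-1/d})$.

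For the quadratic remainder, $\absv{\bx-\bx_i} \le \mathrm{diam}(R_i) \le CN^{-1/d}$, so each cell contributes at most
$$
\tfrac12\norm{v}_{W^{2,\infty}} C^2 N^{-2/d}\int_{R_i}\overline{\rho}_0 = \tfrac12 C^2 N^{-2/d}\norm{v}_{W^{2,\infty}}\, N^{-1} .
$$
Summing over the $N$ cells gives $\lesssim N^{-2/d}\norm{v}_{W^{2,\infty}}$, with the constant depending only on $d$ and $\rhomax/\rhomin$ through the diameter bound of the previous lemma.

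The main obstacle is making sure the barycentre lies in $R_i$, so that the diameter bound controls $\absv{\bx-\bx_i}$ and the Taylor expansion is taken along segments inside the cell. Convexity of the rectangles handles both. If that failed for some reason, I would instead choose $\bx_i \in R_i$ arbitrarily and accept the weaker $O(N^{-1/d})$ rate.
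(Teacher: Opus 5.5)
Your proposal is correct and matches the paper's proof essentially step for step. You place each particle at the $\overline{\rho}_0$-weighted barycentre of $R_i$, cancel the linear Taylor term with the barycentre identity, and bound the quadratic remainder on each cell by $\mathrm{diam}(R_i)^2 N^{-1}\norm{v}_{W^{2,\infty}}$ before summing over the $N$ cells. Your explicit check that the barycentre lies in the convex, non-wrapping rectangle $R_i$ fills in a detail the paper leaves implicit, and it is exactly what justifies $\absv{\bx - \bB_i(0)} \le \mathrm{diam}(R_i)$.
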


\begin{proof}

For each $i = 1, \ldots, N$ we then place the initial Brownian motions according to
$$
\bB_i(0) \coloneqq N\int_{R_i} \bx \overline{\rho}_0(\bx) \dd \bx.
$$
That is, $\bB_i(0)$ is the $\overline{\rho}_0$--weighted barycentre of $R_i$. It follows that
$$
\int_{R_i} (\bx - \bB_i(0))\overline{\rho}_0(\bx) \dd \bx = 0.
$$
Next, we Taylor expand $v \in W^{2, \infty}$ around $\bB_i(0)$ to get
$$
v(\bx) = v(\bB_i(0)) + \nabla v(\bB_i(0)) \cdot (\bx - \bB_i(0)) + r_i(\bx),
$$
where the remainder $r_i(\bx)$ satisfies
$$
\absv{r_i(\bx)} \lesssim \absv{\bx - \bB_i(0)}^2 \norm{v}_{W^{2, \infty}}.  
$$
Multiplying by $\overline{\rho}_0(\bx)$ and integrating over $R_i$ gives the estimate
\begin{align*}
    \absv{\frac{1}{N} v(\bB_i(0)) - \int_{R_i}v(\bx) \overline{\rho}_0(\bx) \dd \bx} &\lesssim \norm{v}_{W^{2, \infty}} \int_{R_i} \absv{\bx - \bB_i(0)}^2\overline{\rho}_0(\bx) \dd \bx, \\
    &\lesssim \mathrm{diam}(R_i)^2 \norm{v}_{W^{2, \infty}} \int_{R_i} \overline{\rho}_0(\bx) \dd \bx, \\
    &\lesssim N^{-1-2/d}\norm{v}_{W^{2, \infty}}.
\end{align*}
Summing over $i = 1, \ldots, N$ gives the result.
\end{proof}

Finally, it remains to estimate the weak error appearing in Assumption~\ref{ass: dG1}. We write
$$
\absv{\frac{1}{N}\sum_{i=1}^N v(\bB_i(0)) - (\rho_{0, h}, \mathcal{I}_h v)} \leq \absv{\frac{1}{N}\sum_{i=1}^N v(\bB_i(0)) - (\overline{\rho}_0, v)} + \absv{(\overline{\rho}_0, v) - (\rho_{0, h}, \mathcal{I}_h v)}.
$$
The first term is settled using Lemma~\ref{lem: appE-2}, noting that $\norm{v}_{W^{2, \infty}} \leq \norm{v}_{W^{p+1, \infty}}$ since $p \geq 1$. For the second term we add and subtract $(\overline{\rho}_0, \mathcal{I}_h v)$. It is then settled using the standard interpolation estimate~\cite[Theorem 11.13]{ern2021finite} and the $L^\infty$ stability of $\mathcal{I}_h$ to give a bound of order $O(h^{p+1})$. Altogether
$$
\absv{\frac{1}{N}\sum_{i=1}^N v(\bB_i(0)) - (\rho_{0, h}, \mathcal{I}_h v)} \lesssim \brack{h^{p+1} + N^{-2/d}}\norm{v}_{W^{p+1, \infty}}.
$$
For $p \in \{1, 2\}$ and the scaling $Nh^d \gtrsim 1$ implied by Assumption~\ref{ass: dG2}, it follows that $h^{p+1} + N^{-2/d} \lesssim h^{p+1} + h^2 \lesssim h^{p}$, and the claim follows.
\begin{remark}[Higher polynomial degrees]\label{rem: higher-order-initial-data}
    The restriction to $p \in \{1,2\}$ is structural. A single particle in a cell
    of $\overline{\rho}_0$-mass $N^{-1}$ reproduces that mass and, via the
    barycentre, the first moment; the rule is exact on $P_1$ and no better,
    giving $O(N^{-2/d})$, which meets the $O(h^p)$ of
    Assumption~\ref{ass: dG1}(3b) only for $p \leq 2$.

    For general $p$ one places $q$ particles per cell so that the rule is exact
    on $P_p$. Since every particle carries mass $N^{-1}$, this requires an
    \emph{equal-weight} cubature rule for $\overline{\rho}_0 \dd \bx$ on the
    cell. Kane~\cite[Theorem 4]{kane2015small} guarantees such a rule for every
    $q$ above an explicit threshold determined by the dimension of the mean-zero
    subspace of $P_p$ and by the extremal ratio
    $\sup f / \absv{\inf f}$ over that subspace; both are bounded in terms of
    $p$, $d$ and $\max_{\btd}\overline{\rho}_0/\min_{\btd}\overline{\rho}_0$
    alone. Applied on a partition into $\lfloor N/q \rfloor$ cells of
    $\overline{\rho}_0$-mass $n_j/N$ with $q \leq n_j \leq 2q-1$, this gives
    $$
    \absv{\frac{1}{N}\sum_{i=1}^N v(\bB_i(0)) - \int_{\btd} v \overline{\rho}_0 \dd \bx}
    \lesssim N^{-(p+1)/d}\norm{v}_{W^{p+1,\infty}}
    \lesssim h^{p+1}\norm{v}_{W^{p+1,\infty}},
    $$
    so Assumption~\ref{ass: dG1}(3b) holds at every degree. We do not pursue
    this: Kane's argument is non-constructive, whereas the barycentres of
    Lemma~\ref{lem: appE-2} are explicit and cover the degrees used in our
    experiments.
\end{remark}

\end{document}